\newtheorem{thm}{Theorem}
\newtheorem{prop}[thm]{Proposition}
\newtheorem{lem}[thm]{Lemma}
\newtheorem{cor}[thm]{Corollary}
\newtheorem{defn}[thm]{Definition}
\newtheorem{rmk}[thm]{Remark}
\newtheorem{eg}[thm]{Example}
\newcommand{\bb}[1]{\mathbb{#1}}
\newcommand{\cl}[1]{\mathcal{#1}}
\numberwithin{algorithm}{section} 
\numberwithin{thm}{section}
\numberwithin{equation}{section}
\numberwithin{table}{section}
\begin{document}
\title[Equiangular Frames and Signature Sets]{Equiangular Frames and Signature Sets}
\author{Preeti Singh}

\address{Department of Mathematics \\ University of Houston \\ Houston, TX 77204-3008 \\USA}
\email{presingh@math.uh.edu}
\begin{abstract}
We will present a relation between real equiangular frames and certain special sets in groups which we call signature sets and show that many equiangular frames arise in this manner. Then we will define quasi-signature sets and will examine equiangular frames associated to these subsets of groups. We will extend these results to complex equiangular frames where the inner product between any pair of vectors is a common multiple of a cube root of unity and exhibit equiangular frames that arise from groups in this manner.
\end{abstract}
\maketitle
%%%%%%%%%%%%%%%%%%%%%%%%%%%%%%%%%%%%%%%%%%%%%%%%%%%%%%%%%%%%%%%%%%%%%%%%%%%%
%%%
\section {Introduction}
Equiangular tight frames play an important role in several areas of mathematics,
ranging from signal processing (see, e.g. \cite{port}, \cite{cas}, \cite{life1}, \cite{life2} and references therein) to
quantum computing.\\
The problem of the existence of equiangular frames is known to be equivalent
to the existence of a certain type of matrix called a Seidel matrix \cite{seidel} or signature
matrix \cite{holmes} with two eigenvalues. A matrix $Q$ is a Seidel matrix provided that
it is self-adjoint, its diagonal entries are $0$, and its off-diagonal entries are all of
modulus one. In the real case, these off-diagonal entries must all be $\pm 1$; such
matrices can then be interpreted as (Seidel) adjacency matrices of graphs. \\
In this paper we will use some basic facts in group theory to construct these signature matrices. We will look at some special subsets of groups which we will call signature sets and show that many equiangular frames arise from these sets. In Section \ref{sec2} we provide some necessary and sufficient conditions for the existence of the signature sets followed by some examples of equiangular frames arising from them.\\
 In \cite{kalra}, a relation between complex cyclic equiangular frames and difference sets was presented. In Section \ref{sec3}, we will establish a relation between signature sets and difference sets. We will see that a difference set having certain properties form a signature set for some special class of equiangular frames. \\
We will also study signature matrices $Q^{'}$ in the standard form where its diagonal entries are $0$, off-diagonal entries are all of modulus one and $Q_{i1}^{'}= Q_{1i}^{'}=1$. In Section \ref{sec4}, we will define quasi-signature sets and use these sets to construct signature matrices in the standard form and will show that many equiangular frames of the type $(2k,k)$ arise from quasi-signature sets.\\
Finally, we will extend our results in the case of real equiangular frames to the case when the entries of $Q$ are cube roots of unity. In \cite{cuberoots}, it was shown that the existence of such matrices is equivalent to the existence of certain highly regular directed graphs. In this paper we will use group theory and combinatorics to show the analogy of the results between this paper and \cite{cuberoots}.
%%%%%%%%%%%%%%%%%%%%%%%%%%%%%%%%%%%%%%%%%%%%%%%%%%%%%%%%%%%%%%%%%%%%%%%%%%%%%%%%%%%%%%%%%%%%%%%%%%%%%%%%%%%%%%%%%%%%%%%%%%%%%%%%%%%%%%%%%%%%%%%%%%%%%%%%%%%%%%%%%%%%%
\section{Definitions and Preliminaries}
 \subsection{Equiangular Tight Frames:} Let $\cl H$ be a real or complex Hilbert space. A finite family of vectors $\{f_1, . . . , f_n\}$ is called a frame provided that there exist
strictly positive real numbers $A$ and $B$ such that \begin{eqnarray}A\| x \| ^2 \leq 
\sum_{j=1}^n
|\langle x, f_j\rangle |^2 \leq B\| x \| ^2&& \text {for all $x\in \cl H$.}\end{eqnarray} 

A frame is said to be a tight frame if we can choose $A = B$. When $A = B = 1$,
then the frame is called a {\it normalized tight frame} or a {\it Parseval frame}. Replacing
$f_i$ by $f_i/\sqrt A$ always normalizes a tight frame.\\
One can show (see \cite{cas}, p. 21, for example) that a family $\{f_1, \dots , f_n\}$ is a tight
frame with constant $A$ if and only if
\begin{eqnarray}\label{1.1}x = \frac{1}{A}\sum_{i=1}^n\langle x,f_i\rangle f_i && \text{for all $x\in \cl H$.}\end{eqnarray} There is a natural equivalence relation for tight frames, motivated by simple operations on the frame vectors which preserve identity \eqref{1.1}.
$ $
We say that two tight frames $\{f_1, f_2,\dots,f_n\}$ and $\{g_1, g_2, \dots, g_n\}$ are \textit{unitarily equivalent} if there exists a unitary operator $U$ on $\cl H$ such that for all $i\in \{1, 2, \dots, n\}$, $g_i = Uf_i$. We say that they are \textit{switching equivalent} if there exist a unitary operator $U$ on $\cl H$, a permutation $\pi$ on $\{1, 2,\dots, n\}$ and a family of unimodular constants $\{\lambda_1, \lambda_2, \dots, \lambda_n\}$ such that for all $i \in \{1, 2, \dots, n\}$, $g_i = \lambda_iUf_{\pi_i}$. If $\cl H$ is a real Hilbert space, $U$ is understood to be orthogonal, and all $\lambda_i \in \{\pm 1\}$.\\
In this paper we shall be concerned only with Parseval frames for the $k$-dimensional complex Hilbert space $\bb C^k$, equipped with the canonical inner product. We use the term $(n, k)$-frame to mean a Parseval frame of $n$ vectors for $\bb C^k$. Every such Parseval frame gives rise to an isometric embedding of $\bb C^k$ into $\bb C^n$ via the map \begin{eqnarray*}V:\bb C^k\longrightarrow \bb C^n, & (V x)_j = \langle x, f_j\rangle, &\text{for all $j=\{1,2,\dots,n\}$}\end{eqnarray*} which is called the analysis operator of the frame. Because $V$ is linear, we may identify $V$ with an $n \times k$ matrix and the vectors $\{f_1,\dots, f_n\}$ are the respective columns of $V$. Conversely, given any $n \times k$ matrix $V$ that defines an isometry, if we let $\{f_1,\dots,f_n\}$ denote the columns of $V$, then this set is an $(n, k)$-frame and $V$ is the analysis operator of the frame. \\If $V$ is the analysis operator of an $(n, k)$-frame, then since $V$ is an isometry, we see that $V ^*V = I_k$ and the $n \times n$ matrix $V^*V$ is a self-adjoint projection of rank $k$. Note that $VV^*$ has entries $(VV^*)_{ij} = (\langle f_j , f_i\rangle)$. Thus, $VV^*$ is the Grammian matrix (or correlation matrix ) of the set of vectors. Conversely, any time we have an $n \times n$ self-adjoint projection $P$ of rank $k$, we can always factor it as $P = V V^*$ for some $n \times k$ matrix $V$. In this case we have $V^*V = I_k$ and hence $V$ is an isometry and the columns of $V$ are an $(n, k)$-frame. Moreover, if $P = WW^*$ is another factorization of $P$, then there exists a unitary $U$ such that $W^* = UV^*$, and the frame corresponding to $W$ differs from the frame corresponding to $V$ by applying the same unitary to all frame vectors, which is included in our equivalence relation. \\If there exists a unitary $U$ that is the product of a permutation and a diagonal unitary (orthogonal matrix in the real case) such that $UVV^{*}U^{*} = WW^{*}$, then the two frames corresponding to $V$ and $W$ are called {\it equivalent} as defined in \cite{holmes}.
%Thus, switching equivalence is coarser than just identifying all frames with the same Grammian. In  it is shown that it corresponds to identifying frames for which the Grammians can be obtained from each other by conjugation with diagonal unitaries and permutation matrices.

\begin{defn}An $(n, k)$-frame $\{f_1, \hdots,f_n\}$ is called uniform if there is a constant
$u > 0$ such that $\|f_i\| = u$ for all $i$. An $(n, k)$-frame is called equiangular if all of the frame vectors are non-zero and the angle between the lines generated by any pair
of frame vectors is a constant, that is, provided that there is a constant $b$ such
that $|\langle f_i/\|f_i\|, f_j/\|f_j\|\rangle| = b$ for all $i \neq j$.
\end{defn}
Many places in the literature define equiangular to mean that the $(n, k)$-frame
is uniform and that there is a constant $c$ so that $\langle f_i, f_j\rangle = c$ for all $i\neq j$. However, the assumption that the frame is uniform is not needed in our definition as the
following result shows.

\begin{prop}
Let $\{f_1, \hdots,f_n\}$ be a tight frame for $\bb C_k$. If all frame vectors
are non-zero and if there is a constant $b$ so that $|\langle f_i/\|f_i\|, f_j/\|f_j\|\rangle| = b$ for all
$i\neq j$, then $\|f_i\| = \|f_j\|$ for every $i$ and $j$.
\end{prop}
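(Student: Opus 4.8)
The plan is to apply the tight-frame reconstruction identity \eqref{1.1} to each frame vector in turn and then extract a single linear equation satisfied by the squared norms. Writing $A$ for the frame constant, I would first restate \eqref{1.1} as $Ax = \sum_{i=1}^n \langle x, f_i\rangle f_i$ for all $x\in\bb C^k$, then substitute $x = f_j$ to get $Af_j = \sum_{i=1}^n \langle f_j, f_i\rangle f_i$. Pairing both sides with $f_j$ and using $\langle f_j, f_i\rangle\langle f_i, f_j\rangle = |\langle f_i, f_j\rangle|^2$ yields the scalar identity
\[
A\|f_j\|^2 = \sum_{i=1}^n |\langle f_i, f_j\rangle|^2 .
\]

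Next I would separate the diagonal term $i=j$, which contributes $\|f_j\|^4$, and rewrite each off-diagonal term via the equiangularity hypothesis: from $|\langle f_i/\|f_i\|, f_j/\|f_j\|\rangle| = b$ we have $|\langle f_i, f_j\rangle|^2 = b^2\|f_i\|^2\|f_j\|^2$. Setting $t_i = \|f_i\|^2 > 0$ and $T = \sum_{i=1}^n t_i$, and dividing through by $t_j\neq 0$ (legitimate since all frame vectors are non-zero), the identity collapses to
\[
A = (1-b^2)\,t_j + b^2 T,
\]
valid for every index $j$.

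Since the right-hand side of the displayed equation does not depend on $j$, I would rearrange it to $(1-b^2)\,t_j = A - b^2 T$, a relation holding for all $j$ with a $j$-independent right-hand side. The main (and essentially only) subtlety is the factor $1-b^2$: provided $b\neq 1$ one may divide, obtaining $t_j = (A - b^2 T)/(1-b^2)$ independent of $j$, whence $\|f_j\|=\|f_i\|$ for all $i,j$, which is the desired conclusion. The point I expect to require care is the excluded value $b=1$, where this division fails; here Cauchy--Schwarz forces every pair $f_i, f_j$ to be linearly dependent, so the vectors span a one-dimensional subspace and cannot form a frame for $\bb C^k$ in the non-degenerate range $k\geq 2$. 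I would therefore note that $b=1$ does not arise in the cases of interest, so the linear relation above determines the squared norms uniquely and completes the argument.
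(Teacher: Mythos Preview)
Your argument is correct and is essentially the same as the paper's: the paper normalizes to a Parseval frame (so $A=1$ and $T=\sum_i \|f_i\|^2=k$), equates the $(i,i)$-entry of $P=P^2$, and arrives at the identical relation $1=(1-b^2)\|f_i\|^2+b^2k$, concluding that $\|f_i\|^2$ is independent of $i$. Your treatment of the edge case $b=1$ is in fact more explicit than the paper's, which silently assumes $b\neq 1$ when reading off the constant.
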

\begin{proof}
 Without loss of generality, we may assume that the frame is a Parseval
frame, so that $P = (\langle f_j , f_i\rangle)_{i,j=1}^n$
is a projection of rank $k$. Hence, $P = P^2$ and so
upon equating the $(i, i)$-th entry and using the fact that the trace of $P$ is $k$, we see
that $\|f_i\|^2 = \langle f_i, f_i\rangle = \sum_{j=1}^n
\langle f_j , f_i\rangle \langle f_i, f_j\rangle = \|f_i^4\| + \sum_{j\neq i}^n b^2\|f_i\|^2\|f_j\|^2 = \|f_i\|^4 + b^2\|f_i\|^2(k - \|f_i\|^2)$, which shows that $\|f_i\|^2$ is a (non-zero) constant independent
of $i$. \\
\end{proof}
In \cite{holmes}, a family of $(n, k)$-frames was introduced that was called $2$-uniform
frames. It was then proved that a Parseval frame is $2$-uniform if and only if
it is equiangular. Thus, these terminologies are interchangeable in the literature,
but the equiangular terminology has become more prevalent.

\subsection{Seidel Matrices and Equiangular Tight Frames} 
\begin{defn}A matrix $Q$ is called a Seidel matrix provided that it is self-adjoint, its diagonal entries are $0$, and its off-diagonal entries are all of modulus $1$.
\end{defn} 

The previous section shows that an $(n, k)$-frame is determined up to unitary equivalence
by its Grammian matrix. This reduces the problem of constructing an
$(n, k)$-frame to constructing an $n \times n$ self-adjoint projection $P$ of rank $k$.
If an $(n, k)$-frame $\{f_1, f_2,\hdots,f_n\}$ is uniform, then it is known that $\|f_i\|^2 = \frac{k}{n}$
for all $i=\{1, 2,\hdots,n\}$. It is shown in (\cite{holmes}, Theorem 2.5) that if $\{f_1, \hdots, f_n\}$ is an
equiangular $(n, k)$-frame, then for all $i\neq j$, $|\langle f_j , f_i\rangle| = c_{n,k} = \sqrt{\frac{k(n-k)}{n^2(n-1)}}$.
Thus we may write \[V V ^* = (\frac{k}{n})I_n + c_{n,k}Q\] where $Q$ is a self-adjoint $n\times n$ matrix satisfying $Q_{ii} = 0$ for all $i$ and $|Q_{ij}|=1$ for all $i \neq j$. This matrix $Q$ is called the signature matrix associated with the $(n, k)$-equiangular frame.
$ $
The following theorem characterizes the signature matrices of equiangular $(n, k)$-
frames.
\begin{thm}\label{sigm}(Theorem 3.3 of \cite{holmes}). Let $Q$ be a self-adjoint $n \times n$ matrix with
$Q_{ii} = 0$ and $|Q_{ij} | = 1$ for all $i \neq j$. Then the following are equivalent:
\begin{enumerate}

\item $Q$ is the signature matrix of an equiangular $(n, k)$-frame for some $k$;
\item[(a)]\label{condsigm2} $Q^2 = (n - 1)I + \mu Q$ for some necessarily real number $\mu$; and
\item[(b)]\label{condsigm3} $Q$ has exactly two eigenvalues.
\end{enumerate}
\end{thm}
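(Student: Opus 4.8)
The plan is to prove the three conditions equivalent by first establishing the purely linear-algebraic equivalence $(a) \Leftrightarrow (b)$, which concerns only the matrix $Q$, and then linking these to the frame-theoretic condition $(1)$ through the candidate operator $P = \frac{k}{n}I + c_{n,k}Q$. Throughout I would exploit that $Q$ is self-adjoint, hence unitarily diagonalizable with real eigenvalues, together with the two trace identities forced by the hypotheses: $\operatorname{tr}(Q) = 0$ (the diagonal vanishes) and $\operatorname{tr}(Q^2) = \sum_{i,j}|Q_{ij}|^2 = n(n-1)$ (diagonal zero, off-diagonal entries of modulus one).

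For $(b) \Rightarrow (a)$, if $Q$ has exactly two eigenvalues $\lambda_1, \lambda_2$, its minimal polynomial is $(x-\lambda_1)(x-\lambda_2)$, so $Q^2 = (\lambda_1+\lambda_2)Q - \lambda_1\lambda_2 I$. Taking traces and substituting $\operatorname{tr}(Q)=0$ and $\operatorname{tr}(Q^2)=n(n-1)$ forces $-\lambda_1\lambda_2 = n-1$, which is exactly $(a)$ with $\mu = \lambda_1 + \lambda_2$ real. Conversely, for $(a) \Rightarrow (b)$, the identity $Q^2 = (n-1)I + \mu Q$ says $Q$ annihilates the polynomial $x^2 - \mu x - (n-1)$, whose two roots are distinct because its discriminant $\mu^2 + 4(n-1)$ is strictly positive for $n \geq 2$; hence $Q$ has at most two eigenvalues, and it cannot have just one, since a self-adjoint matrix with a single eigenvalue is scalar, which would force $Q = 0$ by the vanishing diagonal, contradicting $|Q_{ij}|=1$.

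To connect with $(1)$: for $(1) \Rightarrow (a)$ I would substitute $P = \frac{k}{n}I + c_{n,k}Q$ into $P^2 = P$, collect the coefficients of $I$ and of $Q$, and solve for $Q^2$; using $c_{n,k}^2 = \frac{k(n-k)}{n^2(n-1)}$ the coefficient of $I$ collapses to exactly $n-1$, yielding $(a)$ with $\mu = \frac{1}{c_{n,k}}\bigl(1 - \frac{2k}{n}\bigr)$. For the reverse $(b) \Rightarrow (1)$ I would build the spectral projection onto the top eigenspace, $P = \frac{1}{\lambda_1 - \lambda_2}(Q - \lambda_2 I)$, and verify directly (using $Q^2 = (\lambda_1+\lambda_2)Q - \lambda_1\lambda_2 I$) that $P^2 = P$; since $P$ is self-adjoint it is an orthogonal projection of rank $k := \operatorname{tr}(P)$, and factoring $P = VV^*$ produces an isometry $V$ whose columns form the desired equiangular $(n,k)$-frame with signature matrix $Q$.

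The main obstacle---really the only nontrivial point---will be confirming in $(b) \Rightarrow (1)$ that the spectral projection has precisely the normalized form $\frac{k}{n}I + c_{n,k}Q$ demanded by an equiangular frame, i.e. that its $Q$-coefficient $\frac{1}{\lambda_1 - \lambda_2}$ equals $\sqrt{\frac{k(n-k)}{n^2(n-1)}}$. I would secure this by reading off $\frac{k}{n} = \frac{-\lambda_2}{\lambda_1 - \lambda_2}$ from $\operatorname{tr}(P) = k$ and $\operatorname{tr}(Q) = 0$, whence $\frac{n-k}{n} = \frac{\lambda_1}{\lambda_1 - \lambda_2}$; then $\frac{k(n-k)}{n^2} = \frac{-\lambda_1\lambda_2}{(\lambda_1-\lambda_2)^2} = \frac{n-1}{(\lambda_1-\lambda_2)^2}$, so that the two constants agree. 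Note that $\lambda_1\lambda_2 = -(n-1) < 0$ guarantees $\lambda_2 < 0 < \lambda_1$, keeping $k/n$ strictly between $0$ and $1$ and the projection genuinely of intermediate rank.
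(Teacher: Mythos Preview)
Your argument is correct and is essentially the standard proof of this result. However, note that the paper does not actually prove this theorem: it is quoted verbatim as Theorem~3.3 of \cite{holmes} and used throughout as a black box, so there is no in-paper proof to compare against. Your treatment---the trace identities $\operatorname{tr}(Q)=0$ and $\operatorname{tr}(Q^2)=n(n-1)$, the minimal-polynomial argument for $(a)\Leftrightarrow(b)$, and the spectral-projection construction for $(b)\Rightarrow(1)$---matches the approach in the original reference and fills the gap cleanly.
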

This result reduces the problem of constructing equiangular $(n, k)$-frames to
the problem of constructing Seidel matrices with two eigenvalues. In particular,
condition \eqref{condsigm2} is particularly useful since it gives an easy-to-check condition to
verify that a matrix $Q$ is the signature matrix of an equiangular tight frame.
Furthermore, if $Q$ is a matrix satisfying any of the three equivalent conditions
in Theorem \ref{sigm}, and if $\lambda_1 < 0 < \lambda_2$ are its two eigenvalues, then the parameters
$n$, $k$, $\mu$, $\lambda_1$, and $\lambda_2$ satisfy the following properties:
\begin{align}\label{relation}\mu = (n-2k)\sqrt{\frac{n-1}{k(n-k)}} = \lambda_1 + \lambda_2, && k =\frac{n}{2} -\frac{\mu n}{2 \sqrt{4(n-1) + \mu^2}}\end{align}
\begin{align}\lambda_1 = -\sqrt{\frac{k(n-1)}{n-k}}, &&\lambda_2 = \sqrt{\frac{(n-1)(n-k)}{k}}, &&  n = 1 -\lambda_1\lambda_2.\end{align}
These equations follow from the results in (\cite{holmes} Proposition 3.2) and (\cite{holmes}, Theorem
3.3), and by solving for $\lambda_1$ and $\lambda_2$ from the given equations.
In the case when the entries of $Q$ are all real, we have that the diagonal entries
of $Q$ are $0$ and the off-diagonal entries of $Q$ are $\pm 1$. In \cite{grass}, it has been noted that signature matrices of real equiangular frames are always Seidel adjacency matrices of regular two-graphs.

\begin{defn} Two Seidel matrices $Q$ and $Q^{'}$ are switching equivalent if they
can be obtained from each other by conjugating with a diagonal unitary and a
permutation matrix.
\end{defn}

\begin{prop}\label{switch}Let $Q$ be a signature matrix of an $(n,k)$-equiangular frame. If $Q$ is switching equivalent to a Seidel matrix $Q^{'}$, then $Q^{'}$ is also a signature matrix of an $(n,k)$-equiangular frame. 
\end{prop}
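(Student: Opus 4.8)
The plan is to reduce everything to the algebraic characterization in Theorem \ref{sigm}, exploiting the fact that condition (a) there, namely $Q^2 = (n-1)I + \mu Q$, is manifestly preserved under unitary conjugation. First I would unpack the hypothesis: by the definition of switching equivalence there is a diagonal unitary $D$ and a permutation matrix $\Pi$ such that, setting $U = D\Pi$, one has $Q' = U Q U^{*}$ with $U$ unitary. Since $Q$ is the signature matrix of an equiangular $(n,k)$-frame, Theorem \ref{sigm} supplies a real number $\mu$ with $Q^2 = (n-1)I + \mu Q$.

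The central computation is then to transport condition (a) through the conjugation. Because $U^{*}U = I$, I would write $Q'^{2} = U Q U^{*} U Q U^{*} = U Q^{2} U^{*} = U\bigl((n-1)I + \mu Q\bigr)U^{*} = (n-1)I + \mu Q'$, so that $Q'$ satisfies condition (a) with the \emph{same} $\mu$. As $Q'$ is assumed to be a Seidel matrix (self-adjoint, zero diagonal, unimodular off-diagonal), Theorem \ref{sigm} immediately yields that $Q'$ is the signature matrix of an equiangular $(n,k')$-frame for some $k'$. It is worth remarking, though strictly not needed since the hypothesis already grants it, that the Seidel property is automatic: a direct index computation gives $(Q')_{ij} = d_i\,\overline{d_j}\,Q_{\sigma(i)\sigma(j)}$ for the associated permutation $\sigma$, whence the diagonal vanishes and the off-diagonal entries are unimodular because $|d_i| = 1$.

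The remaining step is to identify $k' = k$. The parameter $k$ is recovered from the spectral data of the signature matrix through the relations in \eqref{relation}, in particular $k = \tfrac{n}{2} - \tfrac{\mu n}{2\sqrt{4(n-1)+\mu^{2}}}$, where $n$ is simply the common order of the matrices. Since conjugation by a unitary preserves the entire spectrum together with multiplicities, $Q$ and $Q'$ share the same eigenvalues $\lambda_1 < 0 < \lambda_2$ and the same $\mu = \lambda_1 + \lambda_2$; feeding this common $(n,\mu)$ into the formula forces $k' = k$. I do not expect a genuine obstacle here: the whole proposition is an expression of the invariance of spectral and polynomial data under conjugation, and the only mildly delicate point is making sure the value of $k$ is read off from a quantity that is a conjugation invariant, which the spectral formula for $k$ guarantees.
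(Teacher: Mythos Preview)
Your argument is correct and follows essentially the same approach as the paper: conjugate the relation $Q^2=(n-1)I+\mu Q$ by the unitary $U=D\Pi$ coming from switching equivalence to obtain the same relation for $Q'$, then invoke Theorem~\ref{sigm}. Your additional remark that $k'=k$ follows from the formula in \eqref{relation} because $\mu$ is preserved makes explicit a step the paper leaves implicit.
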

\begin{proof}Since $Q$ and $Q^{'}$ are switching equivalent, there exist a diagonal unitary $U$ and a permutation $P$ such that $Q^{'} = UPQP^{t}U^{*}$. Then \begin{align*}
Q^{{'}^2} & = (UPQP^{t}U^{*})^2\\
& = UPQ^2P^{t}U^{*}\\
& = UP((n-1)I + \mu Q)P^{t}U^{*} &&\text{from Theorem \ref{sigm}}\\
& = (n-1)I + \mu UPQP^{t}U^{*}\\
& = (n-1)I + \mu Q^{'}.\end{align*}
Again by using Theorem \ref{sigm}, $Q^{'}$ forms a signature matrix for an $(n,k)$-equiangular frame.
\end{proof}

\section{Real Equiangular Frames and Signature sets}\label{sec2}
Let $G$ be a finite group of order $n$. Let $\lambda: G \longrightarrow GL(\bb F(G))$ be the left regular representation such that 
$\lambda(g)e_h=e_{gh}$ where $\bb F(G)$ is the free vector space over $G$. Then we know that $\sum_{g\in G}\lambda(g)= J$ where $J$ is the $n\times n$ matrix of all $1's$.
\begin{defn}\label{defsigset}
Let $G$ be a group of order $n$ and $S \subset G\setminus\{e\}$, $T = S^c\setminus \{e\}$ such that $G\setminus \{e\} = S\cup T$.  Form  
$ Q = \sum_{g\in S}\lambda(g)- \sum_{h\in T}\lambda(h)$. Then $Q$ is an $n\times n$ matrix with $Q_{ii} = 0$ and $|Q_{ij} | = 1$ for all $i \neq j$. We call $S$ a signature set for an $(n,k)$-equiangular frame if $Q$ is a signature matrix for an $(n,k)$-equiangular frame. 
\end{defn}
\begin{prop}\label{prop1}Let $G$ be a finite group of order $n$ and $S\subset G$. If $S$ forms a signature set for an $(n,k)$-equiangular frame, then $T= S^c\setminus\{e\}$ forms a signature set for an $(n,n-k)$-equiangular frame. 
\end{prop}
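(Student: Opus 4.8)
The plan is to recognize that the signature matrix attached to $T$ is simply the negative of the one attached to $S$, and then to feed this into Theorem \ref{sigm}. First I would identify the matrix built from $T$ according to Definition \ref{defsigset}. Since $T = S^c\setminus\{e\}$, the complement of $T$ inside $G\setminus\{e\}$ is exactly $S$, so the matrix associated with $T$ is
\[
Q' \;=\; \sum_{g\in T}\lambda(g)-\sum_{h\in S}\lambda(h) \;=\; -\Bigl(\sum_{h\in S}\lambda(h)-\sum_{g\in T}\lambda(g)\Bigr)\;=\;-Q .
\]
Thus the entire problem reduces to showing that if $Q$ is the signature matrix of an $(n,k)$-equiangular frame, then $-Q$ is the signature matrix of an $(n,n-k)$-equiangular frame.

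Next I would verify the equivalent conditions of Theorem \ref{sigm} for $-Q$. It is immediate that $-Q$ is self-adjoint with zero diagonal and unimodular off-diagonal entries, so it is a Seidel matrix. By hypothesis $Q$ satisfies $Q^2=(n-1)I+\mu Q$ for the real constant $\mu=\lambda_1+\lambda_2$. Negating and squaring gives $(-Q)^2=Q^2=(n-1)I+\mu Q=(n-1)I+(-\mu)(-Q)$, so $-Q$ satisfies condition (a) with parameter $-\mu$; equivalently, since $Q$ has exactly the two eigenvalues $\lambda_1<0<\lambda_2$, the matrix $-Q$ has exactly the two eigenvalues $-\lambda_2<0<-\lambda_1$. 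Either route lets Theorem \ref{sigm} conclude that $-Q$ is the signature matrix of an equiangular $(n,k')$-frame for some $k'$.

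It then remains only to pin down $k'$, which is a one-line computation. Substituting $-\mu$ into the formula $k=\frac{n}{2}-\frac{\mu n}{2\sqrt{4(n-1)+\mu^2}}$ from \eqref{relation} yields
\[
k'=\frac{n}{2}-\frac{(-\mu)\,n}{2\sqrt{4(n-1)+\mu^2}}=n-\Bigl(\frac{n}{2}-\frac{\mu n}{2\sqrt{4(n-1)+\mu^2}}\Bigr)=n-k,
\]
so $T$ is a signature set for an $(n,n-k)$-equiangular frame. I do not expect a genuine obstacle here: essentially all of the content lies in the observation $Q'=-Q$, after which Theorem \ref{sigm} does the work and the dimension count is routine. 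As a conceptual alternative (which also serves as a sanity check on the value of $k'$) I could instead note that $c_{n,k}=c_{n,n-k}$, so that the complementary projection $I-VV^*=\frac{n-k}{n}I_n+c_{n,n-k}(-Q)$ of rank $n-k$ is precisely the Grammian of an $(n,n-k)$-equiangular frame whose signature matrix is $-Q$.
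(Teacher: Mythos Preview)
Your proof is correct and follows essentially the same route as the paper: observe that the matrix attached to $T$ is $-Q$, verify $(-Q)^2=(n-1)I+(-\mu)(-Q)$ via Theorem \ref{sigm}, and read off $k'=n-k$ from \eqref{relation}. The additional eigenvalue remark and the complementary-projection sanity check are nice embellishments but not needed.
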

\begin{proof} If $S$ forms a signature set for an $(n,k)$-equiangular frame, then $Q$ as in Definition \ref{defsigset} forms a signature matrix for $(n,k)$-equiangular frame.\\ Let $\tilde Q = \sum_{h\in T}\lambda(h)- \sum_{g\in S}\lambda(g)$. Then $\tilde Q = -Q$ and thus $\tilde Q$ is self adjoint with $\tilde Q_{ii}=0$ and for all $i\neq j$, $\tilde |Q_{ij}|=1$. Consider 
\begin{align*}\tilde Q^2 & = (-Q)^2\\
& = Q^2\\
& = (n-1)I +\mu Q && \text{by Theorem \ref{sigm}}\\
& = (n-1)I -\mu \tilde Q
.\end{align*}
Thus $\tilde Q$ forms a signature matrix for $(n,\tilde k)$-equiangular frame for some $\tilde k$. Using Equation \eqref{relation} for $-\mu$, we get $\tilde k =n-k$. Thus, $T$ forms a signature set for for an $(n,n-k)$-equiangular frame.  
\end{proof}
\begin{defn}
Given any subset $S$ of $G$, a subset $\tilde S$ of $G$ is said to be conjugate to $S$ if and only if there exists some $\tilde g$ in G such that  $\tilde S = \tilde g S \tilde g^{-1}$. 
\end{defn}

\begin{prop}\label{conj}Let $G$ be a finite group of order $n$ and $S\subset G$. If $S$ forms a signature set for an $(n,k)$-equiangular frame, then for any $\tilde g\in G$, the set $\tilde S = \tilde g S \tilde g^{-1}= \{\tilde g \cdotp g \cdotp\tilde g^{-1}: g\in S\}$ also forms a signature set for an $(n,k)$-equiangular frame.\end{prop}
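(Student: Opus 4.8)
The plan is to show that the signature matrix $\tilde Q$ associated with $\tilde S$ is switching equivalent to the signature matrix $Q$ associated with $S$, and then to invoke Proposition \ref{switch}. The one algebraic fact driving everything is that conjugation in $G$ is carried by the left regular representation into matrix conjugation, because $\lambda$ is a group homomorphism: $\lambda(\tilde g\, g\, \tilde g^{-1}) = \lambda(\tilde g)\lambda(g)\lambda(\tilde g)^{-1}$ for every $g\in G$.

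First I would check that the ``complement and delete the identity'' operation commutes with conjugation. Since conjugation by $\tilde g$ is a bijection of $G$ fixing $e$, it sends $S^c$ to $(\tilde g S \tilde g^{-1})^c = \tilde S^c$, and hence carries $T = S^c\setminus\{e\}$ onto $\tilde T := \tilde S^c\setminus\{e\} = \tilde g\, T\, \tilde g^{-1}$. This bookkeeping is the one place where care is needed, as one must confirm that the index sets appearing in the definition of $\tilde Q$ are genuinely the conjugates of those defining $Q$, rather than sets that merely resemble them.

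Next, applying the homomorphism property term by term, I would compute
\[
\tilde Q = \sum_{g\in\tilde S}\lambda(g) - \sum_{h\in\tilde T}\lambda(h) = \lambda(\tilde g)\Bigl(\sum_{g\in S}\lambda(g) - \sum_{h\in T}\lambda(h)\Bigr)\lambda(\tilde g)^{-1} = \lambda(\tilde g)\,Q\,\lambda(\tilde g)^{-1}.
\]
Because $\lambda(\tilde g)$ is a permutation matrix it is orthogonal, so $\lambda(\tilde g)^{-1} = \lambda(\tilde g)^{t}$, and the displayed identity exhibits $\tilde Q$ as a conjugate of $Q$ by a permutation matrix; this is a switching equivalence with trivial diagonal unitary.

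Finally, since $\tilde Q$ is switching equivalent to the signature matrix $Q$ of an $(n,k)$-equiangular frame, Proposition \ref{switch} immediately yields that $\tilde Q$ is itself a signature matrix of an $(n,k)$-equiangular frame, so $\tilde S$ is a signature set of the asserted type. I do not anticipate a genuine obstacle: the only subtlety is the set-theoretic verification of the first step, after which the result reduces entirely to the homomorphism property of $\lambda$ and Proposition \ref{switch}.
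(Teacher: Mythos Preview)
Your proposal is correct and follows essentially the same route as the paper: compute $\tilde Q = \lambda(\tilde g)\,Q\,\lambda(\tilde g)^{-1}$ via the homomorphism property of $\lambda$, observe that $\lambda(\tilde g)$ is a permutation matrix, and invoke Proposition~\ref{switch}. Your explicit verification that $\tilde T = \tilde g\,T\,\tilde g^{-1}$ is a step the paper uses implicitly but does not spell out.
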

\begin{proof} Let $T = S^c\setminus\{e\}$ and $\tilde T = \tilde S^c\setminus \{e\}$. Form $Q= \sum_{g\in S}\lambda(g)- \sum_{h\in T}\lambda( h)$ and $\tilde Q = \sum_{g\in \tilde S}\lambda(g)-\sum_{h\in \tilde T}\lambda(h)$. Then, 
\begin{align*}\tilde Q & = \sum_{g\in S}\lambda(\tilde g\cdotp g\cdotp\tilde g^{-1})-\sum_{h\in T}\lambda(\tilde g \cdotp h \cdotp\tilde g^{-1})\\
 & = \sum_{g\in S}\lambda(\tilde g) \lambda (g)\lambda(\tilde g^{-1})-\sum_{h\in T}\lambda(\tilde g)\lambda( h)\lambda(\tilde g^{-1})\\
& = \lambda(\tilde g)( \sum_{g\in S}\lambda(g)- \sum_{h\in T}\lambda( h)) \lambda(\tilde g^{-1})\\
& = \lambda(\tilde g)Q \lambda(\tilde g^{-1}).\end{align*} 
$Q$ self adjoint implies that $\tilde Q$ is self adjoint. Also since for all $g\in G$, $\lambda(g)$ is a permutation, from Proposition \ref{switch}, $\tilde Q$ also forms a signature matrix for an $(n,k)$-equiangular frame and thus $\tilde S$ forms a signature set for an $(n,k)$-equiangular frame. 
\end{proof}

\noindent Given subsets $A,B \subseteq G$ and $g\in G$, we define \[N_{(A,B)}^g = \#\{(g_1,g_2)\in A\times B: g_1\cdot g_2=g\}.\]
$ $
 
\begin{lem}\label{lem1}Let $G$ be a finite group and $S,T \subset G\setminus \{e\}$ be disjoint such that $G\setminus\{e\}= S\cup T$. Then for all $g\in G$, $N_{(S,T)}^g = N_{(T,S)}^g$.\end{lem}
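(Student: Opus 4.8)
The plan is to avoid any argument by inversion (i.e.\ mapping $(s,t)$ to $(t^{-1},s^{-1})$), since that would require $S$ or $T$ to be closed under inverses, whereas the hypotheses only say that $S$ and $T$ are disjoint with $S \cup T = G \setminus \{e\}$. Instead I would reduce both quantities to a common expression by exploiting the fact that $N_{(A,B)}^g$ is additive in each slot over disjoint unions: if $A = A_1 \sqcup A_2$ then $N_{(A,B)}^g = N_{(A_1,B)}^g + N_{(A_2,B)}^g$, and similarly in the second slot. This is immediate, because the pairs counted by $N_{(A,B)}^g$ split according to which part of the (disjoint) union each coordinate lies in.

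Applying this with the partition $G \setminus \{e\} = S \sqcup T$, I would write $N_{(S,S)}^g + N_{(S,T)}^g = N_{(S,\,G\setminus\{e\})}^g$ and $N_{(S,S)}^g + N_{(T,S)}^g = N_{(G\setminus\{e\},\,S)}^g$. Since the term $N_{(S,S)}^g$ is common to both, the desired identity $N_{(S,T)}^g = N_{(T,S)}^g$ is equivalent to the cleaner statement $N_{(S,\,G\setminus\{e\})}^g = N_{(G\setminus\{e\},\,S)}^g$.

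The remaining step is a direct count of these two numbers, using that in a group the equation $xy = g$ has a unique solution once one coordinate is fixed. For $N_{(S,\,G\setminus\{e\})}^g$ I would parametrize the pairs by their first coordinate: each $s \in S$ determines the unique $y = s^{-1}g \in G$ with $sy = g$, and this $y$ lies in $G \setminus \{e\}$ precisely when $s \neq g$; hence the count is $|S| - \mathbf{1}_S(g)$, where $\mathbf{1}_S(g) = 1$ if $g \in S$ and $0$ otherwise. Carrying out the symmetric computation for $N_{(G\setminus\{e\},\,S)}^g$, now parametrizing by the second coordinate $s \in S$ with $x = gs^{-1}$, yields the same value $|S| - \mathbf{1}_S(g)$. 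The two numbers therefore agree, and subtracting the common $N_{(S,S)}^g$ finishes the argument.

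The only point requiring care is the bookkeeping around the excluded identity element: because $S$ and $T$ both omit $e$, each of $N_{(S,\,G\setminus\{e\})}^g$ and $N_{(G\setminus\{e\},\,S)}^g$ loses exactly one otherwise-valid pair, and the crux is that each loses it under the \emph{same} condition $g \in S$, so the two corrections cancel. I expect this to be the main (and really only) subtlety; the additivity of $N$ and the one-to-one parametrization of solutions of $xy = g$ are entirely routine.
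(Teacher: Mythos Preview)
Your proof is correct and rests on the same underlying idea as the paper's: in a group, fixing one coordinate of $xy=g$ determines the other uniquely, so the counts $N_{(S,\cdot)}^g$ and $N_{(\cdot,S)}^g$ can be read off from $|S|$ once the identity element is excluded. The paper carries this out by a case split on whether $g\in S$ or $g\in T$, first computing $N_{(S,S)}^g$ and $N_{(T,T)}^g$ as intermediate quantities and then backing out $N_{(T,S)}^g$; your version packages the same bookkeeping more cleanly by passing to $N_{(S,\,G\setminus\{e\})}^g$ and $N_{(G\setminus\{e\},\,S)}^g$, which you evaluate uniformly as $|S|-\mathbf{1}_S(g)$ without splitting into cases. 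The indicator-function formulation handles $g\in S$, $g\in T$, and $g=e$ simultaneously, which is a modest improvement in presentation, but the mathematical content of the two arguments is the same.
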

\begin{proof}Let $|G| =n$ and $|S|= m$. Then $|T|= n-m-1$. For $g\in S$, assume that $N_{(S,T)}^g= l$. Then there are $l$ ordered pairs $(g_i,h_i)\in S\times T$ such that for all $i\in \{1,\dots,l\}$, $g_i\cdot h_i = g$. Let us order the elements of $S$ as $\{g,g_1,\dots,g_l,g_{l+1},\dots,g_{m-1} \}$. Thus for all $i\in \{l+1,\dots, m-1\}$, we have $g_i\cdot g_j = g$ for some $g_j\in S$, $g_j\neq g$. That is $N_{(S,S)}^g = m-1-l$.  Again if we order the elements of $T$ as $\{h_1,\dots,h_l,h_{l+1},\dots, h_{n-m-1}\}$, then for all $i\in \{l+1,\dots, n-m-1\} $, we have $h_j\cdot h_i = g$ for some $h_j\in T$. Thus $N_{(T,T)}^g= n-m-1-l$. Since $N_{(T,T)}^g+ N_{(T,S)}^g = |T| =n-m-1 $, we have $N_{(T,S)}^g= l$. 
Similarly we can prove that for all $h\in T$, $N_{(S,T)}^h = N_{(T,S)}^h$.
\end{proof}
\vspace{0.2in}
Following is a necessary and sufficient condition for a set $S$ in $G$ to be a signature set for an $(n,k)$-equiangular frame. 
\begin{thm}\label{mainthm1} Let $G$ be a finite group of order $n$ and $S,T \subset G\setminus \{e\}$ where $T = S^c\setminus \{e\}$ such that $G\setminus \{e\} = S\cup T$. Then there exists a $k$ such that $S$ is a signature set for an $(n,k)$-equiangular frame if and only if the following hold: 
\begin{enumerate}
\item[(a)] $g\in S$ implies $g^{-1} \in S$ and $h\in T$ implies $h^{-1} \in T$;
\item[(b)] there exists a real number $\mu$ such that for all $g \in S$;
\begin{equation}\label{cond1}N_{(S,S)}^g -2 N_{(S,T)}^g  +N_{(T,T)}^g  = \mu ;\end{equation} and 
for all $h\in T$, \begin{equation}\label{cond2}N_{(S,S)}^h - 2N_{(S,T)}^h + N_{(T,T)}^h  = -\mu.\end{equation}
\end{enumerate}
In this case $k$ and $\mu$ are related by the Equations in \eqref{relation}.\end{thm}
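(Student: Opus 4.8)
The plan is to read both conditions directly off Theorem~\ref{sigm}. Since the matrix $Q$ built in Definition~\ref{defsigset} automatically satisfies $Q_{ii}=0$ and $|Q_{ij}|=1$, it is the signature matrix of an $(n,k)$-equiangular frame for some $k$ precisely when it is self-adjoint and satisfies $Q^2=(n-1)I+\mu Q$ for some real $\mu$. I would therefore prove that condition~(a) is equivalent to self-adjointness of $Q$, and that, granting~(a), condition~(b) is equivalent to the quadratic relation. The engine for both equivalences is that the $\lambda(g)$ are linearly independent (the column of $\lambda(g)$ indexed by $e$ is the basis vector $e_g$, and these are distinct), so any identity among matrices in the span of $\{\lambda(g):g\in G\}$ may be checked coefficient-by-coefficient.

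For self-adjointness I would use that each $\lambda(g)$ is a permutation matrix with $\lambda(g)^{*}=\lambda(g^{-1})$, so that $Q^{*}=\sum_{g\in S}\lambda(g^{-1})-\sum_{h\in T}\lambda(h^{-1})$. Comparing the coefficient of $\lambda(g)$ in $Q$ and in $Q^{*}$ shows that $Q=Q^{*}$ if and only if $g\in S \Leftrightarrow g^{-1}\in S$ for every $g\neq e$, i.e. $S=S^{-1}$ (equivalently $T=T^{-1}$), which is exactly condition~(a).

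The core computation is the quadratic relation. Writing $\chi_A=\sum_{a\in A}\lambda(a)$ and using $\lambda(a)\lambda(b)=\lambda(ab)$, I would first record the identity $\chi_A\chi_B=\sum_{g\in G}N_{(A,B)}^{g}\,\lambda(g)$, since the number of factorizations $g=ab$ with $(a,b)\in A\times B$ is $N_{(A,B)}^{g}$. Expanding $Q^2=(\chi_S-\chi_T)^2$ and invoking Lemma~\ref{lem1} to merge the two cross terms ($N_{(S,T)}^{g}=N_{(T,S)}^{g}$) gives
\[
Q^2=\sum_{g\in G}\bigl(N_{(S,S)}^{g}-2N_{(S,T)}^{g}+N_{(T,T)}^{g}\bigr)\lambda(g).
\]
A short check using~(a) shows the coefficient at $g=e$ equals $|S|+|T|=n-1$ (indeed $N_{(S,S)}^{e}=|S|$, $N_{(T,T)}^{e}=|T|$, and $N_{(S,T)}^{e}=0$ because $S^{-1}=S$ is disjoint from $T$). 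By linear independence of the $\lambda(g)$, the relation $Q^2=(n-1)I+\mu Q$ is then equivalent to matching coefficients at the remaining group elements: the coefficient must equal $\mu$ for every $g\in S$ and $-\mu$ for every $h\in T$, which are precisely~\eqref{cond1} and~\eqref{cond2}.

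Assembling these, the forward direction follows because a signature matrix is self-adjoint (giving~(a)) and satisfies $Q^2=(n-1)I+\mu Q$ by Theorem~\ref{sigm} (giving~(b) after matching coefficients); conversely, (a) makes $Q$ self-adjoint and (b) yields the quadratic relation, whence Theorem~\ref{sigm} produces the frame and so certifies $S$ as a signature set. The stated link between $k$ and $\mu$ is then just~\eqref{relation}. I expect the only delicate points to be the bookkeeping in the $Q^2$ expansion, specifically the clean appearance of the factor $2$ via Lemma~\ref{lem1} and the verification that condition~(a) forces the identity coefficient to be $n-1$, rather than any conceptual difficulty.
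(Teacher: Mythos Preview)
Your proposal is correct and follows essentially the same route as the paper: form $Q$, translate the signature-matrix conditions of Theorem~\ref{sigm} into statements about the coefficients of the $\lambda(g)$, use $\lambda(g)^*=\lambda(g^{-1})$ for self-adjointness, expand $Q^2$ via $\lambda(a)\lambda(b)=\lambda(ab)$, and invoke Lemma~\ref{lem1} to collapse the cross terms. If anything, you are slightly more explicit than the paper in spelling out the linear independence of the $\lambda(g)$ and in verifying that the coefficient at $e$ is $n-1$, both of which the paper leaves implicit.
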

\begin{proof}
Form $Q =\sum_{g\in S}\lambda(g)- \sum_{h\in T}\lambda(h)$. Then by Definition \ref{defsigset}, $S$ will form a signature set for an $(n,k)$-equiangular frame if and only if $Q$ forms a signature matrix for an $(n,k)$-equiangular frame. From Theorem \ref{sigm} we know that an $n\times n$ matrix $Q$ with $Q_{ii}=0$ and for all $i\neq j$, $|Q_{ij}|=1$, forms a signature matrix for an $(n,k)$ equiangular frame if and only if it satisfies the following two conditions:
\begin{enumerate}
\item[(a)] $Q$ is self adjoint that is $Q = Q^*$; and 
\item[(b)] $Q^2 = (n-1)I + \mu Q$ for some real number $\mu$.
\end{enumerate}
The condition $Q = Q^*$ is equivalent to \begin{align*}\sum_{g\in S}\lambda(g)- \sum_{h\in T}\lambda(h)& = (\sum_{g\in S}\lambda(g)- \sum_{h\in T}\lambda(h))^*\\
& = \sum_{g\in S}\lambda(g^{-1})- \sum_{h\in T}\lambda(h^{-1}).\end{align*} 
Thus $g\in S$ implies $g^{-1}\in S$ and $h\in T$ implies $h^{-1} \in T$.
The second condition $Q^2 = (n-1)I+ \mu Q$, for some real number $\mu$, is equivalent to 
\begin{equation*}
\begin{split}
& \sum_{\substack{
g_1,g_2\in S\\
g_1\neq g_2}}\!\!\lambda(g_1\cdot g_2) -\sum_{\substack{
g_1\in S\\
h_1\in T}}\lambda(g_1\cdot h_1)-\sum_{\substack{
g_1\in S\\
h_1\in T}}\lambda(h_1\cdot g_1) +\sum_{\substack{
h_1,h_2\in T\\
h_1\neq h_2}}\lambda(h_1\cdot h_2)\\
 & = (n-1)I + \mu (\sum_{g\in S}\lambda(g)- \sum_{h\in T}\lambda(h)).
\end{split}
\end{equation*}
By counting arguments, we have $Q^2 = (n-1)I + \mu Q$, for some real number $\mu$, if and only if for all $g\in S$, $N_{(S,S)}^g - N_{(S,T)}^g- N_{(T,S)}^g+N_{(T,T)}^g  = \mu $ and 
for all $h\in T$, we have $N_{(S,S)}^h -N_{(S,T)}^h-N_{(T,S)}^h  +N_{(T,T)}^h  = -\mu $. Using Lemma \ref{lem1}, we have $Q^2 = (n-1)I + \mu Q$, for some real number $\mu$, if and only if for all $g\in S$, $N_{(S,S)}^g - 2N_{(S,T)}^g+N_{(T,T)}^g  = \mu $ and 
for all $h\in T$, we have $N_{(S,S)}^h -2N_{(S,T)}^h+N_{(T,T)}^h  = -\mu$.
\end{proof}
\begin{rmk}From the relations given in the Equation \eqref{relation}, since $k$ is a function of $\mu$, we shall often use the parameter $\mu$ to specify our frames and denote them as $(n,k(\mu))$-equiangular frames.
\end{rmk}
Using some counting arguments, we can further simplify conditions \eqref{cond1} and \eqref{cond2} of Theorem \ref{mainthm1} given in the following result.
\begin{thm}\label{mainthm2}Let $G$ be a group with $|G|= n$. Let $S,T \subset G\setminus\{e\}$ where $T = S^c\setminus \{e\}$ such that $G\setminus \{e\} = S\cup T$. Also let $S = S^{-1}$ and $T = T^{-1}$. Then there exists $\mu$ such that $S$ forms a signature set for an $(n,k(\mu))$-equiangular frame if and only if for all $g\in S$ \begin{equation}\label{cond3}N_{(S,T)}^g=\frac {n-2-\mu}{4}\end{equation} and for all $h\in T$, \begin{equation}\label{cond4}N_{(S,T)}^h=\frac {n-2+\mu}{4}.\end{equation}\end{thm}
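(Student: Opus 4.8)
The plan is to reduce everything to the characterization already established in Theorem \ref{mainthm1}. Since we are given $S = S^{-1}$ and $T = T^{-1}$, condition (a) of Theorem \ref{mainthm1} holds automatically, so $S$ forms a signature set for an $(n,k(\mu))$-equiangular frame if and only if there is a real $\mu$ with $N_{(S,S)}^g - 2N_{(S,T)}^g + N_{(T,T)}^g = \mu$ for every $g \in S$ and $N_{(S,S)}^h - 2N_{(S,T)}^h + N_{(T,T)}^h = -\mu$ for every $h \in T$. My goal is to show that, under the standing hypotheses, each of these equations is equivalent to the corresponding equation \eqref{cond3} or \eqref{cond4}, so that the stated equivalence will follow at once.

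The key step is a global counting identity. Fix $x \in G \setminus \{e\}$. Each first factor $g_1 \in G$ determines a unique second factor $g_2 = g_1^{-1} x$ with $g_1 g_2 = x$, so the number of ordered factorizations of $x$ over all of $G$ is exactly $n$. Partitioning both factors according to the disjoint decomposition $G = \{e\} \cup S \cup T$, I would observe that the only factorizations using the identity are $(e,x)$ and $(x,e)$; these are distinct because $x \neq e$, and they exhaust the representations involving $e$. Hence exactly two of the $n$ factorizations use the identity, and the remaining $n-2$ have both factors in $S \cup T$, giving
\[ N_{(S,S)}^x + N_{(S,T)}^x + N_{(T,S)}^x + N_{(T,T)}^x = n-2 \]
for every $x \in S \cup T$. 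Applying Lemma \ref{lem1}, which gives $N_{(S,T)}^x = N_{(T,S)}^x$, this identity becomes $N_{(S,S)}^x + 2N_{(S,T)}^x + N_{(T,T)}^x = n-2$.

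With this identity in hand the conclusion is immediate by subtraction. For $g \in S$, subtracting \eqref{cond1} from the identity cancels $N_{(S,S)}^g$ and $N_{(T,T)}^g$ and leaves $4N_{(S,T)}^g = n-2-\mu$, which is exactly \eqref{cond3}; since the counting identity holds unconditionally, this exhibits \eqref{cond1} and \eqref{cond3} as equivalent. The same subtraction for $h \in T$, now using \eqref{cond2}, yields $4N_{(S,T)}^h = n-2+\mu$, that is, \eqref{cond4}. Combining the two equivalences with Theorem \ref{mainthm1} gives the result in both directions, and the relation between $k$ and $\mu$ is inherited from that theorem.

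I expect the only point requiring care to be the bookkeeping in the counting identity: one must check that precisely two ordered factorizations of each $x \in S \cup T$ involve the identity element, with no double counting, so that the off-identity total equals $n-2$ uniformly and independently of whether $x$ lies in $S$ or in $T$. Once that is verified, the remainder is routine linear manipulation of the four counting functions.
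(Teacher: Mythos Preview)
Your proof is correct and follows the same overall strategy as the paper: reduce to Theorem \ref{mainthm1} and then show that conditions \eqref{cond1} and \eqref{cond2} are equivalent to \eqref{cond3} and \eqref{cond4} via a counting argument. The difference is in how the counting is organized. The paper introduces $|S|=l$ and, for $g\in S$, sets $N_{(S,S)}^g=m$; it then uses the two separate identities $|S|=N_{(S,S)}^g+N_{(S,T)}^g+1$ and $|T|=N_{(T,T)}^g+N_{(T,S)}^g$ (together with Lemma \ref{lem1}) to express every term in terms of $l$ and $m$, substitutes into \eqref{cond1}, and solves. You instead combine those two identities into the single global relation $N_{(S,S)}^x+2N_{(S,T)}^x+N_{(T,T)}^x=n-2$, obtained directly by counting all $n$ ordered factorizations of $x$ over $G$, and then subtract \eqref{cond1} or \eqref{cond2} in one step. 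Your route avoids the auxiliary parameter $l=|S|$ entirely and is a bit cleaner; the paper's route, by tracking $|S|$ explicitly, makes the dependence on the cardinality of $S$ visible, which is occasionally useful elsewhere. Either way the linear algebra is the same, and your identification of the ``only point requiring care'' is exactly right.
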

\begin{proof}Assume $|S|=l$. Since $|G|=n$, we have $|T|=n-1-l$. For $g\in S$, let $N_{(S,S)}^g=m$, then since $|S| = N_{(S,S)}^g+  N_{(S,T)}^g+1$, we have $N_{(S,T)}^g = l-1-m$. Also by Lemma \ref{lem1}, we have $N_{(S,T)}^g = N_{(T,S)}^g$ and using $|T| = N_{(T,T)}^g + N_{(T,S)}^g $, we have $N_{(T,T)}^g=n-1-l-(l-1-m)=n-2l+m$.\\ By Theorem \ref{mainthm1}, $S$ will form a signature set for an $(n,k(\mu))$ equiangular frame if and only if Equations \eqref{cond1} and \eqref{cond2} hold. That is for all $g\in S$, $N_{(S,S)}^g - 2N_{(S,T)}^g+N_{(T,T)}^g  = \mu $ and for all $h\in T$, $N_{(S,S)}^h - 2N_{(S,T)}^h+N_{(T,T)}^h  = -\mu $. $N_{(S,S)}^g - 2N_{(S,T)}^g+N_{(T,T)}^g  = \mu $ is equivalent to \begin{align*}
\mu & = m -2(l-1-m)+n-2l+m \\
& = 4m -4l +n +2.\end{align*} Thus, $l-m = \frac {n+2-\mu}{4}$. Since $N_{(S, T)}^g= l-m-1 $, Equation \eqref{cond1} holds if and only if for all $g\in S$, \[N_{(S, T)}^g= \frac {n-2-\mu}{4}.\]
Similarly, for $h\in T$, if $N_{(S, S)}^h = \tilde m$, then using $|S| = N_{(S,S)}^h+ N_{(S,T)}^h $, we have $N_{(S, T)}^h=l-\tilde m$. Also since $|T|=N_{(S, T)}^h+ N_{(T,T)}^h+1$, we have $N_{(T,T)}^h= n-2-l-(l-\tilde m) = n-2-2l+\tilde m$. The condition $N_{(S,S)}^h - 2N_{(S,T)}^h+N_{(T,T)}^h  = -\mu $ is equivalent to \begin{align*}-\mu & = \tilde m- 2(l-\tilde m) + n-2-2l+\tilde m\\
& = n-4l+4\tilde m-2.\end{align*}

Thus we have $l-\tilde m = \frac {n-2+\mu}{4}$. Since $N_{(S, T)}^h=l-\tilde m$, Equation \eqref{cond2} holds if and only if for all $h\in T$ we have, \[N_{(S, T)}^h= \frac {n-2+\mu}{4}.\]
\end{proof}
\begin{cor}\label{even}Let $G$ be a group of order $n$. If there exists a signature set $S$ in $G$ corresponding to an $(n,k(\mu))$-equiangular frame, then the following hold: 
\begin{enumerate}
\item[(a)] $n \equiv 0 \pmod 2$;
\item[(b)] $\mu \equiv 0\pmod2$;
\item[(c)]\label{3}$n$, $\mu$ satisfies $-(n-2)\leq \mu\leq (n-2)$.
\end{enumerate}\end{cor}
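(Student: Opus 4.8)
The plan is to read off all three conclusions directly from the closed-form expressions in Theorem \ref{mainthm2}, using the single structural fact that the quantities $N_{(S,T)}^g$ and $N_{(S,T)}^h$ are cardinalities of sets of ordered pairs, hence \emph{non-negative integers}. So I would first invoke Theorem \ref{mainthm2} (whose hypotheses $S=S^{-1}$ and $T=T^{-1}$ are available, since $S$ is a signature set and Theorem \ref{mainthm1}(a) then forces $g\in S\Rightarrow g^{-1}\in S$ and $h\in T\Rightarrow h^{-1}\in T$) to fix some $g\in S$ and some $h\in T$ and write
\[
N_{(S,T)}^g=\frac{n-2-\mu}{4}\in\mathbb{Z}_{\ge 0},\qquad N_{(S,T)}^h=\frac{n-2+\mu}{4}\in\mathbb{Z}_{\ge 0}.
\]
Everything then follows from the sum, the difference, and the non-negativity of these two numbers.

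For part (c), non-negativity of the first fraction forces $n-2-\mu\ge 0$, i.e.\ $\mu\le n-2$, while non-negativity of the second forces $n-2+\mu\ge 0$, i.e.\ $\mu\ge-(n-2)$; together these give $-(n-2)\le\mu\le n-2$. For parts (a) and (b) I would add and subtract the two displayed expressions: their sum equals $\tfrac{n-2}{2}$ and, being a sum of two non-negative integers, is itself an integer, so $n-2$ is even and $n\equiv 0\pmod 2$; their difference equals $\tfrac{\mu}{2}$ and, being a difference of two integers, is an integer, so $\mu\equiv 0\pmod 2$.

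The only point that needs care is the existence of the elements $g\in S$ and $h\in T$ used above, i.e.\ that both $S$ and $T$ are non-empty. The boundary cases $S=\emptyset$ and $T=\emptyset$ give $Q=I-J$ and $Q=J-I$ respectively, the degenerate simplex frames with $\mu=\pm(n-2)$; these satisfy (c) trivially but should be regarded as excluded (a non-trivial signature set) since for odd $n$ they would fail (a) and (b). I expect this bookkeeping over the empty cases to be the only real obstacle; once a genuine $g\in S$ and $h\in T$ are in hand, all three statements are immediate consequences of the arithmetic above.
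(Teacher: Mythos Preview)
Your argument is essentially identical to the paper's: it too invokes Theorem~\ref{mainthm2}, adds and subtracts the two expressions $\frac{n-2\mp\mu}{4}$ to get the parity statements, and uses $N_{(S,T)}^g,\,N_{(S,T)}^h\ge 0$ for the bound in (c). Your caution about the degenerate cases $S=\emptyset$ or $T=\emptyset$ is well placed---the paper's proof tacitly assumes both sets are non-empty (and indeed, as you note, Remark~\ref{rmk2} shows the corollary would otherwise fail for odd $n$), so you are simply being more careful than the original on this point.
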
 
\begin{proof}
From Theorem \ref{mainthm2}, if $S$ forms a signature set for an $(n,k(\mu))$-equiangular frame, then Equations \eqref{cond3} and \eqref{cond4} hold. If we sum the Equations \eqref{cond3} and \eqref{cond4}, we have $n = 0 \pmod 2$ and subtracting \eqref{cond3} from \eqref{cond4} gives us $\mu \equiv 0\pmod2$.\\
Since $N_{(S,T)}^g,N_{(S,T)}^h\geq 0$, again using \eqref{cond3} and \eqref{cond4}, we have $-(n-2)\leq \mu\leq(n-2)$.

\end{proof}
Using \eqref{relation} and the relations that we have proved in Corollary \ref{even}, we will now classify some of the $(n,k)$-equiangular frames arising from signature sets by looking at specific values of $n$ and $\mu$.
\begin{prop}\label{class}Let $G$ be a group and $S\subset G$ be a signature set for an $(n,k(\mu))$-equiangular frame, then the following hold. 
\begin{enumerate}
\item[(a)] If $\mu =0$, then $n=2m$ where $m \in \bb N$ is an odd number and $S$ forms a signature set for a $(2m,m)$-equiangular frame.
\item[(b)] If $\mu=2$, then $n=4a^2$ where $a\in \bb N$ and $S$ forms a signature set for $(4a^2,2a^2-a)$-equiangular frame.
\item[(c)]If $\mu = -2$, then $n=4a^2$ where $a\in \bb N$ and $S$ forms a signature set for $(4a^2,2a^2+a)$-equiangular frame.
\item[(d)] If $n=2p$ where $p$ is an odd prime, then either $\mu =0$ and $S$ forms a signature set for a $(2p,p)$-equiangular frame or $\mu=n-2$ and $S$ forms a signature set for an $(2p,1)$-equiangular frame.
\item[(e)]If $n=4p$ where $p$ is an odd prime, then $\mu=n-2$ and $S$ forms a signature set for an $(4p,1)$-equiangular frame.
\end{enumerate}
 \end{prop}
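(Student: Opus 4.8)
The plan is to handle all five parts with one toolkit: the relations \eqref{relation}, the parity/bound constraints of Corollary \ref{even}, and the integrality built into Theorem \ref{mainthm2}. I will write $\lambda_1=-a<0<b=\lambda_2$ for the two eigenvalues of $Q$; by \eqref{relation} they satisfy $\lambda_1\lambda_2=1-n$ and $\lambda_1+\lambda_2=\mu$, so whenever they are integers we have $ab=n-1$, $\mu=b-a$, and---pairing the vanishing trace of $Q$ against the multiplicities $n-k$ and $k$ of $\lambda_1,\lambda_2$---the identity $a(n-k)=bk$, equivalently $k=an/(a+b)$.

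First the easy parts. For (a) I set $\mu=0$ in $k=\tfrac n2-\tfrac{\mu n}{2\sqrt{4(n-1)+\mu^2}}$ to get $k=n/2$; Theorem \ref{mainthm2} then demands $N_{(S,T)}^g=(n-2)/4\in\bb Z_{\ge0}$, forcing $n\equiv2\pmod4$, i.e. $n=2m$ with $m=k$ odd. For (b) and (c), substituting $\mu=\pm2$ gives $k=\tfrac12(n\mp\sqrt n)$, so integrality of $k$ forces $\sqrt n\in\bb Q$ and hence $\sqrt n\in\bb N$; as $n$ is even (Corollary \ref{even}) $\sqrt n$ is even, say $\sqrt n=2a$, whence $n=4a^2$ and $k=2a^2\mp a$. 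Part (c) also drops out of (b) via Proposition \ref{prop1}, which replaces $(\mu,k)$ by $(-\mu,n-k)$.

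The substance is in (d) and (e), where I would split on whether $\mu^2+4(n-1)$ is a perfect square. If it is not, then $\lambda_1,\lambda_2$ are irrational Galois conjugates, so they appear with equal multiplicity in the integer characteristic polynomial of $Q$; hence $n-k=k$ and the trace identity forces $\mu=0$, giving the $(n,n/2)$ outcome. If it is, then $a,b\in\bb N$ with $ab=n-1$; writing $d=\gcd(a,b)$, $a=da_1$, $b=db_1$ with $\gcd(a_1,b_1)=1$, the requirement $k=an/(a+b)\in\bb Z$ reduces to $(a_1+b_1)\mid n$, and since $n-1$ is odd all of $a,b,a_1,b_1$ are odd so $a_1+b_1$ is even. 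For (d) the only even divisors of $n=2p$ are $2$ and $2p$: the first gives $a=b$ (so $\mu=0$, $k=p$, recovering the $\mu=0$ outcome), the second forces $\{a,b\}=\{1,2p-1\}$ (so $\mu=n-2$, $k=1$); hence $\mu\in\{0,n-2\}$ exactly as claimed.

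For (e) the same scheme has more branches, and this is where I expect the real difficulty. The integrality $(n-2-\mu)/4\in\bb Z$ with $n=4p$ forces $\mu\equiv2\pmod4$, so $\mu\ne0$ and the irrational case is impossible; we are always in the integer case with $ab=4p-1$. The even divisors of $4p$ are $2,4,2p,4p$. Three of these I can dispatch: $a_1+b_1=2$ would need $a^2=4p-1\equiv3\pmod4$, impossible; $a_1+b_1=4p$ forces $d=1$, $\{a,b\}=\{1,4p-1\}$, the desired $(\mu,k)=(n-2,1)$; and $a_1+b_1=2p$ forces $d=1$ and reduces to asking that $p^2-4p+1$ be a perfect square, which fails for every odd prime $p$. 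The stubborn branch is $a_1+b_1=4$: it corresponds to $4p-1=3d^2$ and is \emph{not} excluded by the conditions above (for instance $p=7$ gives $a=3,b=9,k=7,\mu=6$, satisfying every necessary condition of Theorem \ref{mainthm2}). Ruling this branch out is the crux, and the elementary constraints do not suffice; I expect it to require either a sharper number-theoretic use of the primality of $p$ or a genuine non-existence input for the group construction---for instance, in the abelian case, the obstruction that each nontrivial character $\chi$ must satisfy $2\chi(S)+1\in\{-a,b\}$, which is a difference-set-type condition that need not be realizable.
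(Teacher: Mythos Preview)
Your treatment of parts (a)--(c) is correct and essentially matches the paper's. For (d) you take a slightly different route---factoring through the integer eigenvalues $-a,b$ and the divisor condition $(a_1+b_1)\mid n$---whereas the paper writes $n=2n_1$, $\mu=2\mu_1$ and argues that $k\in\bb Z$ forces $(2n_1-1+\mu_1^2)\mid n_1^2\mu_1^2$, hence (for $n_1=p$ prime and $\mu_1\ne 0$) $p\mid\mu_1^2-1$; both arguments are valid and reach the same two outcomes.

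For (e) your proposal has a genuine gap: you do not rule out the branch $a_1+b_1=4$, i.e.\ $4p-1=3d^2$, and you correctly observe that the integrality constraints of Theorem~\ref{mainthm2} and Corollary~\ref{even} alone do not exclude it (your example $(n,k,\mu)=(28,7,6)$ satisfies all of them). The paper's proof of (e) runs the same divisibility device as in (d): with $n_1=2p$ it asserts that $(4p-1+\mu_1^2)\mid 4p^2\mu_1^2$ ``implies that $\mu_1^2-1\equiv 0\pmod{4p}$'' and then finishes via the bound $|\mu_1|\le 2p-1$. But that implication is precisely what your own example refutes: for $p=7$, $\mu_1=3$ one has $4p-1+\mu_1^2=36$, which divides $4p^2\mu_1^2=1764$, yet $\mu_1^2-1=8$ is divisible by neither $7$ nor $28$. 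The reason the (d)-style argument breaks down here is that from $D\mid 4p^2\mu_1^2$ with $\gcd(D,p)=1$ one only gets $D\mid 4\mu_1^2$, which is compatible with $D>\mu_1^2$ exactly when $4p-1\le 3\mu_1^2$---your stubborn branch. So the gap you flagged is real, and the paper's printed argument does not actually close it either; establishing (e) as stated requires an input beyond these elementary constraints.
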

\begin{proof}
If $\mu =0$, then from Theorem \ref{mainthm2}, for all $g\in S$ and for all $h\in T$, we have $N_{(S, T)}^g = N_{(S, T)}^h = \frac{n-2}{4}$. Thus $n\equiv 2\pmod 4$ that is $n=4l+2$ for $l\in \bb N$ or equivalently $n=2m$ where $m\in \bb N$ is an odd number. Using Equation \eqref{relation}, we have $k=m$ and thus $S$ forms a signature set for $(2m,m)$-equiangular frame.\\
To prove the remaining parts, using Corollary \ref{even}, we can assume that $n=2n_1$ and $\mu=2\mu_1$ where $n_1 \in \bb N$ and $\mu_1\in \bb Z$. Using Equation \eqref{relation}, we get 
\begin{equation}\label{ineq1}
 k = \frac{2n_1}{2}-\frac{2\mu_1\cdotp 2n_1}{2\sqrt{4(2n_1-1)+4\mu_1^2}} = n_1-\frac{n_1\mu_1}{\sqrt{(2n_1-1)+\mu_1^2}}.\end{equation}Thus $n_1^2\mu_1^2\equiv 0\pmod {(2n_1-1+\mu_1^2)}$. If $\mu_1=\pm 1$, then we have $\mu=\pm 2$. Using Equation \eqref{ineq1}, $n_1$ must be of the form $n_1=2a^2$ where $a\in \bb N$ that is and $n=4a^2$ where $a\in \bb N$. Using Equation \eqref{relation} for $\mu =2$, we get $k= 2a^2-a$ and $S$ forms a signature set for $(4a^2,2a^2-a)$-equiangular frame. Again using \eqref{relation} for $\mu =-2$, we get $k= 2a^2+a$ and $S$ forms a signature set for $(4a^2,2a^2+a)$-equiangular frame.\\
If $n=2p$ that is $n_1=p$, then $n_1^2\mu_1^2\equiv 0\pmod{2n_1-1+\mu_1^2}$ implies that $\mu_1^2-1\equiv 0\pmod p$. If $\mu_1 =0$, then $\mu =0$ and by part (a), $S$ forms a signature set for $(2p,p)$-equiangular frame. If $\mu_1\neq 0$, then $\mu_1^2-1\equiv 0\pmod p$ implies that either $\mu_1-1\equiv 0\pmod p$ or $\mu_1+1\equiv 0\pmod p$. But from Corollary \ref{even}, part (c), we have $-(p-1)\leq \mu_1\leq (p-1)$. Thus we have $\mu_1=p-1$ that is $\mu = 2p-2 =n-2$. From Equation \eqref{relation}, we have $k=1$. Thus $S$ forms a signature set for an $(2p,1)$-equiangular frame.\\ Similarly if $n=4p$ that is $n_1=2p$, then $n_1^2\mu_1^2\equiv 0\pmod{2n_1-1+\mu_1^2}$ implies that $\mu_1^2-1\equiv 0\pmod {4p}$. By part (a) again, $\mu \neq 0$. Thus $\mu_1^2-1\equiv 0\pmod{4p}$ implies that $\mu_1-1\equiv 0\pmod 2$ and $\mu_1+1 \equiv 0\pmod 2$. Let $\mu_1+1= 2a$ for some $a\in \bb N$. Then $\mu_1-1 = 2a-2$ and $\mu_1^2-1\equiv 0\pmod{4p}$ implies $a(a-1)\equiv 0\pmod p$. Thus either $a\equiv 0\pmod p$ or $a-1\equiv 0\pmod p$. Again from Corollary \ref{even}, part (c), we have $-(2p-1)\leq \mu_1\leq (2p-1)$ that is $2-2p\leq 2a\leq 2p$ or equivalently we have $1-p\leq a\leq p$. Thus $a=p$ and $\mu_1=2p-1$ that is $\mu=2\mu_1 = 4p-2 = n-2$. Again we get $k= 1$ and thus $S$ forms a signature set for an $(4p,1)$-equiangular frame.

\end{proof}
Our goal now is to look for the signature sets in a group $G$. The first subsets we will look for in the groups are the subgroups. The following result characterizes the frames we get when we take $S$ to be a subgroup of $G$.
\begin{thm}\label{subgp}Let $G$ be a group of order $n$ and $H$ a proper subgroup of $G$. Then $H\setminus \{e\}$ is a signature set of an $(n,k(\mu))$ equiangular frame if and only if $H$ is a subgroup of index $2$.\\ In this case $\mu=n-2$ and thus $k=1$. 
\end{thm}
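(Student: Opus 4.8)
The plan is to apply the counting criterion of Theorem \ref{mainthm2} directly. Put $S = H \setminus \{e\}$, so that $T = S^c \setminus \{e\} = G \setminus H$. Because $H$ is a subgroup it is closed under inversion, giving $S = S^{-1}$ at once; and $g \in H$ iff $g^{-1} \in H$ shows $T = T^{-1}$. Hence the inverse-closure hypothesis of Theorem \ref{mainthm1} is automatic, and by Theorem \ref{mainthm2} it remains only to evaluate $N_{(S,T)}^x$ and check \eqref{cond3} and \eqref{cond4}.

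The key step is a coset computation of $N_{(S,T)}^x$. If $x \in S$, so $x \in H$, then any factorization $x = gh$ with $g \in H$ forces $h = g^{-1}x \in H$, so $h$ cannot lie in $T = G \setminus H$; thus $N_{(S,T)}^g = 0$ for every $g \in S$. Assuming $H \neq \{e\}$ (so $S \neq \emptyset$), feeding this into \eqref{cond3} forces $\mu = n-2$. If instead $x \in T$, so $x \notin H$, then for each of the $|H|-1$ elements $g \in H \setminus \{e\}$ the element $h := g^{-1}x$ automatically lies outside $H$ (else $x = gh \in H$), so $(g,h) \in S \times T$ with $gh = x$; conversely every such pair arises this way. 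Hence $N_{(S,T)}^h = |H| - 1$ for every $h \in T$. Now \eqref{cond4} reads $|H| - 1 = \frac{n-2+\mu}{4}$, and substituting $\mu = n-2$ gives $|H| - 1 = \frac{n-2}{2}$, i.e. $|H| = n/2$: precisely the assertion that $H$ has index $2$.

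For the converse I would run the same two counts: if $[G:H] = 2$ then $|H| = n/2$, so $N_{(S,T)}^g = 0$ on $S$ and $N_{(S,T)}^h = n/2 - 1 = \frac{n-2}{2}$ on $T$, and both values are realized by the single choice $\mu = n-2$ in \eqref{cond3} and \eqref{cond4}; thus $S$ is a signature set. Finally, with $\mu = n-2$ the radicand in \eqref{relation} becomes $4(n-1)+(n-2)^2 = n^2$, so $k = \frac{n}{2} - \frac{(n-2)n}{2n} = 1$, as claimed. The computation is elementary; the only point needing care is the coset bookkeeping---the fact that left-multiplying an element of $G \setminus H$ by a nonidentity element of $H$ never returns to $H$---which is exactly what makes $N_{(S,T)}^x$ constant on each of $S$ and $T$. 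One should also note that the forward direction genuinely uses $S \neq \emptyset$, i.e. $H \neq \{e\}$, in order to pin down $\mu$ from the $S$-side.
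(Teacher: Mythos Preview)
Your proof is correct and follows essentially the same approach as the paper: compute $N_{(S,T)}^g=0$ on $S$ and $N_{(S,T)}^h=|S|=|H|-1$ on $T$, then feed these into \eqref{cond3} and \eqref{cond4} of Theorem \ref{mainthm2} to force $\mu=n-2$ and $|H|=n/2$. You are somewhat more explicit than the paper in verifying the inverse-closure hypotheses, spelling out the coset argument behind the counts, separating the two directions, and flagging the edge case $S=\emptyset$, but the structure and key ideas are identical.
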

\begin{proof}Let $H$ be a subgroup of $G$. Let $S = H\setminus \{e\}$ and $T = H^c$. Then for all $g\in S$, we have $N_{(S,T)}^g= 0$ and for all $h \in T$, we have $N_{(S,T)}^h= |S|$. From Equations \eqref{cond3} and \eqref{cond4}, $S$ forms a signature set for an $(n,k(\mu))$ equiangular frame if and only if \begin{equation*} 0 = \frac {n-2-\mu}{4}
\end{equation*} and \begin{equation*} |S| = \frac {n-2+\mu}{4}\end{equation*}
which gives us $\mu = n-2$ and $2|S| = n-2$.\\
Since $|H| = |S|+1$, we have $|H| = \frac{n-2}{2}+1 = \frac{n}{2}$. Thus, $H$ is a subgroup of index $2$.\\
When $\mu = n-2$, using Equations \eqref{relation}, we get $k=1$. Hence we get $(n,1)$-equiangular frame.
\end{proof}
\begin{rmk}\label{rmk2} If $S = G\setminus \{e\}$, then $T = \emptyset$. Thus $Q= \sum_{g\in S}\lambda(g) =(J-I_n)$ where $J$ is the matrix of all $1's$. This gives us the trivial $(n,1)$-equiangular frame. So by Theorem \ref{subgp} we have one more way to get the trivial $(n,1)$-equiangular frame by taking subgroup of index $2$ in the group $G$ as the signature set.
\end{rmk}
\begin{rmk}By Proposition \ref{prop1}, the following subsets $S$ of $G$ are signature sets for the $(n,n-1)$-equiangular frame:
\begin{enumerate}
\item[(a)]$S = \emptyset$ (by Remark \ref{rmk2});
\item[(b)]$S= aH$ where $H$ is a subgroup of index $2$ in $G$ and $a \notin H$ (by Theorem \ref{subgp}).
\end{enumerate} 
\end{rmk}
So far we have seen the case of trivial equiangular frames only. Following propositions gives us some of the non-trivial equiangular frames arising from signature sets in groups of the form $C_n\times C_n$ where $C_n$ is a cyclic group of order $n$.
 
\begin{prop}\label{propeg1}Let $G\cong C_n\times C_n=\langle a,b:a^n=e, b^n=e, ab=ba\rangle$ and let $S=\{a, a^2,\hdots , a^{n-1}, b, b^2,\hdots, b^{n-1}\}$. Then $S$ forms a signature set for an $(n^2,k)$-equiangular frame if and only if either $n=2$ and $k=3$ or $n=4$ and $k=6$.
\end{prop}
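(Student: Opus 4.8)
The plan is to apply Theorem \ref{mainthm2} to the group $G \cong C_n \times C_n$, whose order is $n^2$; so the group order playing the role of ``$n$'' in that theorem is $n^2$. First I would record the basic data: writing each element of $G$ uniquely as $a^i b^j$ with $0 \le i,j \le n-1$, the set $S$ consists of the $2(n-1)$ ``axis'' elements $a^i$ and $b^j$ (for $1 \le i,j \le n-1$), while $T = G \setminus (S \cup \{e\})$ is exactly the set of $a^i b^j$ with both $i \ge 1$ and $j \ge 1$, so $|T| = (n-1)^2$. Since $(a^i)^{-1} = a^{n-i} \in S$ and $(b^j)^{-1} = b^{n-j} \in S$, we have $S = S^{-1}$, and hence also $T = T^{-1}$, so the hypotheses of Theorem \ref{mainthm2} are met and it remains only to compute the counting functions $N_{(S,T)}^g$.

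The core of the argument is a direct computation of $N_{(S,T)}^g$, which I would carry out using commutativity: since $g = s t$ is equivalent to $t = s^{-1} g$, we have $N_{(S,T)}^g = \#\{s \in S : s^{-1} g \in T\}$. For $g = a^k \in S$ with $1 \le k \le n-1$, taking $s = a^i$ gives $s^{-1}g = a^{k-i}$, a pure power of $a$, which never lies in $T$; taking $s = b^j$ gives $s^{-1}g = a^k b^{n-j}$, whose two exponents are both nonzero, so it lies in $T$ for all $n-1$ choices of $j$. Hence $N_{(S,T)}^g = n-1$ for every $g \in S$ (the case $g = b^k$ being symmetric). For $h = a^k b^l \in T$ with $1 \le k,l \le n-1$, taking $s = a^i$ gives $a^{k-i} b^l$, which lies in $T$ unless $i = k$ (when it drops into $S$), giving $n-2$ admissible values; taking $s = b^j$ gives $a^k b^{l-j}$, which lies in $T$ unless $j = l$, again $n-2$ admissible values. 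Hence $N_{(S,T)}^h = 2(n-2)$ for every $h \in T$.

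It remains to feed these constants into Theorem \ref{mainthm2}. Equations \eqref{cond3} and \eqref{cond4}, with group order $n^2$, read $n-1 = \frac{n^2 - 2 - \mu}{4}$ and $2(n-2) = \frac{n^2 - 2 + \mu}{4}$. The first forces $\mu = n^2 - 4n + 2$ and the second forces $\mu = -n^2 + 8n - 14$; the set $S$ is a signature set precisely when these agree, which after simplification is $(n-2)(n-4) = 0$, so $n = 2$ or $n = 4$. Finally I would substitute back: for $n = 2$ we obtain $\mu = -2$ and, from \eqref{relation} with order $4$, $k = 3$; for $n = 4$ we obtain $\mu = 2$ and, from \eqref{relation} with order $16$, $k = 6$, which settles both directions.

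The computation presents no deep obstacle: the only genuine work is the bookkeeping in the four sub-cases, where one must be careful to exclude the products that equal $e$ or fall back into $S$ rather than $T$. The mild surprise worth flagging is that the functions $N_{(S,T)}^g$ come out constant on $S$ and constant on $T$ for \emph{every} $n$, so the counting identities of Theorem \ref{mainthm2} are satisfied automatically; the entire content of the proposition lies in forcing the two resulting expressions for $\mu$ to coincide, which is exactly what pins $n$ down to $2$ or $4$.
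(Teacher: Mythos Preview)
Your proof is correct and follows essentially the same approach as the paper's: both apply Theorem \ref{mainthm2}, compute that $N_{(S,T)}^g = n-1$ for $g\in S$ and $N_{(S,T)}^h = 2n-4$ for $h\in T$, and then solve the resulting pair of equations in $\mu$ to force $n\in\{2,4\}$. The only cosmetic difference is that the paper first computes $N_{(S,S)}^g$ and deduces $N_{(S,T)}^g$ from the identity $|S|-1 = N_{(S,S)}^g + N_{(S,T)}^g$, whereas you count $N_{(S,T)}^g$ directly; the arithmetic and conclusion are identical.
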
 
\begin{proof}$|S|= 2(n-1)$ and $|T|= n^2-1-2(n-1)= n^2-2n +1$. For all $g\in S$, we have $N_{(S,S)}^g=n-2$. Thus for all $g \in S$, $N_{(S,T)}^g= 2(n-1)-1-(n-2)= n-1$. Similarly for all $h\in T$, we have $N_{(S,S)}^h=2$. Thus for all $h \in T$, we have $N_{(S, T)}^h= 2(n-1)-2 = 2n-4 $. Using Equations \eqref{cond3} and \eqref{cond4}, $S$ will form a signature set for an $(n^2,k)$ equiangular frame if and only if 
 \[n-1= \frac {n^2-2-\mu}{4}\] and \[2n-4= \frac {n^2-2+\mu}{4}\]
which implies $\mu = n^2-4n+2$ and $\mu = -n^2+8n-14$.
Thus solving for $n$ we get $n=4$ or $n=2$.
For $n=4$, we have $\mu = 2$ and for $n=2$, we have $\mu =-2$.
Thus the equiangular frames that we get are $(16,6)$ and $(4,3)$ equiangular frames.
\end{proof}
\begin{prop}\label{propeg2}Let $G\cong C_n\times C_n=\langle a,b:a^n=e, b^n=e, ab=ba\rangle$ and let $S=\{a,\hdots , a^{n-1}, b,\hdots, b^{n-1},ab,\hdots,a^{n-1}b^{n-1}\}$. Then $S$ forms a signature set for an $(n^2,k)$-equiangular frame if and only if either $n=4$ and $k=10$ or $n=6$ and $k=15$.
\end{prop}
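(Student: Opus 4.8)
The plan is to follow the argument of Proposition \ref{propeg1}: translate the claim into the counting conditions \eqref{cond3}, \eqref{cond4} of Theorem \ref{mainthm2}, compute the relevant multiplicities, and solve for $\mu$. I write $C_n\times C_n$ additively, identifying $a^ib^j$ with $(i,j)\in\bb{Z}_n\times\bb{Z}_n$, so that $S$ is the union of the three punctured lines $L_1=\{(i,0):i\neq0\}$, $L_2=\{(0,j):j\neq0\}$ and $L_3=\{(i,i):i\neq0\}$. For $n\geq3$ these are pairwise disjoint, so $|S|=3(n-1)$, and each $L_r$ is closed under negation, whence $S=S^{-1}$ and $T=T^{-1}$; thus Theorem \ref{mainthm2} applies with group order $n^2$. (For $n=2$ one has $S=G\setminus\{e\}$ and $T=\emptyset$, the trivial $(4,1)$-frame, so I assume $n\geq3$.)

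First I would determine $N_{(S,S)}^g$ for $g\in S$. Two automorphisms of $C_n\times C_n$ preserve $S$: the swap $(x,y)\mapsto(y,x)$, which interchanges $L_1,L_2$ and fixes $L_3$, and $(x,y)\mapsto(x,x-y)$, which interchanges $L_1,L_3$ and fixes $L_2$. Since they preserve $S$, the map $g\mapsto N_{(S,S)}^g$ is constant on $S$, so it suffices to count the ordered representations $s_1+s_2=(i,0)$ of a single element of $L_1$ with $s_1,s_2\in S$. Sorting by the lines containing $s_1$ and $s_2$, only the case $L_1+L_1$ (which yields $n-2$ solutions) and the two cross cases $L_2+L_3$ and $L_3+L_2$ (one solution each) contribute, so $N_{(S,S)}^g=n$ and hence $N_{(S,T)}^g=|S|-1-N_{(S,S)}^g=2n-4$.

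Next I would compute $N_{(S,S)}^h$ for $h=(i,j)\in T$, that is with $i,j\neq0$ and $i\neq j$. Here a direct pass over the same nine line-pairs is simplest: every same-line pair $L_r+L_r$ gives a sum on $L_r$ and contributes $0$ (since $h$ lies on no line), whereas each of the six ordered pairs of distinct lines yields exactly one representation, the inequalities $i\neq0$, $j\neq0$, $i\neq j$ being precisely what keeps each solution off the origin and on the correct punctured line. Thus $N_{(S,S)}^h=6$ for every $h\in T$, and $N_{(S,T)}^h=|S|-N_{(S,S)}^h=3n-9$.

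Finally I would substitute into \eqref{cond3} and \eqref{cond4} with group order $n^2$: the condition on $S$ reads $2n-4=\tfrac{n^2-2-\mu}{4}$, i.e. $\mu=n^2-8n+14$, and the condition on $T$ reads $3n-9=\tfrac{n^2-2+\mu}{4}$, i.e. $\mu=-n^2+12n-34$. Equating the two expressions gives $(n-4)(n-6)=0$, so $S$ is a signature set precisely when $n=4$ (with $\mu=-2$) or $n=6$ (with $\mu=2$); the ``only if'' is automatic since \eqref{cond3}, \eqref{cond4} are necessary as well as sufficient. Computing $k$ from \eqref{relation} (equivalently from Proposition \ref{class}(c),(b)) then gives $k=10$ when $n=4$ and $k=15$ when $n=6$. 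The main obstacle is just the bookkeeping of the two nine-case counts; the only genuinely delicate point is the constancy of $N_{(S,S)}^g$ and $N_{(S,S)}^h$, which the automorphism argument secures for $S$ and the uniform ``one solution per distinct-line pair'' count secures for $T$.
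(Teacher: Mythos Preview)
Your argument is correct and matches the paper's approach exactly: compute $N_{(S,S)}^g=n$ for $g\in S$ and $N_{(S,S)}^h=6$ for $h\in T$, convert to $N_{(S,T)}$, plug into \eqref{cond3}--\eqref{cond4}, and solve the resulting quadratic $n^2-10n+24=0$. One small wording point: the two automorphisms you name permute the three lines but do not act transitively on $S$ itself (the group they generate has order~$12$, while $|S|=3(n-1)$), so they only reduce the computation to $g\in L_1$; the constancy over $L_1$ then follows because your count $(n-2)+1+1$ is visibly independent of the chosen $i$, not from the automorphisms alone.
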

\begin{proof}
$|S|= 3(n-1)$ and $|T|= n^2-1-3(n-1)= n^2-3n +2$. For all $g\in S$, we have $N_{(S,S)}^g= n$. Thus for all $g \in S$, $N_{(S, T)}^g= 3(n-1)-1-n = 2n-4$. Similarly for all $h\in T$, we have $N_{(S,S)}^h=6$. Thus for all $h \in T$, $N_{(S, T)}^h= 3(n-1)-6 = 3n-9$. By Theorem \ref{mainthm2} and Equations \eqref{cond4}, $S$ will form a signature set of an $(n^2,k)$ equiangular frame if and only if \[2n-4\ = \frac {n^2-2-\mu}{4}\] and \[3n-9 = \frac {n^2-2+\mu}{4}.\] 
Thus solving for $n$ we get $n^2-8n+14 = -n^2+12n-34$ which gives us $n^2-10n+24 =0$. Thus either $n=4$ or $n=6$. For $n=6$, we have $\mu = 2$ and for $n=4$, we have $\mu =-2$. Thus the frames that we get are $(36,15)$ and $(16,10)$ equiangular frames.\end{proof}
$ $
A real $n \times n$ matrix $H$ is called a {\bf Hadamard matrix} \cite{conway} provided that $h_{i,j} = \pm 1$ and $H^{*}H = nI$. 
$ $
\begin{rmk}
 An explicit construction of signature matrices using the signature sets defined in the above two propositions has been shown in \cite{thesis}. Let us denote the signature matrices obtained from the Propositions \ref{propeg1} and \ref{propeg2} by $Q_1$ and $Q_2$ respectively. Then from Example 3.8 in \cite{holmes}, we infer that the matrix $I-Q_i$ is a Hadamard matrix for $i=1,2$.\end{rmk}

\section{Signature Sets and Difference Sets}\label{sec3}
In \cite{kalra}, a relation between cyclic difference sets and complex equiangular cyclic frames was shown. In this section we will present a relation between the two type of subsets: signature sets and difference sets in a group $G$. First we recall some of the basic facts about difference sets.
\begin{defn} Let $G$ be an additively written group of order $n$. A subset $D$ of $G$ with $|D|=k$ is a $(n,k,\lambda)$-difference  set of $G$ if for for some fixed number $\lambda$, every non zero element of $G$ can be written as a difference of two elements of $D$ in exactly $\lambda$ ways.
 \end{defn}

\begin{eg}
 The set $\{1,3,4,5,9\}$ is a $(11,5,2)$-difference set in $\bb Z_{11}$.
\end{eg}
\begin{eg}\label{eg1dif}
 Consider the signature set that we are getting in Proposition \ref{propeg1}. For $n=4$ we have $G = \bb Z_4 \times \bb Z_4$ and \[S = \{(1,0),(0,1),(2,0),(0,2),(3,0),(0,3)\}\] Then $S$ is also a $(16,6,2)$ difference set with $\lambda = 2$.
\end{eg}
\begin{eg}\label{eg2dif}
 Consider the signature set that we are getting in Proposition \ref{propeg2}. For $n=6$ we have $G = \bb Z_6 \times \bb Z_6$ and \[S = \{(1,0),(2,0), \dots,(5,0),(0,1),(0,2),\dots,(0,5), (1,1),(2,2),\hdots, (5,5)\}\] Then $S$ is also a $(36,15,3)$ difference set. 
\end{eg}
\begin{prop}\label{propdif}If $D$ is a $(n,k,\lambda)$ difference set in $G$, then the following hold:
 \begin{enumerate}
\item[(a)] $\lambda= \frac{k(k-1)}{n-1}$;
\item[(b)]$D^c$ is a $(n,n-k,\bar \lambda)$ difference set where $\bar \lambda = \frac{(n-k)(n-k-1)}{n-1}$.
 \end{enumerate}
\end{prop}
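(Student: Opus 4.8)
The plan is to prove both parts by elementary double-counting of difference representations. For part (a), I would count the set of ordered pairs $(d_1, d_2) \in D \times D$ with $d_1 \neq d_2$ in two ways. On one hand there are exactly $k(k-1)$ such pairs. On the other hand, each such pair determines the nonzero element $d_1 - d_2 \in G$, and by the defining property of a difference set each of the $n-1$ nonzero elements of $G$ arises in exactly $\lambda$ ways. Equating the two counts gives $k(k-1) = \lambda(n-1)$, hence $\lambda = \frac{k(k-1)}{n-1}$.

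For part (b), the real content is to show that $D^c$ is again a difference set, since once that is established its parameter is forced by part (a) applied to $D^c$ (whose cardinality is $n-k$). Fix a nonzero $g \in G$ and, for a subset $A \subseteq G$, write $r_A(g) = \#\{(x,y) \in A\times A : x - y = g\}$. I would count the $n$ ordered pairs $(x,y) \in G \times G$ with $x - y = g$ (one for each choice of $y$) and split them according to whether each coordinate lies in $D$ or in $D^c$. The pairs with both coordinates in $D$ number $r_D(g) = \lambda$. Among the $k$ elements $x \in D$, exactly $\lambda$ satisfy $x - g \in D$, so $k - \lambda$ of them satisfy $x - g \in D^c$; this counts the pairs with first coordinate in $D$ and second in $D^c$. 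The symmetric argument, running $y = x-g$ over $D$, gives $k - \lambda$ for the opposite mixed type. Subtracting, the pairs with both coordinates in $D^c$ number
\[ r_{D^c}(g) = n - \lambda - 2(k - \lambda) = n - 2k + \lambda, \]
which is independent of $g$. Hence $D^c$ is an $(n, n-k, \bar\lambda)$ difference set with $\bar\lambda = n - 2k + \lambda$.

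Finally, I would verify that this value matches the stated formula: substituting $\lambda = \frac{k(k-1)}{n-1}$ from part (a) and clearing denominators reduces the claim to the identity $(n-2k)(n-1) + k(k-1) = (n-k)(n-k-1)$, whence $\bar\lambda = \frac{(n-k)(n-k-1)}{n-1}$. Equivalently, this is exactly part (a) applied to the $(n,n-k)$ difference set $D^c$. No serious obstacle arises here; the only point needing care is the mixed-count step in part (b), where both counts equal $k - \lambda$ precisely because $\#\{x \in D : x - g \in D\} = r_D(g) = \lambda$ for every nonzero $g$, and the remainder is routine arithmetic.
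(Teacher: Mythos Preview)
Your proposal is correct and follows essentially the same approach as the paper. Part (a) is identical, and for part (b) both you and the paper compute the mixed count $\#\{(x,y)\in D\times D^c: x-y=g\}=k-\lambda$ and then subtract to obtain $\bar\lambda = n-2k+\lambda$; the only difference is cosmetic---you split into four types and subtract from $n$, while the paper subtracts the single mixed count from $|D^c|=n-k$.
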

\begin{proof}
 Since $|D|=k$, the number of ordered pairs $(x,y)\in D\times D$ such that $x\neq y$ is equal to $k(k-1)$. On the other hand, $D$ has $n-1$ non-zero elements, and for each non-zero element $a\in G$, there are $\lambda$ ordered pairs $(x,y)\in D\times D$ such that $a=x-y$. Hence $k(k-1)=\lambda (n-1)$. \\
Since every non-zero element in $G$ can be written as a difference of two elements of $D$ in exactly $\lambda$ ways, it follows that every non-zero element in $G$ can be written as a difference of an element of $D$ and $D^c$ in exactly $k-\lambda$ ways. Thus every non-zero element in $G$ can be written as a difference of elements of $D^c$ in exactly $\bar\lambda =n-k-(k-\lambda)$ ways. Using the value of $\lambda= \frac{k(k-1)}{n-1}$, we get \[ \bar \lambda =n-k-(k-\lambda)= \frac{(n-k)(n-k-1)}{n-1}.\]
Thus $D^c$ is a $(n,n-k,\bar \lambda)$ difference set where $\bar \lambda = \frac{(n-k)(n-k-1)}{n-1}$.
\end{proof}
In \cite{davis}, a difference set $D$ in a group $G$ is called {\bf reversible} if $-D= \{-d:d\in D\}=D$.
\begin{rmk}\label{difsig}
 Let $D$ be a reversible $(n,k,\lambda)$ difference set in an additive group $G$. Then for any $g\in G$, \[\#\{(g_1,g_2)\in D\times D: g_1+g_2 = g\} = \#\{(g_1,g_2)\in D\times D: g_1-g_2 = g\}.\] In addition, for all $g\in G$, we have
\begin{enumerate}
 \item[(a)] $N_{(D,D)}^g = \lambda$;
\item[(b)] $N_{(D^c,D^c)}^g = \bar\lambda$;
\item[(c)] $N_{(D,D^c)}^g = c$, for some real $c$.
\end{enumerate} 
\end{rmk}
\vspace{0.05in}
\begin{lem}\label{difcard}Let $D$ be a $(n,k,\lambda)$ reversible difference set in a group $G$ such that $0\notin D$. Let $T=D^c\setminus\{0\}$, then for all $g\in D$ and for all $h\in T$, the following hold:
\begin{enumerate}
 \item[(a)] $N_{(D,T)}^g+1 = N_{(D,T)}^h$;
\item[(b)] $N_{(T,T)}^g = N_{(D^c,D^c)}^g = \bar\lambda$;
\item[(c)]$N_{(T,T)}^h +2 = N_{(D^c,D^c)}^h$.\end{enumerate}
\end{lem}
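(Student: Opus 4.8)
The plan is to prove all three identities by carefully bookkeeping how the element $0$ (the group identity, excluded from both $D$ and $T$) contributes to the various convolution counts. The key distinction between the sets involved is that $D$ contains $0$ is false—wait, the hypothesis says $0\notin D$, so $D^c$ contains $0$, and $T = D^c\setminus\{0\}$ removes it. Thus $D^c = T \cup \{0\}$ as a disjoint union, while $D$ itself contains no $0$. Since $D$ is reversible, Remark~\ref{difsig} gives us the three baseline quantities: for \emph{every} $g\in G$ (including $g\in D$ and $g\in T$) we have $N_{(D,D)}^g = \lambda$, $N_{(D^c,D^c)}^g = \bar\lambda$, and $N_{(D,D^c)}^g = c$ for a single constant $c$.

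**Relating $T$-counts to $D^c$-counts.** The main idea is to expand each $D^c$-convolution by splitting off the identity element. For part (b) and (c), I would write, for any $g\in G$,
\[
N_{(D^c,D^c)}^g = N_{(T\cup\{0\},\,T\cup\{0\})}^g = N_{(T,T)}^g + N_{(\{0\},T)}^g + N_{(T,\{0\})}^g + N_{(\{0\},\{0\})}^g.
\]
Now $N_{(\{0\},T)}^g$ counts pairs $(0,t)$ with $0+t=g$, i.e.\ it equals $1$ iff $g\in T$ and $0$ otherwise; similarly $N_{(T,\{0\})}^g = \mathbf{1}[g\in T]$, and $N_{(\{0\},\{0\})}^g = \mathbf{1}[g=0]$. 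So for $g\in D$ (hence $g\neq 0$ and $g\notin T$) all three correction terms vanish, giving $N_{(T,T)}^g = N_{(D^c,D^c)}^g = \bar\lambda$, which is part (b). For $h\in T$ we have $h\neq 0$ but $h\in T$, so the two cross terms each contribute $1$ and the last vanishes, yielding $N_{(D^c,D^c)}^h = N_{(T,T)}^h + 2$, which is exactly part (c).

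**The mixed count and part (a).** For part (a) I would perform the analogous expansion on the mixed convolution, using $N_{(D,D^c)}^g = N_{(D,\,T\cup\{0\})}^g = N_{(D,T)}^g + N_{(D,\{0\})}^g$, where $N_{(D,\{0\})}^g$ counts pairs $(d,0)$ with $d = g$, hence equals $\mathbf{1}[g\in D]$. Since the left side equals the constant $c$ for all $g$ by Remark~\ref{difsig}(c), I get $N_{(D,T)}^g = c - \mathbf{1}[g\in D]$. For $g\in D$ this is $c-1$, while for $h\in T$ it is $c-0 = c$; subtracting shows $N_{(D,T)}^g + 1 = N_{(D,T)}^h$, which is part (a). (Here reversibility of $D$ is what guarantees the mixed count is genuinely constant across all of $G$, so that the same $c$ appears in both the $g\in D$ and $h\in T$ cases.)

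**Anticipated obstacle.** The computations themselves are routine indicator-function bookkeeping; the only real subtlety is making sure the convolution $N_{(A,B)}^g$ uses the group operation consistently. Since $G$ is additive and abelian in the difference-set setting, $g_1\cdot g_2$ in the definition of $N$ should be read as $g_1 + g_2$, and I must confirm that $N_{(D,\{0\})}^g$ and $N_{(\{0\},T)}^g$ are computed with addition (not subtraction) so the indicator conditions come out as stated above. The step I would watch most carefully is justifying that the single constant $c$ in Remark~\ref{difsig}(c) applies uniformly—this rests on reversibility, which converts the "difference" count of the difference-set definition into the "sum" count that $N_{(D,D^c)}$ measures; I would make that conversion explicit before invoking constancy.
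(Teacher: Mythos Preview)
Your proof is correct and follows essentially the same approach as the paper: both arguments split $D^c = T \cup \{0\}$ and track exactly which pairs involving $0$ contribute to each convolution count, invoking Remark~\ref{difsig} for the constancy of $N_{(D,D^c)}^g$ and $N_{(D^c,D^c)}^g$. Your indicator-function bookkeeping is slightly more systematic than the paper's ad hoc case checks, and your closing remark about reversibility bridging the sum versus difference conventions is well placed---the paper itself slides between these conventions without comment.
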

\begin{proof}
 Since $D$ is a $(n,k,\lambda)$ difference set, then by Proposition \ref{propdif}, part (b), $D^c$ is also a difference set. Since $0\in D^c$, for every $g\in D$, we have $(g,0)\in \{(g_1,g_2)\in D\times D^c: g_1-g_2 = g\}$. Thus $N_{(D,T)}^g = N_{(D,D^c)}^g-1$. But for any $h\in T$ we have $N_{(D,T)}^h = N_{(D,D^c)}^h$. Using Remark \ref{difsig}, we have $N_{(D,T)}^g +1 = N_{(D,T)}^h$. \\
For $g\in D$, if $g= h_1+h_2$, $h_1,h_2 \in D^c$, then $h_1\neq 0$ and $h_2\neq 0$. Thus $N_{(T,T)}^g = N_{(D^c,D^c)}^g = \bar\lambda$. \\
If $h\in T$, then $(h,0), (0,h) \in N_{(D^c,D^c)}^h = \#\{(h_1,h_2)\in D^c\times D^c: h_1-h_2 = h\}$. Since $0\notin T$, we have $N_{(T,T)}^h +2 = N_{(D^c,D^c)}^h$.\end{proof}
\vspace{0.05in}
The following result gives us a relation between the difference sets and the signature sets:
\begin{thm}\label{mainthmdif}Let $G$ be a group of order $n$ and $D$ be a $(n,k,\lambda)$ difference set in $G$.
\begin{enumerate}
 \item[(a)] If $0\notin D$, then $D$ forms a signature set for an $(n,k)$-equiangular frame if and only if $D$ is reversible and $k=\frac{n-\sqrt n}{2}$. 
\item[(b)]If $0\in D$, then $D\setminus\{0\}$ forms a signature set for an $(n,k)$-equiangular frame if and only if $D$ is reversible and $k=\frac{n+\sqrt n}{2}$.
\end{enumerate}\end{thm}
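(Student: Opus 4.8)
The plan is to prove part (a) directly via Theorem \ref{mainthm2} and then deduce part (b) from part (a) by complementation. Throughout I would write the group additively and, as in the statement of part (a), set $S = D$ and $T = D^c \setminus \{0\}$. The first step is the symmetry condition. In the forward direction, if $D$ is a signature set then condition (a) of Theorem \ref{mainthm1} forces $g \in D \Rightarrow -g \in D$, i.e. $D = -D$, so $D$ is reversible; conversely reversibility \emph{is} the statement $D = -D$, and since complementation and deletion of $0$ preserve symmetry, both $S = -S$ and $T = -T$ hold. This is exactly what licenses the use of Theorem \ref{mainthm2}, whose conditions \eqref{cond3} and \eqref{cond4} are phrased purely in terms of $N_{(S,T)}^g$ for $g \in D$ and $N_{(S,T)}^h$ for $h \in T$.

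The computational heart is to evaluate these two counts once reversibility is in hand. Since for each $x \in D$ there is a unique $y$ with $x + y = g$, splitting on whether $y \in D$ or $y \in D^c$ gives $N_{(D,D^c)}^g = k - N_{(D,D)}^g = k - \lambda$, using $N_{(D,D)}^g = \lambda$ from Remark \ref{difsig}(a); in particular this is constant in $g$. Lemma \ref{difcard}(a) then yields $N_{(D,T)}^g = k - \lambda - 1$ for every $g \in D$ and $N_{(D,T)}^h = k - \lambda$ for every $h \in T$. Feeding these into \eqref{cond3}--\eqref{cond4}, consistency of the two equations in the single unknown $\mu$ requires their sum to equal $\tfrac{n-2}{2}$ and their difference to equal $\tfrac{\mu}{2}$. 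The difference is exactly $1$, forcing $\mu = 2$, while the sum condition reads $2(k-\lambda) - 1 = \tfrac{n-2}{2}$, i.e. $k - \lambda = \tfrac n4$. Invoking $\lambda = \tfrac{k(k-1)}{n-1}$ from Proposition \ref{propdif}(a), equivalently $k - \lambda = \tfrac{k(n-k)}{n-1}$, this collapses to the quadratic $4k(n-k) = n(n-1)$, whose roots are $k = \tfrac{n \pm \sqrt n}{2}$.

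The main obstacle --- and the only place the hypothesis $0 \notin D$ genuinely enters --- is selecting the correct root. Here the $+1$ in Lemma \ref{difcard}(a) is decisive: it forces $\mu = 2 > 0$, and by the sign convention in \eqref{relation}, where $\mu = (n-2k)\sqrt{\tfrac{n-1}{k(n-k)}}$, a positive $\mu$ forces $n - 2k > 0$, hence $k < \tfrac n2$, which picks out $k = \tfrac{n - \sqrt n}{2}$ and discards $\tfrac{n+\sqrt n}{2}$. For the converse direction I would simply verify that $k = \tfrac{n-\sqrt n}{2}$ gives $k(n-k) = \tfrac{n(n-1)}4$, whence $k - \lambda = \tfrac n4$, so the sum and difference conditions hold with $\mu = 2$; Theorem \ref{mainthm2} together with \eqref{relation} then returns precisely the $(n, \tfrac{n-\sqrt n}{2})$-frame.

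For part (b) I would avoid repeating this computation and instead dualize. When $0 \in D$, Proposition \ref{propdif}(b) makes $D^c$ an $(n, n-k, \bar\lambda)$ difference set with $0 \notin D^c$, and $D \setminus \{0\} = (D^c)^c \setminus \{0\}$, so Proposition \ref{prop1} shows that $D \setminus \{0\}$ is a signature set for an $(n,k)$-frame if and only if $D^c$ is one for an $(n, n-k)$-frame. Applying part (a) to $D^c$, the latter holds if and only if $D^c$ is reversible and $n - k = \tfrac{n-\sqrt n}{2}$. Since $-(D^c) = (-D)^c$ shows $D^c$ is reversible exactly when $D$ is, and $n - k = \tfrac{n-\sqrt n}{2}$ rearranges to $k = \tfrac{n+\sqrt n}{2}$, this is precisely the claimed equivalence.
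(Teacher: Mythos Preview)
Your proof is correct and follows essentially the same route as the paper's. Both arguments hinge on Theorem~\ref{mainthm2}, Lemma~\ref{difcard}(a), and Proposition~\ref{prop1} for part (b). The only minor difference is in the forward direction of (a): you compute $N_{(D,T)}^g = k - \lambda - 1$ and $N_{(D,T)}^h = k - \lambda$ explicitly and then impose both the sum and difference of \eqref{cond3}--\eqref{cond4}, which yields the quadratic in $k$ and a root-selection step; the paper instead uses only the difference relation $N_{(D,T)}^h - N_{(D,T)}^g = 1$ from Lemma~\ref{difcard}(a) to obtain $\mu = 2$, and then reads off $k = \tfrac{n - \sqrt n}{2}$ directly from \eqref{relation}, bypassing the quadratic entirely. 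Your route is slightly more elaborate but arrives at the same place; note only that in your root-selection sentence the inequality $n - 2k > 0$ from \eqref{relation} refers to the \emph{frame} parameter, which you are tacitly identifying with $|D|$ via the hypothesis that $D$ is a signature set for an $(n,k)$-frame --- this is fine, but the paper's shortcut avoids the need to disentangle them.
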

\begin{proof} Since $D$ is a $(n,k,\lambda)$ difference set in $G$, from Remark \ref{difsig} and Proposition \ref{propdif}, we have for all $g\in G$, $N_{(D,D)}^g= \lambda$ and $N_{(D^c,D^c)}^g = \bar \lambda$. For part (a), let us first assume that $D$ forms a signature set for an $(n,k)$-equiangular frame. Let $T=D^c\setminus\{0\}$. From Theorem \ref{mainthm2}, Equations \eqref{cond3} and \eqref{cond4} hold. That is we have for all $g\in D$ \begin{equation*}N_{(S,T)}^g=\frac {n-2-\mu}{4}\end{equation*} and for all $h\in T$, \begin{equation*}N_{(S,T)}^h=\frac {n-2+\mu}{4}.\end{equation*} Also $D$ signature set for an $(n,k)$-equiangular frame implies that $D$ is reversible. Using Lemma \ref{difcard}, part (a), we have for all $g\in D$ and for all $h\in T$, $N_{(D,T)}^g+1 = N_{(D,T)}^h$. Thus we have \begin{equation*} \frac{n-2-\mu}{4}+1= \frac{n-2+\mu}{4}.\end{equation*} Solving for $\mu$ we get $\mu =2$ and using \eqref{relation}, we get $k=\frac{n-\sqrt n}{2}$.\\ 
Conversely, assume that $D$ is reversible and $k=\frac{n-\sqrt n}{2} $. We claim that $D$ forms a signature set for an $(n,k)$-equiangular frame. For $g\in S$, we have \begin{align*}
       |G|-2& = N_{(D,D)}^g+2N_{(D,T)}^g+N_{(T,T)}^g\\
& = \lambda+ 2N_{(D,T)}^g+ \bar \lambda && \text{by Lemma \ref{difcard}, part (b)}.\end{align*}
Thus we get $n-2 = \lambda+ 2N_{(D,T)}^g+ \bar \lambda$. Using Proposition \ref{propdif} and $k= \frac{n-\sqrt{n}}{2}$, we get 
\begin{align*}
 2N_{(D,T)}^g & = (n-2 )-\frac{n-2\sqrt n}{4} - \frac{n+2\sqrt n}{4}\\
& = \frac{n}{2}-2.\end{align*}
Thus for all $g\in D$, $N_{(D,T)}^g = \frac{n}{4}-1$ and from Lemma \ref{difcard}, we have $N_{(D,T)}^g+1 = N_{(D,T)}^h$. Thus for all $h\in T$, we have $N_{(D,T)}^h = \frac{n}{4}$. By Theorem \ref{mainthm2}, $D$ forms a signature set for $(n,k)$-equiangular frame.\\
For part (b), since $D^c$ is a $(n,n-k,\bar\lambda)$ difference set with $0\notin D^c$, using the same argument as in part (a) for $D^c$, we have that $D^c$ forms a signature set for an $(n,n-k)$-equiangular frame if and only if $D^c$ is reversible and $n-k = \frac{n-\sqrt n}{2}$ that is $k= \frac{n+\sqrt n}{2}$. Using Proposition \ref{prop1}, $D\setminus\{0\}$ forms a signature set for $(n,k)$-equiangular frame if and only if $D$ is reversible and  $k= \frac{n+\sqrt n}{2}$.
\end{proof}
\begin{rmk}
Note that in the above result, part (a) we are getting $k=\frac{n-\sqrt n}{2}$. Using \eqref{relation}, the corresponding value of $\mu $ is $2$. From Proposition \ref{class}, we know that when $\mu =2$, then $n=4m^2$, $m$-positive integer. Thus we have $k= \frac{n-\sqrt n}{2} = 2m^2-m$. 
\end{rmk}
In \cite{comb}, a difference set $D$ with parameters $(4m^2; 2m^2 -m;m^2 -m)$ ($m$ a positive integer) is called a {\bf Hadamard difference set}. Thus we have the following corollary. 
\begin{cor}\label{haddif}
Let $G$ be a group of order $n$ and $D$ be a $(n,k,\lambda)$ difference set such that $0\notin D$. Then $D$ forms a signature set for an $(n,k)$-equiangular frame if and only if $D$ is a reversible Hadamard difference set.\end{cor}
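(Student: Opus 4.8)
The plan is to deduce Corollary \ref{haddif} directly from Theorem \ref{mainthmdif}(a) together with the remark immediately preceding it, so almost all of the substantive work will already be done. First I would recall that Theorem \ref{mainthmdif}(a) states, for a $(n,k,\lambda)$-difference set $D$ with $0\notin D$, that $D$ forms a signature set for an $(n,k)$-equiangular frame if and only if $D$ is reversible and $k=\frac{n-\sqrt n}{2}$. So the entire task reduces to showing that the conjunction ``$D$ reversible and $k=\frac{n-\sqrt n}{2}$'' is equivalent to the single statement ``$D$ is a reversible Hadamard difference set,'' i.e. a reversible difference set with parameters $(4m^2;\,2m^2-m;\,m^2-m)$ for some positive integer $m$.

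The forward direction is the content of the remark: if $k=\frac{n-\sqrt n}{2}$, then $\sqrt n$ must be an integer, and writing $n=4m^2$ forces $\sqrt n=2m$ and $k=\frac{4m^2-2m}{2}=2m^2-m$. The parameter $\lambda$ is then pinned down by Proposition \ref{propdif}(a), $\lambda=\frac{k(k-1)}{n-1}=\frac{(2m^2-m)(2m^2-m-1)}{4m^2-1}$, and a short factorization shows this equals $m^2-m$. Hence $D$ has exactly the Hadamard parameters $(4m^2;\,2m^2-m;\,m^2-m)$, and since $D$ is also reversible, it is a reversible Hadamard difference set. For the reverse direction I would simply observe that a reversible Hadamard difference set has by definition parameters $(4m^2;\,2m^2-m;\,m^2-m)$, so $n=4m^2$ gives $\sqrt n=2m$ and $k=2m^2-m=\frac{n-\sqrt n}{2}$, and $D$ is reversible; these are precisely the two hypotheses of Theorem \ref{mainthmdif}(a), which then yields that $D$ is a signature set for an $(n,k)$-equiangular frame.

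The only arithmetic step requiring care is verifying $\frac{(2m^2-m)(2m^2-m-1)}{4m^2-1}=m^2-m$, which I would carry out by factoring the numerator as $(2m^2-m)(2m^2-m-1)=m(2m-1)\cdot(2m+1)(m-1)=m(m-1)(2m-1)(2m+1)$ and the denominator as $4m^2-1=(2m-1)(2m+1)$, so the quotient is $m(m-1)=m^2-m$ as claimed. I do not expect any genuine obstacle here: the corollary is essentially a translation of Theorem \ref{mainthmdif}(a) into the established terminology for Hadamard difference sets, with the preceding remark supplying the identification $n=4m^2$, $k=2m^2-m$. The main thing to state clearly is that $k=\frac{n-\sqrt n}{2}$ already forces $n$ to be a perfect square of an even integer, which is what licenses writing $n=4m^2$ and thereby matching the Hadamard parameters exactly.
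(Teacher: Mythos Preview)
Your proposal is correct and follows precisely the approach the paper intends: the corollary is stated immediately after the remark with no separate proof, so the paper clearly means it as a direct restatement of Theorem~\ref{mainthmdif}(a) in the language of Hadamard difference sets, and that is exactly what you do. Your explicit verification that $\lambda=\frac{(2m^2-m)(2m^2-m-1)}{4m^2-1}=m^2-m$ via the factorizations $2m^2-m=m(2m-1)$, $2m^2-m-1=(2m+1)(m-1)$, and $4m^2-1=(2m-1)(2m+1)$ is a useful detail the paper omits.

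One small point worth tightening: you assert at the end that $k=\frac{n-\sqrt n}{2}$ ``already forces $n$ to be a perfect square of an even integer,'' but this does not follow from the equation alone (e.g.\ $n=9$, $k=3$ satisfies it). In the paper's remark the conclusion $n=4m^2$ is obtained by noting that $k=\frac{n-\sqrt n}{2}$ corresponds to $\mu=2$ via \eqref{relation} and then invoking Proposition~\ref{class}(b), which uses that $D$ is a signature set. Alternatively, one can argue directly from the difference-set side: writing $\sqrt n=s$ one gets $\lambda=\frac{s(s-2)}{4}$, and integrality of $\lambda$ forces $s$ even. Either route closes the gap; just make sure you invoke one of them rather than treating the evenness of $\sqrt n$ as automatic.
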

From Corollary \ref{haddif}, we infer that the problem of finding signature sets for $(n,\frac{n-\sqrt n}{2})$-equiangular frames is equivalent to finding reversible Hadamard difference sets. In \cite{Dillon}, an explicit construction of a reversible Hadamard difference set in $\bb Z_{2^{a+1}}^2$ for all $a \in \bb N$ is presented. Thus by Corollary \ref{haddif}, there exists $(n,\frac{n-\sqrt n}{2})$-equiangular frames for all $n$ of the type $2^{2a}$ where $a\in \bb N$. Following is an example of such a difference set in \cite{comb}.
\begin{eg} Let $G= \bb Z_8 \times Z_8$. Then the set $D= A\cup A^{-1}$ where \[ A = \{ab^4,ab^5,ab^6,ab^7,a^2b^2,a^2b^3,a^2b^6,a^2b^7,a^3b^2,a^3b^4,a^3b^5,a^3b^7,a^4b,a^4b^3\}\]is a $(64,28,12)$ reversible Hadamard difference set. Since $e\notin D$, from Corollary \ref{haddif}, $D$ forms a signature set for a $(64,28)$-equiangular frame.
 \end{eg}
\vspace{0.05in}
\begin{rmk}
The sets studied in Proposition \ref{propeg1} and Proposition \ref{propeg2} are reversible Hadamard difference sets not containing the identity. This is another way to see that these sets form signature sets for $(16,6)$ and $(36,15)$-equiangular frames respectively.
\end{rmk}

The following proposition gives us a relation between reversible Hadamard difference sets and Hadamard matrices. 
\begin{prop}Let $D$ be a $(n, k,\lambda)$ reversible Hadamard difference set with $0\notin D$ and let $Q = \sum_{g\in D}\lambda(g)-\sum_{g\in T}\lambda(g)$, where $T= D^c\setminus\{0\}$. Then the matrix $H =I-Q$ is a Hadamard matrix.
\end{prop}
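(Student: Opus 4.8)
The plan is to show directly that $H = I - Q$ meets the two defining requirements of a Hadamard matrix: every entry is $\pm 1$, and $H^{*}H = nI$. The essential structural input is that $Q$ is a bona fide signature matrix. Indeed, since $D$ is a reversible Hadamard difference set with $0 \notin D$, Corollary \ref{haddif} guarantees that $D$ forms a signature set for an $(n,k)$-equiangular frame, so $Q = \sum_{g\in D}\lambda(g)-\sum_{g\in T}\lambda(g)$ is the associated signature matrix. In particular $Q$ is real and self-adjoint with $Q_{ii}=0$ and $Q_{ij}=\pm 1$ for $i\neq j$.

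First I would check the entry condition. Because $Q_{ii}=0$, the diagonal entries of $H = I - Q$ are all equal to $1$, while each off-diagonal entry is $-Q_{ij}\in\{\pm 1\}$. Hence $h_{i,j}=\pm 1$ for all $i,j$.

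Next I would compute $H^{*}H$. Since $Q$ is real and self-adjoint, $H = I - Q$ is real symmetric, so $H^{*}=H$ and $H^{*}H = H^{2}$. Expanding gives $H^{2} = (I-Q)^{2} = I - 2Q + Q^{2}$. By Theorem \ref{sigm} we have $Q^{2} = (n-1)I + \mu Q$, and the decisive point is that for a reversible Hadamard difference set with $0\notin D$ the relevant parameter is exactly $\mu = 2$: this is the value recorded in the remark following Theorem \ref{mainthmdif}, obtained from $k = \tfrac{n-\sqrt n}{2}$ together with the relations in \eqref{relation}. Substituting, the two linear terms cancel, so $H^{2} = I - 2Q + (n-1)I + 2Q = nI$, and therefore $H^{*}H = nI$.

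There is no genuine obstacle here beyond assembling the earlier results; the only point worth flagging is that the defining feature of a Hadamard difference set—namely $\mu = 2$—is precisely what forces the cross terms $-2Q$ and $+\mu Q$ to annihilate one another and leave $nI$. Combining the two verifications, $H = I - Q$ has all entries equal to $\pm 1$ and satisfies $H^{*}H = nI$, so $H$ is a Hadamard matrix.
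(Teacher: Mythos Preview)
Your proof is correct and follows essentially the same approach as the paper: both invoke Corollary \ref{haddif} to identify $Q$ as a signature matrix, verify the $\pm 1$ entry condition directly, and then expand $(I-Q)^2$ using Theorem \ref{sigm} with $\mu = 2$ so that the linear terms cancel to give $nI$. The only cosmetic difference is that you compute $H^*H$ while the paper computes $HH^*$, which is immaterial since $H$ is symmetric.
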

\begin{proof}Since $D$ is a $(n,k,\lambda)$ reversible Hadamard difference set with $0\notin D$, by Corollary \ref{haddif}, $D$ forms a signature set for an $(n,k)$-equiangular frame. Thus $Q =\sum_{g\in D}\lambda(g)-\sum_{g\in T}\lambda(g) $ is a signature matrix for the $(n,k)$-equiangular frame. Let $H=I-Q$. Then for all $i,j$, $h_{i,j}=\pm 1$. Consider \begin{align*}H^2& = (I-Q)^2\\ & = I-2Q+Q^2\\ & = I -2Q+ (n-1)I + 2Q && \text{by Theorem \ref{sigm}}\\ &= nI.\end{align*} Also $Q= Q^{*}$ implies $H=H^{*}$. Thus we have $HH^{*}= nI$ and hence $H=I-Q$ is a Hadamard matrix.
\end{proof}
\section{Quasi-Signature Sets and Real Equiangular Frames}\label{sec4}
If $Q$ is a Seidel matrix, we say that $Q$ is in a standard form if its first row and
column contains only $1$'s except on the diagonal, as shown below: 
\[Q=\left[\begin{array}{ccccc}
0&1&\hdots&\hdots& 1\\
1&0&*&\hdots&*\\
\vdots&*&\ddots&\ddots&\vdots\\
\vdots&\vdots&\ddots&\ddots&\vdots\\
1& * &\hdots &\hdots &0 \end{array}\right]\]
 We say that it is trivial if it has a standard form which has all of its off-diagonal entries
equal to $1$ and nontrivial if at least one off-diagonal entry is not equal to $1$. 
One can verify by conjugation with an appropriate diagonal unitary that the
equivalence class of any Seidel matrix contains a matrix of standard form. % so we
%only need to examine when matrices of this form satisfies either of the conditions \eqref{condsigm2} or \eqref{condsigm3}  of Theorem \ref{sigm}.
In the real case, the off-diagonal entries of $Q$ are in the set $\{-1, 1\}$ and in the complex case the off-diagonal entries of $Q$ are roots of unity as shown in \cite{cuberoots} where off-diagonal entries are cube roots of unity.\\
\\Let $G$ be a group of order $m$. Let $\lambda: G \longrightarrow GL(\bb F(G))$ be the left regular representation such that 
$\lambda(g)e_h=e_{gh}$. Then we know that $\sum_{g\in G}\lambda(g)= J$ where $J$ is the $m\times m$ matrix of all $1's$. As in Section \ref{sec2}, we will form signature matrices using the left regular representation of a group, but in the standard form. \\
Lets observe the following definition in analogy with the Definition \ref{defsigset} for constructing signature matrices in the standard form.
\vspace{0.022in}
\begin{defn}\label{defqsigset}
Let $S \subset G\setminus\{e\}$ and $T = S^c\setminus \{e\}$ such that $G\setminus \{e\} = S\cup T$. As before let $ Q = \sum_{g\in S}\lambda(g)- \sum_{h\in T}\lambda(h)$. Form \[\tilde Q = \left[\begin{array}{c|c}
0&C^t\\
\hline
C&Q\\
\end{array}\right]\] where \[C = \left(\begin{array}{c}1\\\vdots \\1\end{array}\right)\in \bb C^m\] Then $\tilde Q$ is an $(m+1)\times (m+1)$ matrix with $\tilde Q_{ii} = 0$ and $|\tilde Q_{ij} | = 1$ for all $i \neq j$. Let $n=m+1$, then we call $S$ a quasi-signature set for an $(n,k)$ equiangular frame if $\tilde Q$ forms a signature matrix for an $(n,k)$ equiangular frame. 
\end{defn}
\vspace{0.03in}
Following is a necessary and sufficient condition for a set in a group $G$ to be a quasi-signature set for an $(n,k)$-equiangular frame.
\vspace{0.02in}
\begin{thm}\label{mainthmqs1}Let $G$ be a group of order $m$ and $S\subset G\setminus\{e\}$, $T = S^c\setminus\{e\}$ such that $G\setminus\{e\}= S\cup T$. Then there exists a $k$ such that $S$ forms a quasi-signature set for an $(n,k)$-equiangular frame, where $n=m+1$, if and only if the following hold: 
\begin{enumerate}
\item[(a)]$g\in S$ implies $g^{-1}\in S$ and $h\in T$ implies $h^{-1}\in T$;
\item[(b)]\label{condq12} for all $g \in S$, $N_{(S,S)}^g - 2N_{(S,T)}^g +N_{(T,T)}^g  = \mu -1$ and for all $h\in T$, $N_{(S,S)}^h - 2N_{(S,T)}^h  +N_{(T,T)}^h  = -\mu -1$ where $\mu=|S|-|T|$.\end{enumerate}
$k$ is related to $\mu$ by the equations given in \eqref{relation}.
\end{thm}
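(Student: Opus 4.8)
The plan is to reduce this statement to Theorem \ref{sigm} applied to the bordered matrix $\tilde Q$, exactly as Theorem \ref{mainthm1} reduced the unbordered case. By Definition \ref{defqsigset}, $S$ is a quasi-signature set for an $(n,k)$-equiangular frame (with $n=m+1$) if and only if $\tilde Q$ is a signature matrix for such a frame, and by Theorem \ref{sigm} this happens if and only if $\tilde Q=\tilde Q^{*}$ and $\tilde Q^{2}=(n-1)I_{n}+\mu\,\tilde Q$ for some real $\mu$. So the whole proof amounts to translating these two matrix conditions on $\tilde Q$ into the group-theoretic statements (a) and (b). The self-adjointness condition is immediate: since $\tilde Q$ is built from the real vector $C$ of all $1$'s and the block $Q=\sum_{g\in S}\lambda(g)-\sum_{h\in T}\lambda(h)$, and the border is automatically symmetric, $\tilde Q=\tilde Q^{*}$ reduces to $Q=Q^{*}$, which by the computation in the proof of Theorem \ref{mainthm1} is equivalent to condition (a), namely $S=S^{-1}$ and $T=T^{-1}$.

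The main work is the quadratic condition. First I would compute $\tilde Q^{2}$ blockwise using the partition
\[
\tilde Q=\left[\begin{array}{c|c} 0 & C^{t}\\ \hline C & Q\end{array}\right],
\]
which gives
\[
\tilde Q^{2}=\left[\begin{array}{c|c} C^{t}C & C^{t}Q\\ \hline QC & CC^{t}+Q^{2}\end{array}\right].
\]
Here $C^{t}C=m$, while $C^{t}Q$ and $QC$ are the row- and column-sums of $Q$; since $Q$ has one $0$ on each diagonal entry and $|S|$ entries equal to $+1$ and $|T|$ entries equal to $-1$ in each row, each of these sums equals $|S|-|T|=\mu$, so $C^{t}Q=\mu\,C^{t}$ and $QC=\mu\,C$. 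Finally $CC^{t}=J_{m}$, the $m\times m$ all-ones matrix. On the other side, the target $(n-1)I_{n}+\mu\tilde Q$ has blocks $\bigl[\begin{smallmatrix} m & \mu C^{t}\\ \mu C & mI_{m}+\mu Q\end{smallmatrix}\bigr]$, using $n-1=m$. Comparing the four blocks, the top-left and the two off-diagonal blocks match automatically, so the entire content of the quadratic condition collapses to the single bottom-right block equation
\[
J_{m}+Q^{2}=m\,I_{m}+\mu\,Q,
\]
i.e. $Q^{2}=(m-1)I_{m}+\mu Q+(I_{m}-J_{m})$.

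The last step is to expand this bottom-right equation in the group algebra and read off the coefficients of each $\lambda(g)$, exactly as in Theorem \ref{mainthm1}. Writing $J_{m}=\sum_{g\in G}\lambda(g)=I_{m}+\sum_{g\in S}\lambda(g)+\sum_{h\in T}\lambda(h)$ and using the counting identity for $Q^{2}$ already established there (whose coefficient of $\lambda(g)$ is $N_{(S,S)}^{g}-N_{(S,T)}^{g}-N_{(T,S)}^{g}+N_{(T,T)}^{g}$, simplified by Lemma \ref{lem1} to $N_{(S,S)}^{g}-2N_{(S,T)}^{g}+N_{(T,T)}^{g}$), I would match coefficients. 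The identity element contributes $m-1$ to both sides and carries no information beyond confirming $|S|+|T|=m-1$. For $g\in S$ the extra $+1$ coming from $J_{m}$ on the left shifts the right-hand coefficient from $\mu$ down to $\mu-1$, yielding $N_{(S,S)}^{g}-2N_{(S,T)}^{g}+N_{(T,T)}^{g}=\mu-1$; for $h\in T$ it shifts $-\mu$ down to $-\mu-1$, yielding $N_{(S,S)}^{h}-2N_{(S,T)}^{h}+N_{(T,T)}^{h}=-\mu-1$. These are precisely the two equations in (b). I would also note that the value of $\mu$ is forced: since row-sums of $Q$ equal $\mu$, we must have $\mu=|S|-|T|$, which is exactly the value asserted in (b). The relation between $k$ and $\mu$ then follows from \eqref{relation} as in Theorem \ref{sigm}.

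The only subtle point, and the place I would be most careful, is the bookkeeping of the $J_{m}=CC^{t}$ term: it is this all-ones block, absent in the unbordered setting of Theorem \ref{mainthm1}, that produces the shift of $\mu$ to $\mu\mp1$ and thereby distinguishes the quasi-signature conditions from the ordinary signature conditions. Everything else is a direct transcription of the earlier argument, so I expect no genuine obstacle beyond verifying that $C^{t}Q=\mu C^{t}$ and $QC=\mu C$ hold for the specific value $\mu=|S|-|T|$, which is what makes the two off-diagonal blocks match and hence what makes $\mu$ in the statement coincide with $|S|-|T|$.
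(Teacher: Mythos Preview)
Your proposal is correct and follows essentially the same approach as the paper's own proof: both reduce to Theorem~\ref{sigm}, compute $\tilde Q^{2}$ blockwise to obtain the forced value $\mu=|S|-|T|$ from the off-diagonal blocks and the equation $J+Q^{2}=(n-1)I+\mu Q$ from the bottom-right block, then expand $J=\sum_{g\in G}\lambda(g)$ in the group algebra and match coefficients (invoking Lemma~\ref{lem1}) to obtain the $\mu\mp 1$ conditions. Your write-up is if anything slightly more explicit about why the top-left and off-diagonal blocks match automatically, but the argument is the same.
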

\begin{proof} Form $Q= \sum_{g\in S}\lambda(g)- \sum_{h\in T}\lambda(h)$ and \[\tilde Q = \left[\begin{array}{c c}
0&C^t\\
C&Q\\
\end{array}\right]\] where \[C = \left(\begin{array}{c}1\\\vdots \\1\end{array}\right)\in \bb C^m\]  Then by Definition \ref{defqsigset}, $S$ will form a quasi-signature set for an $(n,k)$-equiangular frame if and only if $\tilde Q$ forms a signature matrix for an $(n,k)$-equiangular frame. From Theorem \ref{sigm}, $\tilde Q$ will form a signature matrix for an $(n,k)$ equiangular frame if and only if it satisfies the following two conditions:
\begin{enumerate}
\item[(a)] $\tilde Q$ is self adjoint that is $\tilde Q = \tilde Q^*$; and 
\item[(b)] $\tilde Q^2 = (n-1)I + \mu \tilde Q$ for some real number $\mu$.
\end{enumerate}
The condition $\tilde Q = \tilde Q^*$ is equivalent $Q=Q^*$ which is equivalent to \begin{align*}\sum_{g\in S}\lambda(g)- \sum_{h\in T}\lambda(h)& = (\sum_{g\in S}\lambda(g)- \sum_{h\in T}\lambda(h))^*\\
& = \sum_{g\in S}\lambda(g^{-1})- \sum_{h\in T}\lambda(h^{-1}).\end{align*} 
Thus $g\in S$ implies $g^{-1}\in S$ and $h\in T$ implies $h^{-1} \in T$.\\
For the second condition we need $\tilde Q^2=(n-1)I +\mu \tilde Q $. We have  
 \[\tilde Q^2 = \left[\begin{array}{cc}
n-1& \tilde C^t  \\
\tilde C & J +  Q^2  
\end{array}\right]\] where $\tilde C=(|S|-|T|)C$ and $J$ is an $m\times m$ matrix of all $1$'s.
Thus $\tilde Q^2= (n-1)I +\mu \tilde Q$ if and only if 
\begin{enumerate}
\item[(a)]$|S|-|T|=\mu$; and 
\item[(b)]$J +  Q^2 = (n-1)I + \mu Q $ that is $Q^2 = (n-1)I + \mu Q - J$. \\Since $J = \sum_{g\in G}\lambda (g)$, we have \begin{align*}  Q^2 & = (n-1)I + \mu Q -J\\
& = (n-2)I + \mu(\sum_{g\in S}\lambda(g)- \sum_{h\in T}\lambda(h)) - \sum_{g\in G\setminus\{e\}}\lambda (g) \\
& = (n-2)I + (\mu-1)\sum_{g\in S}\lambda(g)- (\mu+1)\sum_{h\in T}\lambda(h).\end{align*}
\end{enumerate}
By same counting arguments as used before in Theorem \ref{mainthm1}, we get $Q^2=(n-2)I + (\mu-1)\sum_{g\in S}\lambda(g)- (\mu+1)\sum_{h\in T}\lambda(h)$ if and only if for all $g\in S$, $N_{(S,S)}^g - 2N_{(S,T)}^g  +N_{(T,T)}^g  = \mu -1$ and 
for all $h\in T$, $N_{(S,S)}^h - 2N_{(S,T)}^h  +N_{(T,T)}^h  = -\mu -1$ where $\mu=|S|-|T|$.
\end{proof}
\vspace{0.02in}
\begin{rmk}
 From Theorem \ref{mainthmqs1}, note that if $S\subset G$ is a quasi-signature set for an $(n,k(\mu))$-equiangular frame then $|S|-|T|=\mu$.
\end{rmk}
\vspace{0.02in}
\begin{prop}\label{card}Let $G$ be a group of order $m$ and $S\subset G\setminus\{e\}$, $T=S^c\setminus \{e\}$. Then the condition $|S|-|T|=\mu$ for some integer $\mu$ is equivalent to $|S| = \frac{n-2+\mu}{2}$ and $|T|= \frac{n-2-\mu}{2}$ where $n=m+1$.
\end{prop}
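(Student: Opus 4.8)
The plan is to reduce both directions of the equivalence to a single elementary cardinality identity coming from the fact that $S$ and $T$ partition $G\setminus\{e\}$. The key preliminary observation I would record first is that, since $S$ and $T$ are disjoint with $S\cup T = G\setminus\{e\}$ and $|G|=m=n-1$, one has
\[
|S|+|T| = m-1 = n-2.
\]
This identity holds unconditionally and involves no reference to $\mu$; it is the only structural input the proof really needs.

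For the forward implication I would assume $|S|-|T|=\mu$ and pair this with the identity $|S|+|T|=n-2$ to obtain a $2\times 2$ linear system in the unknowns $|S|$ and $|T|$. Adding the two equations gives $2|S| = n-2+\mu$, and subtracting them gives $2|T| = n-2-\mu$, which yields the two claimed formulas $|S| = \frac{n-2+\mu}{2}$ and $|T| = \frac{n-2-\mu}{2}$. For the converse, assuming the two formulas, I would simply subtract the expression for $|T|$ from that for $|S|$ to recover $|S|-|T|=\mu$ directly.

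The hard part will be essentially nonexistent, as this is a routine solving of a linear system; the one place where a slip could occur, and which I would flag explicitly, is the correct bookkeeping of the removed identity element, so that $|S|+|T|=n-2$ and not $n-1$ or $m$. Getting that offset right is what makes the shift from $m$ to $n=m+1$ propagate correctly into the final formulas.
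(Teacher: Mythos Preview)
Your proof is correct and follows essentially the same approach as the paper: both use the partition identity $|S|+|T|=m-1=n-2$ together with $|S|-|T|=\mu$ as a $2\times 2$ linear system, and handle the converse by direct subtraction.
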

\begin{proof}Let $\mu \in \bb Z$. If $|S| = \frac{n-2+\mu}{2}$ and $|T|= \frac{n-2-\mu}{2}$, then $|S|-|T|=\mu$. \\
If $|S|-|T|=\mu$, then since $G\setminus \{e\}= S\cup T$, we have $|S|+|T| = m-1=n-2$. Thus solving for $|S|$ and $|T|$, we get that $|S| = \frac{n-2+\mu}{2}$ and $|T|= \frac{n-2-\mu}{2}$.
\end{proof}
\vspace{0.02in}
\begin{eg}We know that there exists a $(6,3)$-equiangular frame. In this example we will show that this frame comes from a quasi-signature set $S$ in $\bb Z_5$. Let $G = (\bb Z_5,+)$. Since $\mu = 0$, we must have $|S| = |T| = 2$. If we take $S = \{1,4\}$ and $T= \{2,3\}$, then we form $Q = \sum_{g\in S}\lambda(g)-\sum_{h\in T}\lambda(h)$, then $Q$ is given from the following multiplication table  
\begin{center}
\begin{tabular}{l|lllll}
$+$ & 0 & 1 & 2 & 3 & 4 \\ 
\hline
0 & 0 & 1 & -1 & -1 & 1 \\ 
4 & 1 & 0 & 1 & -1 & -1 \\ 
3 & -1 & 1 & 0 & 1 & -1 \\ 
2 & -1 & -1 & 1 & 0 & 1 \\ 
1 & 1 & -1 & -1 & 1 & 0
\end{tabular}
\end{center}
that is \[Q = \left[\begin{array}{ccccc}
 0 & 1 & -1 & -1 & 1 \\ 
 1 & 0 & 1 & -1 & -1 \\ 
 -1 & 1 & 0 & 1 & -1 \\ 
 -1 & -1 & 1 & 0 & 1 \\ 
 1 & -1 & -1 & 1 & 0
\end{array}\right]\] 
Thus we can form $\tilde Q$ as follows 
 \[\tilde Q = \left[\begin{array}{ccc}
0&C^t\\
C&Q \end{array}\right]= \left[\begin{array}{cccccc}
0&1&1&1&1&1\\
1& 0 & 1 & -1 & -1 & 1 \\ 
 1&1 & 0 & 1 & -1 & -1 \\ 
 1&-1 & 1 & 0 & 1 & -1 \\ 
 1&-1 & -1 & 1 & 0 & 1 \\ 
 1& 1 & -1 & -1 & 1 & 0
\end{array}\right]\] 
It can be easily verified that $\tilde Q = \tilde Q^*$ and $\tilde Q^2 = 5I$. Thus $S$ forms a quasi-signature set for $(6,3)$-equiangular frame.
\end{eg} 
Again as before in Section \ref{sec2}, we can simplify the conditions given in Theorem \ref{mainthmqs1} as follows:
\begin{thm}\label{mainthmqs2}Let $G$ be a group with $|G|= m$ and $S,T\subset G\setminus\{e\}$ be disjoint such that $G\setminus \{e\}= S \cup T$. Also let $S = S^{-1}$ and $T = T^{-1}$. Then there exists a $k$ such that $S$ forms a quasi-signature set for an $(n,k)$-equiangular frame where $n=m+1$ if and only if for all $g\in S$, \begin{equation}\label{condqs3}N_{(S,S)}^g = \frac {n+3\mu-6}{4}\end{equation} and for all $h\in T$, \begin{equation}\label{condqs4} N_{(T,T)}^h = \frac {n-3\mu-6}{4},\end{equation} where $\mu = |S|-|T|$. Here $k$ and $\mu$ are related by Equation \eqref{relation}.
 \end{thm}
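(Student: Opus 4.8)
The plan is to reduce everything to the characterization already obtained in Theorem \ref{mainthmqs1} and then eliminate the ``cross'' count $N_{(S,T)}^g$ (together with one of the two diagonal counts) by the same elementary counting identities used in the proof of Theorem \ref{mainthm2}. Theorem \ref{mainthmqs1} tells us that $S$ is a quasi-signature set precisely when $S=S^{-1}$, $T=T^{-1}$, and
\[
N_{(S,S)}^g-2N_{(S,T)}^g+N_{(T,T)}^g=\mu-1 \ (g\in S), \qquad N_{(S,S)}^h-2N_{(S,T)}^h+N_{(T,T)}^h=-\mu-1 \ (h\in T),
\]
with $\mu=|S|-|T|$. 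Since the involution hypotheses $S=S^{-1}$ and $T=T^{-1}$ are assumed outright in the present statement, the whole content is to show that, for such $S$ and $T$, these two displayed equations are equivalent to \eqref{condqs3} and \eqref{condqs4}.

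First I would record the cardinalities: by Proposition \ref{card}, $|S|=\frac{n-2+\mu}{2}$ and $|T|=\frac{n-2-\mu}{2}$, and I will abbreviate $l=|S|$. Then, fixing $g\in S$, I partition $S$ according to where $a^{-1}g$ lands: each $a\in S$ contributes to $N_{(S,S)}^g$ or to $N_{(S,T)}^g$ unless $a^{-1}g=e$, i.e. $a=g$, which occurs exactly once because $g\in S$. This gives $|S|=N_{(S,S)}^g+N_{(S,T)}^g+1$. Running the same argument over $a\in T$ (where $a=g$ never occurs, as $g\notin T$) and invoking Lemma \ref{lem1} to replace $N_{(T,S)}^g$ by $N_{(S,T)}^g$ gives $|T|=N_{(S,T)}^g+N_{(T,T)}^g$. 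These two identities let me solve for $N_{(S,T)}^g$ and $N_{(T,T)}^g$ linearly in terms of $N_{(S,S)}^g$, $l$, and $n$. Substituting into the first equation above and clearing $l$ via Proposition \ref{card} collapses everything to the single relation $N_{(S,S)}^g=\frac{n+3\mu-6}{4}$, which is \eqref{condqs3}.

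For $h\in T$ the argument is identical except that the ``$+1$'' now attaches to $T$ rather than $S$: here $|S|=N_{(S,S)}^h+N_{(S,T)}^h$ while $|T|=N_{(S,T)}^h+N_{(T,T)}^h+1$, since $a=h$ occurs once among $a\in T$ and never among $a\in S$. Solving for $N_{(S,S)}^h$ and $N_{(S,T)}^h$ in terms of $N_{(T,T)}^h$ and substituting into the second equation above yields \eqref{condqs4}. The equivalence runs in both directions because every manipulation is a reversible linear substitution.

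The only place to be careful---and the main (if modest) obstacle---is the bookkeeping of these ``$+1$'' offsets: whether the degenerate pair with $a=g$ (resp. $a=h$) falls into the $S$-sum or the $T$-sum depends on which of $S$ or $T$ contains the fixed element, and mislabeling it shifts the final constant. Keeping the two cases $g\in S$ and $h\in T$ rigorously separate, and applying Lemma \ref{lem1} only to swap $N_{(T,S)}$ with $N_{(S,T)}$ (never $N_{(S,S)}$ or $N_{(T,T)}$), is what makes the constants $\frac{n+3\mu-6}{4}$ and $\frac{n-3\mu-6}{4}$ come out correctly.
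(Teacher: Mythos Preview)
Your proposal is correct and follows essentially the same route as the paper's own proof: both reduce to Theorem \ref{mainthmqs1}, use Proposition \ref{card} for $|S|$ and $|T|$, and eliminate $N_{(S,T)}^g$ and the other diagonal count via the identities $|S|-1=N_{(S,S)}^g+N_{(S,T)}^g$, $|T|=N_{(S,T)}^g+N_{(T,T)}^g$ for $g\in S$ (and the symmetric pair for $h\in T$). Your explicit mention of Lemma \ref{lem1} to justify replacing $N_{(T,S)}$ by $N_{(S,T)}$ is a small improvement in rigor over the paper, which uses the resulting identity without comment.
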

\begin{proof} Let us assume that $|S|-|T|=\mu$, for some $\mu \in \bb Z$. Using Proposition \ref{card}, we have $|S| = \frac{n-2+\mu}{2}$. 
For $g \in S$, let $N_{(S,S)}^g =m_1$. Since $|S|-1= N_{(S,S)}^g+ N_{(S,T)}^g$, we have $N_{(S,T)}^g = |S|-1-m_1$. Also, we have $N_{(S,T)}^g+N_{(T,T)}^g = |T| = |G|-1-|S|$. Thus \begin{align*}N_{(T,T)}^g& =|G|-1-|S|-N_{(S,T)}^g \\& = n-2-|S|-(|S|-1-m_1)\\ & = n-2|S|+m_1-1.\end{align*} 
From Theorem \ref{mainthmqs1}, $S$ will form a quasi-signature set for an $(n,k)$-equiangular frame if and only if condition \eqref{condq12} holds. \\Thus $N_{(S,S)}^g - 2N_{(S,T)}^g  +N_{(T,T)}^g  = \mu-1$ is equivalent to \begin{align*}\mu -1 & = m_1 -2(|S|-1-m_1)+n-2|S|+m_1-1\\&= 4m_1 - 4|S|+1+n\end{align*} or equivalently, we have $n+2-\mu = 4(|S|-m_1)$.\\ 
Thus \begin{align*}m_1&= |S|-\frac{n+2-\mu}{4}\\
      & = \frac{n+3\mu-6}{4}
     .\end{align*}

Now if we let $h\in T$ and $N_{(T,T)}^h= m_2 $, then as above $N_{(S,T)}^h = |T|-1-m_2$. Also, $N_{(S,T)}^h+N_{(S,S)}^h = |S|= |G|-1-|T|$. 
Thus \begin{align*}N_{(S, S)}^h & = n-2-|T|-N_{(S,T)}^h\\
& = n-2-|T|-(|T|-1-m_2) \\
& = n-2|T|+m_2-1.\end{align*}
Again using Theorem \ref{mainthmqs1}, condition \eqref{condq12}, we have $ -N_{(S,S)}^h + 2N_{(S,T)}^h  -N_{(T,T)}^h = \mu +1$ is equivalent to \begin{align*}\mu +1 & = -(n-2|T|+m_2-1) + 2(|T|-1-m_2)-m_2\\ & = -4m_2 +4|T| -n -1 .\end{align*} or equivalently, we have \begin{align*} m_2&= |T|-\frac{\mu+n+2}{4}\\
&= \frac{n-3\mu-6}{4}.\end{align*}
Thus $S$ forms a quasi-signature set for an $(n,k)$-equiangular frame if and only if for all $g\in S$, we have \[N_{(S,S)}^g=\frac{n+3\mu-6}{4}\] and for all $h\in T$, we have \[N_{(T,T)}^h=\frac{n-3\mu-6}{4}.\]
\end{proof}
\begin{rmk}Since $|S|-1= N_{(S,S)}^g+ N_{(S,T)}^g$, we can find $N_{(S,T)}^g$ in terms of $n$ and $\mu$. Thus, \begin{align*}N_{(S,T)}^g&= |S|-1-N_{(S,S)}^g\\
& = \frac{n-2-\mu}{4}
.\end{align*}
 Similarly for all $h \in T$, we have
 \begin{align*}N_{(S,T)}^h&= |T|-1-N_{(T,T)}^h\\
& = \frac{n-2+\mu}{4}.\end{align*}
\end{rmk}
\vspace{0.09in}
\begin{cor}\label{corqs2}
 Let $G$ be a group of order $m$. If there exists a quasi-signature set $S$ in $G$ for an $(n,k(\mu))$-equiangular frame where $n=m+1$, then 
\begin{enumerate}
\item[(a)] $\mu \equiv 0\pmod2$;
\item[(b)] $n\equiv 0\pmod2$;
\item[(c)] \label{rel} $n$, $\mu$ satisfies $2-\frac{n}{3}\leq \mu \leq \frac{n}{3}-2$.\end{enumerate}
\end{cor}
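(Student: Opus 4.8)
The plan is to derive each of the three divisibility and inequality constraints directly from the characterization of quasi-signature sets established in Theorem \ref{mainthmqs2}. That theorem tells us that, whenever $S$ is a quasi-signature set for an $(n,k(\mu))$-equiangular frame, the quantities $N_{(S,S)}^g$ for $g\in S$ and $N_{(T,T)}^h$ for $h\in T$ are given by the explicit closed forms $\frac{n+3\mu-6}{4}$ and $\frac{n-3\mu-6}{4}$ respectively, where $\mu=|S|-|T|$. The whole corollary should follow by exploiting the simple observation that counting functions $N_{(S,S)}^g$ and $N_{(T,T)}^h$ are \emph{nonnegative integers}, together with elementary arithmetic on these two expressions.

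First I would establish parts (a) and (b) simultaneously by adding and subtracting the two displayed equations \eqref{condqs3} and \eqref{condqs4}. Adding them gives $N_{(S,S)}^g+N_{(T,T)}^h=\frac{2n-12}{4}=\frac{n-6}{2}$, and since the left-hand side is an integer, $n-6$ must be even, forcing $n\equiv 0\pmod 2$; this yields part (b). Subtracting \eqref{condqs4} from \eqref{condqs3} gives $N_{(S,S)}^g-N_{(T,T)}^h=\frac{6\mu}{4}=\frac{3\mu}{2}$, and again since the left side is an integer, $3\mu$ must be even, hence $\mu\equiv 0\pmod 2$, which is part (a). (One may double-check consistency with the requirement that $\frac{n+3\mu-6}{4}$ itself be an integer: with $n$ and $\mu$ both even, $n+3\mu-6$ is even, and a short parity check confirms divisibility by $4$ is compatible, though the two summed/subtracted relations already suffice.)

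For part (c), I would use nonnegativity of the two counting functions. Since $N_{(S,S)}^g=\frac{n+3\mu-6}{4}\ge 0$, we get $3\mu\ge 6-n$, i.e. $\mu\ge 2-\frac{n}{3}$. Similarly, since $N_{(T,T)}^h=\frac{n-3\mu-6}{4}\ge 0$, we get $3\mu\le n-6$, i.e. $\mu\le \frac{n}{3}-2$. Combining the two inequalities yields exactly $2-\frac{n}{3}\le \mu\le \frac{n}{3}-2$, establishing part (c). This mirrors the proof of Corollary \ref{even} in the signature-set setting, where nonnegativity of $N_{(S,T)}^g$ and $N_{(S,T)}^h$ produced the bound $-(n-2)\le \mu\le n-2$.

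I do not expect any genuine obstacle here; the result is a direct corollary in the same spirit as Corollary \ref{even}, and every step is routine arithmetic once Theorem \ref{mainthmqs2} is invoked. The only point requiring a little care is making sure the quantities whose integrality or nonnegativity I invoke are genuinely the counting functions $N_{(S,S)}^g$ and $N_{(T,T)}^h$ (which are manifestly nonnegative integers by definition) rather than the raw fractions, so that the parity and bound conclusions are legitimate. A minor subtlety worth noting is that these equations must hold simultaneously for the claimed common value across all $g\in S$ and all $h\in T$, but this is exactly the content of Theorem \ref{mainthmqs2}, so no extra work is needed.
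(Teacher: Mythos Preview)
Your proposal is correct and follows essentially the same approach as the paper: invoke Theorem~\ref{mainthmqs2}, add and subtract the two closed-form expressions \eqref{condqs3} and \eqref{condqs4} to obtain the parity conditions on $n$ and $\mu$, and then use nonnegativity of $N_{(S,S)}^g$ and $N_{(T,T)}^h$ to derive the two-sided bound on $\mu$. The only cosmetic difference is that the paper presents parts (a) and (b) in the opposite order, but the computations are identical.
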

\begin{proof}If there exists a quasi-signature set $S$ in $G$ for an $(n,k(\mu))$-equiangular frame, then we know from Theorem \ref{mainthmqs2}, conditions \ref{condqs3} and \ref{condqs4} hold. Adding \ref{condqs3} and \ref{condqs4}, we get $n\equiv 0\pmod 2$. Subtracting \ref{condqs3} from \ref{condqs4}, we get $\mu \equiv 0\pmod2$.\\ Also $0\leq N_{(S,S)}^g = \frac{3\mu +n-6}{4}$ implies that $2-\frac{n}{3}\leq \mu$ and $0\leq N_{(T,T)}^h =  \frac{-3\mu +n-6}{4}$ implies that $\mu \leq \frac{n}{3}-2$. Thus we have, \begin{equation*} 2-\frac{n}{3}\leq \mu \leq \frac{n}{3}-2.\end{equation*}\end{proof}
Note that in the case of quasi-signature sets, we have a better bound on the value of $\mu$ as compared to the case of signature sets. Thus we have the following proposition that eliminates some of the cases in which $S \subset G$ can be a quasi-signature set for an $(n,k(\mu))$-equiangular frame.  
 
\begin{prop}
 Let $G$ be a group of order $m$ and $n=m+1$, then the following hold.
\begin{enumerate}
\item[(a)] If there exists a quasi-signature set for an $(n,\frac{n}{2})$-equiangular frame, then $n=2a$ where $a\in \bb N$ is an odd number. 
\item[(b)] For an odd prime $p$, if there exists a quasi-signature set for a $(2p,k)$-equiangular frame, then $k=p$.
\item[(c)] For an odd prime $p$, there does not exist a quasi-signature set for a $(4p,k)$-equiangular frame for any value of $k$.\end{enumerate}
\end{prop}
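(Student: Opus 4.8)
The plan is to convert each assertion about the pair $(n,k)$ into an arithmetic condition on the single parameter $\mu$, using the $k$--$\mu$ relation in \eqref{relation}, the parity and range restrictions of Corollary \ref{corqs2}, and the counting identities \eqref{condqs3}--\eqref{condqs4} of Theorem \ref{mainthmqs2}. Since Corollary \ref{corqs2} guarantees that both $n$ and $\mu$ are even, I would write $n=2n_1$ and $\mu=2\mu_1$; substituting into \eqref{relation} gives $k=n_1-\frac{n_1\mu_1}{\sqrt{2n_1-1+\mu_1^2}}$, which is exactly the form used in the signature-set analysis of Proposition \ref{class}, so parts (b) and (c) should run parallel to Proposition \ref{class}(d),(e) but with the sharper bound from Corollary \ref{corqs2}(c).

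For part (a), a $(n,\tfrac n2)$-frame means $k=\tfrac n2$, and the second formula in \eqref{relation} shows this holds if and only if $\mu=0$. Feeding $\mu=0$ into \eqref{condqs3} gives $N_{(S,S)}^g=\frac{n-6}{4}$ for every $g\in S$; as this count is a nonnegative integer we need $n\equiv 6\equiv 2\pmod 4$, i.e. $n=2a$ with $a$ odd. This is immediate once the reduction to $\mu=0$ is noted.

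For part (b) I would set $n_1=p$. Integrality of $k=p-\frac{p\mu_1}{\sqrt{2p-1+\mu_1^2}}$ forces $2p-1+\mu_1^2$ to be a perfect square dividing $p^2\mu_1^2$. If $\mu_1\neq 0$, then as in Proposition \ref{class}(d) the case $p\nmid(2p-1+\mu_1^2)$ would give $(2p-1+\mu_1^2)\mid\mu_1^2$, impossible since $2p-1+\mu_1^2>\mu_1^2$; hence $p\mid\mu_1^2-1$, so $\mu_1\equiv\pm1\pmod p$. The tight bound $|\mu_1|\le\frac p3-1$ then eliminates $\pm(p-1)$ and leaves only $\mu_1=\pm1$ (and only for $p\ge 7$; for $p\le 5$ the bound already forces $\mu_1=0$). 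But $\mu_1=\pm1$ makes $2p-1+\mu_1^2=2p$, which is not a perfect square for an odd prime $p$, so $k$ would be irrational. Thus $\mu_1=0$, $\mu=0$, and $k=p$. The point I expect to matter here is that one must invoke perfect-squareness, not just divisibility, to kill $\mu_1=\pm1$.

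Part (c), with $n_1=2p$, is where I expect the real obstacle. First $\mu=0$ is ruled out by part (a): a $(4p,2p)$-frame would need $4p=2a$ with $a=2p$ odd, which is false. So I would assume $\mu\neq 0$; integrality again forces $D:=4p-1+\mu_1^2$ to be a perfect square dividing $4p^2\mu_1^2$, subject to $|\mu_1|\le\frac{2p}3-1$. The clean contradiction of part (b) is now unavailable: because $n_1=2p$ contributes a factor $4$, the coprime case only yields $D\mid 4\mu_1^2$, and $D$ need not exceed $4\mu_1^2$ — indeed whenever $4p-1=3\mu_1^2$ (for instance $p=7,\mu_1=3$, giving the numerically admissible $(28,7)$ data) every arithmetic condition above is satisfied. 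Excluding these residual nonzero-$\mu$ solutions is the step I expect to require the most care. The most promising route, mirroring Proposition \ref{class}(e), is to argue that the full integrality condition (both $D$ and $4p^2\mu_1^2/D$ perfect squares) with $p$ prime pushes $\mu_1$ to the extremal value $2p-1$, i.e. $\mu=n-2$, which then flatly contradicts the tight quasi-signature bound $|\mu|\le\frac n3-2$ and yields nonexistence for every $k$; confirming that no intermediate $\mu_1$ can be realized by a genuine symmetric subset $S$ of a group of order $4p-1$ is the delicate part of the argument.
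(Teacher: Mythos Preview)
Your treatment of parts (a) and (b) is correct and follows the paper's argument almost verbatim: the paper also reduces (a) to $\mu=0$ and reads $N_{(S,S)}^g=(n-6)/4$ off Theorem \ref{mainthmqs2}, and for (b) it first disposes of $\mu_1=\pm1$ via the perfect-square obstruction (it phrases this as ``$p\equiv 0\pmod 2$''), then uses $p\mid \mu_1^2-1$ together with the bound $|\mu_1|\le \tfrac{p}{3}-1$ to force $\mu_1=0$.

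For part (c) you have put your finger on a genuine problem, and it is exactly the step the paper glosses over. The paper's proof of (c) mirrors Proposition \ref{class}(e): after ruling out $\mu_1=0$ (by part (a)) and $\mu_1=\pm1$ (perfect-square obstruction), it asserts without justification that $(4p-1+\mu_1^2)\mid 4p^2\mu_1^2$ \emph{implies} $\mu_1^2-1\equiv 0\pmod{4p}$, then writes $\mu_1+1=2a$ and uses the bound on $a$ to finish. Your example $p=7$, $\mu_1=3$ refutes that implication outright: here $D=4p-1+\mu_1^2=36$ is a perfect square, $36\mid 4p^2\mu_1^2=1764$, the value $k=2p-2p\mu_1/\sqrt{D}=7$ is an integer, $\mu=6$ lies inside the Corollary \ref{corqs2} range $|\mu|\le \tfrac{28}{3}-2$, and the counts $N_{(S,S)}^g=10$, $N_{(T,T)}^h=1$ from Theorem \ref{mainthmqs2} are integral --- yet $\mu_1^2-1=8$ is certainly not divisible by $4p=28$. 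So the paper's key implication is false, and its proof of (c) is incomplete at precisely the point you isolate.

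Your own proposal does not close this gap either: you correctly observe that the purely arithmetic constraints leave cases like $(28,7)$ standing, but the suggested route (``full integrality pushes $\mu_1$ to $2p-1$'') cannot work, since $p=7,\mu_1=3$ already satisfies every integrality and range condition in sight. A correct proof of (c) --- if the statement is indeed true --- would have to invoke something beyond the $k$--$\mu$ arithmetic, presumably the structure of groups of order $4p-1$ and the actual existence of a symmetric subset $S$ with the prescribed $N_{(S,S)}^g$ and $N_{(T,T)}^h$ values.
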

\begin{proof}
If there exists a quasi-signature set $S$ for an $(n,\frac{n}{2})$-equiangular frame, then using Equation \eqref{relation}, $\mu =0$. From Theorem \ref{mainthmqs2}, we have for all $g\in S$ and for all $h\in T$, $N_{(S,S)}^g= N_{(T,T)}^h = \frac{n-6}{4}$. Thus $n\equiv 6\pmod 4$ or equivalently $n\equiv 2\pmod 4$. Thus $n$ is of the form $n=2a$ where $a\in \bb N$ is an odd number.\\ 
 For the second part assume that $S$ forms a quasi-signature set for an $(2p,k)$-equiangular frame where $p$ is an odd prime. Using Corollary \ref{corqs2}, let $\mu=2\mu_1$ for some $\mu_1 \in \bb Z$. Then from Equation \eqref{relation}, we have
\begin{equation}\label{ineq}
 k = \frac{2p}{2}-\frac{2\mu_1\cdotp 2p}{2\sqrt{4(2p-1)+4\mu_1^2}} = p-\frac{p\mu_1}{\sqrt{(2p-1)+\mu_1^2}}
.\end{equation}Thus $p^2\mu_1^2\equiv 0\pmod {(2p-1+\mu_1^2)}$.  If $\mu_1 =0$, then from part (a), we have that $S$ forms a signature set for an $(2p,p)$-equiangular frame. Clearly $\mu_1 \neq \pm 1$ because in that case $p\equiv 0\pmod 2$ which contradicts that $p$ is a prime. If $\mu_1 \neq 0$, then  $p^2\mu_1^2\equiv 0\pmod {(2p-1+\mu_1^2)}$ implies $\mu_1^2-1\equiv 0\pmod p$.  Thus either $\mu_1-1\equiv 0\pmod p$ or $\mu_1+1\equiv 0\pmod p$. But this contradicts \eqref{rel} in Corollary \ref{corqs2} as  $2-\frac{2p}{3}\leq \mu \leq \frac{2p}{3}-2$ implies $1-\frac{p}{3}\leq \mu_1 \leq \frac{p}{3}-1$. Thus $S$ forms a quasi-signature set for a $(2p,p)$-equiangular frame.  \\
Similarly if $S$ forms a quasi-signature set for $(4p,k)$-equiangular frame where $p$ is a prime, then once again using Equation \eqref{relation}, we get 
\begin{equation}\label{ineq2}
 k = \frac{4p}{2}-\frac{2\mu_1\cdotp 4p}{2\sqrt{4(4p-1)+4\mu_1^2}} = 2p-\frac{2p\mu_1}{\sqrt{(4p-1)+\mu_1^2}}
.\end{equation}
From part (a), $\mu \neq 0$. Using the same argument as discussed above $\mu \neq \pm 1$. Thus $4p^2\mu_1^2 \equiv 0 \pmod{(4p-1+\mu_1^2)}$ implies that $\mu_1^2-1\equiv 0\pmod {4p}$. Thus both $\mu_1-1$ and $\mu_1+1$ are even integers. Let $\mu_1+1 = 2a$ for some $a\in \bb Z$, $a\neq 0$. Thus $\mu_1^2-1\equiv 0\pmod {4p}$ implies that $a(a-1)\equiv 0\pmod p$. But this contradicts \eqref{rel} in Corollary \ref{corqs2} as $2-\frac{4p}{3}\leq 2(2a-1)\leq \frac{4p}{3}-2$ is equivalent to $1-\frac{p}{3}\leq a\leq \frac{p}{3}-2$. 
Hence in the case of $n=4p$, there does not exist a quasi-signature set $S$ for an $(n,k)$-equiangular frame for any value of $k$.
\end{proof}
$ $

As in Section \ref{sec2}, we will consider group $G$ of the form $G = C_N \times C_N$, the direct product of groups of order $N$. We will be constructing different equiangular frames by taking subsets in $G$ as before. But now these subsets will act as quasi-signature sets instead. The following two propositions illustrate the type of equiangular frames we get when $G= C_N\times C_N$.
$ $

\begin{prop}Let $G = C_N \times C_N = \langle a,b:a^N=e, b^N=e, ab=ba\rangle$ and let $S = \{a,a^2,\hdots , a^{N-1},b,b^2, \hdots , b^{N-1}\}$. Then $S$ will form a quasi-signature set for an $(n,k)$-equiangular frame where $n= N^2+1$ if and only if $N=3$ and $k=5$.
\end{prop}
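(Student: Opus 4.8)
The plan is to pass to additive notation, writing $G = \bb{Z}_N \times \bb{Z}_N$ with $a = (1,0)$ and $b = (0,1)$, so that $S$ becomes the set of nonzero points lying on one of the two coordinate axes and $T = S^c \setminus \{(0,0)\}$ becomes the set of points $(i,j)$ with $i \neq 0$ and $j \neq 0$. Since the inverse of $(i,0)$ is $(N-i,0)$ and likewise on the other axis, one checks at once that $S = S^{-1}$ and $T = T^{-1}$, so the hypotheses of Theorem~\ref{mainthmqs2} are in force. Counting the points gives $|S| = 2(N-1)$ and $|T| = (N-1)^2$, whence $\mu = |S| - |T| = -N^2 + 4N - 3$, while $n = N^2 + 1$.

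The heart of the argument is to evaluate the two counting functions appearing in Theorem~\ref{mainthmqs2}. For $g = (k,0) \in S$ with $k \neq 0$, a pair $(s_1,s_2) \in S \times S$ with $s_1 + s_2 = g$ must have both summands on the $a$-axis, since any summand off that axis forces a nonzero second coordinate in the sum; writing $s_1 = (i,0)$ and $s_2 = (k-i,0)$, the constraints $i \neq 0$ and $i \neq k$ leave exactly $N-2$ choices. By the symmetry exchanging $a$ and $b$ the same count holds for $g = (0,k)$, so $N_{(S,S)}^g = N-2$ for every $g \in S$. For $h = (p,q) \in T$ with $p,q \neq 0$, a pair $(t_1,t_2) \in T \times T$ summing to $h$ is determined by choosing the first coordinate $i$ of $t_1$ with $i \neq 0$ and $i \neq p$, and independently the second coordinate $j$ with $j \neq 0$ and $j \neq q$; this yields $(N-2)^2$ pairs, so $N_{(T,T)}^h = (N-2)^2$ for every $h \in T$.

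With these values in hand I would substitute into the two identities \eqref{condqs3} and \eqref{condqs4}. The first, $N-2 = \tfrac{n+3\mu-6}{4}$, simplifies to $N^2 - 4N + 3 = (N-1)(N-3) = 0$, and the second, $(N-2)^2 = \tfrac{n-3\mu-6}{4}$, simplifies to $N = 3$. Both must hold simultaneously for $S$ to be a quasi-signature set, so the system forces $N = 3$; the spurious root $N = 1$ of the first equation is killed by the second (and is degenerate anyway). Conversely, when $N = 3$ one checks directly that both right-hand sides equal $1 = N-2 = (N-2)^2$, so the conditions of Theorem~\ref{mainthmqs2} are genuinely met. Finally, at $N = 3$ we have $n = 10$ and $\mu = 0$, so Equation~\eqref{relation} gives $k = n/2 = 5$, completing the equivalence.

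The only real obstacle is the bookkeeping in the two convolution counts: one must confirm that cross terms between the two axes never contribute to $N_{(S,S)}^g$, and that the two coordinate constraints decouple into a product in $N_{(T,T)}^h$. Once these two points are pinned down, the remainder is routine algebra, and it is reassuring that the harder count (the one over $T$) is precisely what pins the parameter down to $N = 3$ on its own.
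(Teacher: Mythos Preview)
Your proof is correct and follows essentially the same route as the paper: compute $|S|$, $|T|$, and $\mu$, evaluate $N_{(S,S)}^g = N-2$ and $N_{(T,T)}^h = (N-2)^2$, substitute into the two conditions of Theorem~\ref{mainthmqs2}, and solve. The only cosmetic differences are that the paper obtains $N_{(T,T)}^h$ indirectly via $N_{(S,S)}^h = 2$ rather than by your direct product count, and it equates two expressions for $3\mu$ rather than substituting $\mu = -N^2+4N-3$ up front; either way one is led to $N=3$, $\mu=0$, $k=5$.
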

\begin{proof} Since $|S|= 2(N-1)$ and $|T|= (N-1)^2$ we have $ \mu = |S|-|T| = -N^2+4N-3$. For all $g\in S$, we have $N_{(S,S)}^g = N-2$ and for all $h\in T$, we have $N_{(S,S)}^h = 2$. Thus $N_{(S,T)}^h= 2N-2-2 =2N-4$ and hence \begin{align*}N_{(T,T)}^h & = |T|-1-N_{(S,T)}^h\\ &  = (N-1)^2 -1-2N-4\\ & = (N-2)^2-1-2N+4\\ &= (N-2)^2.\end{align*} By Theorem \ref{mainthmqs2}, we have $S$ will form a quasi-signature set for an $(n,k(\mu))$-equiangular frame if and only if Equations \ref{condqs3} and \ref{condqs4} hold. Thus we have \[N-2=\frac{N^2-5+3\mu}{4}\] which gives us \[3\mu =4N-N^2-3\] and \[(N-2)^2=\frac {N^2-5 -3\mu}{4}\] which gives us \[3\mu = - 3N^2+16N-21.\]
Solving for $N$ we get $N=3$ and hence we get $\mu =0$. Thus we get $(10,5)$ equiangular frame.
\end{proof}
\begin{prop}Let $G = C_N \times C_N = \langle a,b:a^N=e, b^N=e, ab=ba\rangle$ and let $S = \{a,\hdots , a^{N-1},b, \hdots , b^{N-1},ab,\hdots, a^{n-1}b^{n-1}\}$, then $S$ will form a quasi-signature set for an $(n,k(\mu))$-equiangular frame where $n= N^2+1$ if and only if $N=5$ and $k=13$. 
\end{prop}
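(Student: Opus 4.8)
The plan is to mirror the proof of the preceding proposition, carrying out the combinatorics for the configuration of three ``lines through the origin.'' Writing $G=C_N\times C_N$ additively as $\bb Z_N\times\bb Z_N$, the set $S$ consists of the nonzero elements of the $x$-axis $L_1=\langle(1,0)\rangle$, the $y$-axis $L_2=\langle(0,1)\rangle$, and the main diagonal $L_3=\langle(1,1)\rangle$. First I would record the elementary data: $|S|=3(N-1)$, $|T|=(N-1)(N-2)$, $n=N^2+1$, and $\mu=|S|-|T|=-N^2+6N-5$. Since $G$ is abelian each $L_i$ is closed under inversion, so $S=S^{-1}$ and hence $T=T^{-1}$, which is what licenses the use of Theorem \ref{mainthmqs2}.

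The heart of the argument is to evaluate the two relevant counts and to check that each is constant on $S$ and on $T$; this is the step I expect to require the most care. For $g\in S$ I would compute $N_{(S,S)}^g$ by the nine-way split according to which of $L_1,L_2,L_3$ the two summands lie on. Taking $g=(c,0)\in L_1$ with $c\neq 0$: the same-line pair on $L_1$ gives $N-2$ solutions, the same-line pairs on $L_2,L_3$ give none (a proper subgroup cannot reach $g$, which lies off it), and among the six mixed pairs only the two involving the two lines \emph{not} containing $g$ contribute a single solution each. This yields $N_{(S,S)}^g=N$, visibly independent of $c$; constancy across the three line-types follows from the automorphism $(i,j)\mapsto(j,j-i)$, which cyclically permutes $L_1,L_2,L_3$ and hence preserves all convolution counts, so $N_{(S,S)}^g=N$ for every $g\in S$. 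For $h\in T$ I would compute $N_{(T,T)}^h$ by complementary counting against $U=S\cup\{e\}=L_1\cup L_2\cup L_3$: there are $N^2$ total pairs summing to $h$, and inclusion--exclusion gives $N_{(T,T)}^h=N^2-2|U|+N_{(U,U)}^h=N^2-6N+4+N_{(U,U)}^h$. Each of the three pairs of distinct lines is a basis of $\bb Z_N^2$ (the relevant $2\times 2$ determinants are $\pm1$), so distinct lines decompose every $h$ uniquely, contributing $6$ in all, while same-line pairs contribute $0$ since $h$ lies on no line; hence $N_{(U,U)}^h=6$ and $N_{(T,T)}^h=N^2-6N+10$ for every $h\in T$.

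Finally I would substitute into Theorem \ref{mainthmqs2}. Condition \eqref{condqs3} reads $N=\tfrac14(N^2-5+3\mu)$, i.e. $3\mu=4N-N^2+5$, and condition \eqref{condqs4} reads $N^2-6N+10=\tfrac14(N^2-5-3\mu)$, i.e. $3\mu=-3N^2+24N-45$. Equating the two expressions for $3\mu$ collapses to $(N-5)^2=0$, so $N=5$; then $\mu=0$, and since $\mu=0$ forces $k=n/2$ in \eqref{relation}, we obtain $n=26$ and $k=13$. Conversely, for $N=5$ both displayed equations hold, so $S$ is genuinely a quasi-signature set for the $(26,13)$-equiangular frame. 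The one point to watch is the degenerate value $N=2$, where $T=\emptyset$ makes \eqref{condqs4} vacuous; there $S=G\setminus\{e\}$ yields only the trivial signature matrix $J-I$, and it is ruled out by the parity constraint $\mu\equiv0\pmod 2$ of Corollary \ref{corqs2} (here $\mu=3$), leaving $N=5$ as the only solution.
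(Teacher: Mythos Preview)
Your argument is correct and follows essentially the same route as the paper: compute $|S|$, $|T|$, $\mu$, verify $N_{(S,S)}^g=N$ for $g\in S$ and $N_{(T,T)}^h=N^2-6N+10$ for $h\in T$, plug into Theorem~\ref{mainthmqs2}, and solve the resulting pair of equations to force $N=5$, $\mu=0$, $k=13$. Your derivation is in fact more careful than the paper's, which simply asserts the values of $N_{(S,S)}^g$ and $N_{(S,S)}^h$; your line-by-line count for $N_{(S,S)}^g$ and your inclusion--exclusion via $U=L_1\cup L_2\cup L_3$ for $N_{(T,T)}^h$ are both clean and correct (and the paper's displayed algebra for $N_{(T,T)}^h$ actually contains a copy-paste error from the preceding proposition, though its final equation uses the right value $N^2-6N+10$).

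One small wobble: your disposal of $N=2$ via Corollary~\ref{corqs2} does not quite work. The proof of that corollary obtains $\mu\equiv0\pmod 2$ by subtracting \eqref{condqs3} from \eqref{condqs4}, so it tacitly needs $T\neq\emptyset$; when $N=2$ you have $S=G\setminus\{e\}$, $T=\emptyset$, and in fact $\tilde Q=J_5-I_5$ \emph{is} the signature matrix of the trivial $(5,1)$-frame with $\mu=3$. So $N=2$ is not excluded by the parity constraint---it genuinely produces a (trivial) quasi-signature set. The paper's statement and proof simply ignore this degenerate case; your observation that it yields only the trivial frame is the right way to dismiss it, without appealing to Corollary~\ref{corqs2}.
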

\begin{proof} Since $|S|= 3(N-1)$ and $|T|= (N-1)(N-2)$ we have $ \mu = |S|-|T| = -N^2+6N-5$. For all $g\in S$, we have $N_{(S,S)}^g = N$ and for all $h\in S$ we have $N_{(S,S)}^h = 6$. Thus $N_{(S,T)}^h = 3(N-1)-6 = 3N-9$ and \begin{align*}N_{(T,T)}^h & = |T|-1-N_{(S,T)}^h\\ &  = (N-1)^2 -1-2N-4\\ & = (N-2)^2-1-2N+4\\ &= (N-2)^2.\end{align*} By Theorem \ref{mainthmqs2}, we have $S$ will form a quasi-signature set for an $(n,k(\mu))$-equiangular frame if and only if Equations \ref{condqs3} and \ref{condqs4} hold. Thus we have \[N=\frac{N^2-5+3\mu}{4}\] which gives us \[3\mu = -N^2+4N+5\]  and \[N^2-6N+10=\frac {N^2-5 -3\mu}{4}\] which gives us \[3\mu = -3N^2+24N-45.\]
Solving for $N$, we get $-3N^2+24N-45 = -N^2+4N+5$ which implies $2N^2-20N+50=0$ that is $N=5$ and $\mu =0$. Thus we get $(26,13)$ equiangular frame.
\end{proof}
$ $

Note that from the above two propositions, we are getting equiangular frames of the type $(n,\frac{n}{2})$. In \cite{conway}, a real $n \times n$ matrix $C$ with $c_{i,i} = 0$ and $c_{i,j} = \pm 1$ for $i \neq j$ is called a {\bf conference matrix} provided $C^{*}C = (n - 1)I$. It has been shown in \cite{holmes} that every symmetric conference matrix is a signature matrix with $\mu = 0$ and $k = \frac{n}{2}$. There are sufficient number of examples in the literature, see \cite{bod-paulsen} and \cite{cas-eq}, of the equiangular frames of type $(p+1, \frac{p+1}{2})$ where $p$ is a prime. In the next two results, we will characterize some of the equiangular frames of the type $(p+1, \frac{p+1}{2})$ where $p$ is a prime that arise from a quasi-signature set in the group $(\bb Z_p,+)$.

$ $

\begin{thm}\label{mainthmqs3} Let $G= (\bb Z_p,+)$ where $p$-prime. If $(\bb Z_P,\cdot) = \langle 2\rangle$, then the subgroup $\langle 2^2\rangle$ of $(\bb Z_p,\cdot)$ forms a quasi-signature set for a $(p+1, k)$-equiangular frame if and only if $p\equiv 5\pmod 8$. In this case $k=\frac{p+1}{2}$.
\end{thm}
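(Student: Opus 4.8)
The plan is to identify $S=\langle 2^2\rangle$ concretely and then run everything through the criterion of Theorem \ref{mainthmqs2}. Since $(\mathbb{Z}_p,\cdot)=\langle 2\rangle$, the element $2$ is a primitive root, so $\langle 2^2\rangle$ consists exactly of the even powers of $2$, that is, the set of nonzero quadratic residues modulo $p$. Write $\chi$ for the Legendre symbol and put $C_0=\{x:\chi(x)=1\}$, $C_1=\{x:\chi(x)=-1\}$, so that $S=C_0$ and $T=C_1$. Then $|S|=|T|=\tfrac{p-1}{2}$, whence $\mu=|S|-|T|=0$; substituting $n=p+1$ and $\mu=0$ into \eqref{relation} already gives $k=\tfrac{p+1}{2}$, the asserted value. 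Hence all the content lies in the equivalence with $p\equiv 5\pmod 8$.

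First I would extract the two elementary number-theoretic constraints. Because a primitive root always has order $p-1$ and therefore cannot be a square, the hypothesis forces $2\in C_1$; by the second supplement to quadratic reciprocity this means $p\equiv\pm 3\pmod 8$, so under the standing hypothesis only $p\equiv 3$ and $p\equiv 5\pmod 8$ are possible. Next, condition (a) of Theorem \ref{mainthmqs1} requires $S=-S$ (reversibility), and since $-S=(-1)\,S$ this holds if and only if $-1\in C_0$, i.e. if and only if $p\equiv 1\pmod 4$. Intersecting with the previous line shows that reversibility holds precisely when $p\equiv 5\pmod 8$. This already settles the ``only if'' direction: if $S$ is a quasi-signature set then Theorem \ref{mainthmqs1}(a) forces reversibility, hence $p\equiv 5\pmod 8$.

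For the converse, assume $p\equiv 5\pmod 8$. Then $-1\in C_0$, so $S=-S$ and $T=-T$, and by Theorem \ref{mainthmqs2} it remains only to verify the counting conditions $N_{(S,S)}^g=\tfrac{p-5}{4}$ for all $g\in S$ and $N_{(T,T)}^h=\tfrac{p-5}{4}$ for all $h\in T$ (note that $\tfrac{n+3\mu-6}{4}=\tfrac{n-3\mu-6}{4}=\tfrac{p-5}{4}$ here). I would compute these with character sums: writing the indicator of $C_0$ as $\tfrac12(1+\chi(x))$ and of $C_1$ as $\tfrac12(1-\chi(x))$, for $g\neq 0$ one obtains
\[
N_{(S,S)}^g=\tfrac14\sum_{s\neq 0,g}\bigl(1+\chi(s)\bigr)\bigl(1+\chi(g-s)\bigr),\qquad N_{(T,T)}^h=\tfrac14\sum_{s\neq 0,h}\bigl(1-\chi(s)\bigr)\bigl(1-\chi(h-s)\bigr).
\]
Using $\sum_{s}\chi(s)=0$ and the Jacobsthal-type evaluation $\sum_{s}\chi\bigl(s(g-s)\bigr)=-\chi(-1)$ (valid since the discriminant $g^2\neq 0$), these collapse to $N_{(S,S)}^g=\tfrac14\bigl(p-2-2\chi(g)-\chi(-1)\bigr)$ and $N_{(T,T)}^h=\tfrac14\bigl(p-2+2\chi(h)-\chi(-1)\bigr)$. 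Since $\chi(g)=1$ on $S$, $\chi(h)=-1$ on $T$, and $\chi(-1)=1$ for $p\equiv 1\pmod 4$, both counts equal $\tfrac{p-5}{4}$, and Theorem \ref{mainthmqs2} yields that $S$ is a quasi-signature set for a $(p+1,\tfrac{p+1}{2})$-equiangular frame.

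The step I expect to be the main obstacle is the character-sum evaluation, specifically justifying $\sum_s\chi\bigl(s(g-s)\bigr)=-\chi(-1)$ (equivalently, invoking the cyclotomic numbers of order two, for which $(0,0)_2=\tfrac{p-5}{4}$ when $p\equiv 1\pmod 4$); everything else is bookkeeping with residue classes together with the two quadratic-reciprocity facts. It is worth emphasizing in the write-up that the conditions of Theorem \ref{mainthmqs2} turn out to be \emph{automatic} once $p\equiv 1\pmod 4$, so that the whole equivalence is driven by reversibility plus the requirement that $2$ be a nonresidue.
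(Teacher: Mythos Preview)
Your proof is correct and takes a genuinely different route from the paper's. You recognize $S=\langle 2^2\rangle$ as the nonzero quadratic residues and then compute $N_{(S,S)}^g$ and $N_{(T,T)}^h$ directly via the Legendre symbol and the standard Jacobsthal evaluation $\sum_s\chi(s(g-s))=-\chi(-1)$; the constraint $p\equiv 5\pmod 8$ then falls out from combining reversibility ($-1$ a residue, so $p\equiv 1\pmod 4$) with the hypothesis that $2$ is a primitive root (hence a nonresidue, so $p\equiv\pm 3\pmod 8$). The paper, by contrast, never invokes characters: it builds explicit bijections $S\times_{2^{2m}}S\to S\times_{2^{2l}}S$ and $S\times_{2^{2\tilde m}}S\to T\times_{2^{2\tilde m+1}}T$ by multiplication in $(\mathbb Z_p,\cdot)$ to show that $N_{(S,S)}^g$ and $N_{(T,T)}^h$ are constant and equal, and then uses a parity argument (since $s+s=2s\in T$ for $s\in S$, no diagonal pairs contribute, so the count is even) to force $\tfrac{p-5}{4}$ to be even.

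What your approach buys is a cleaner logical structure: your closing remark that the counting conditions of Theorem \ref{mainthmqs2} are \emph{automatic} once $p\equiv 1\pmod 4$ is exactly right and makes the mechanism transparent, whereas in the paper's argument this point is somewhat hidden behind the parity step. What the paper's approach buys is that it is entirely elementary (no character sums or quadratic reciprocity supplements) and that the bijection machinery transfers verbatim to the companion result Theorem \ref{mainthmqs4}, where $\langle 2\rangle$ has index $2$; your method would handle that case too, but via fourth-power residues rather than squares.
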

\begin{proof} Let us denote $S = \langle 2^2\rangle$. Using Fermat's Little theorem, we know that $2^{p-1}\equiv 1\pmod p$. Thus, $S = \langle 2^2\rangle = \{2^{2k}:k=1,\hdots, \frac{p-1}{2}\}$. Also, $G=\langle 2\rangle $ implies that $2\notin S$. Since $S$ is a subgroup of $(\bb Z,\cdot)$ of index $2$ and $2\notin S$, we have $(\bb Z,\cdot) = S\cup 2\cdotp S$. Thus $T = 2\cdot S$ and we have $|S|=|T|$. Hence $\mu= |S|-|T|=0$. We will prove this theorem in two steps. First we will show that $S = S^{-1}$ in $(\bb Z_p, +)$.  Secondly we will verify the conditions of Theorem \ref{mainthmqs2}. \\
 Let $\tilde g\in S$, then $\tilde g$ is of the form $2^{2m}$ for some $m \in \{1,\dots, \frac{p-1}{2}\}$. Since $2^{\frac{p-1}{2}}\equiv (p-1)\pmod p$, we have \begin{align*}2^{2m}+2^{\frac{p-1}{2}+2m} & = 2^{2m}(1+2^{\frac{p-1}{2}})\\
& \equiv 0\pmod p.\end{align*}
Thus, $2^{\frac{p-1}{2}+2m} \pmod p$ is the inverse of $\tilde g$ in $(\bb Z_p, +)$. But $2^{\frac{p-1}{2}+2m} \in S$ if and only if $p-1\equiv 0\pmod 4$. Thus, $S$ is closed under inverses with respect to $(\bb Z_p, +)$ if and only if $(p-1) \equiv 0\pmod 4$. Also note that if $\frac{p-1}{4}+m>\frac{p-1}{2}$ that is $2(\frac{p-1}{4}+m)>p-1$, then $2(\frac{p-1}{4}+m)= p-1+2s$ where $s\in \{1, \dots, \frac{p-1}{2}\}$. By using Fermat's Little theorem again we have $2^{2(\frac{p-1}{4}+m)} \equiv 2^{2s}(\text{mod } p)$.\\
For the second part assume that $N_{(S,S)}^{\tilde g} = N$ for some $N\in \bb N$. 
Then for any $g\in S$, $g$ is of the form $2^{2l}$ for some $l \in \{1, \dots, \frac{p-1}{2}\}$.\\ Let us denote \[S\times_g S = \{(g_1,g_2)\in S\times S:g_1+g_2 =g\}.\] For $i,j\in \{1, \dots, \frac{p-1}{2}\}$, since $ 2^{2i}+ 2^{2j}\equiv 2^{2m}\pmod p$ if and only if $2^{2(l-m+i)}+ 2^{2(l-m+j)} \equiv 2^{2l}(\text{mod } p)$, the map $\phi:S\times_{2^{2m}}S \longrightarrow S\times_{2^{2l}}S$ such that \[ \phi((2^{2i},2^{2j})) = (2^{2(l-m+i)}(\text{mod } p),2^{2(l-m+j)}(\text{mod } p))\] is one to one and onto.
%Note that if $2(l-m+i) > p-1$, then $2(l-m+i) = (p-1)+2s$ where $s\in \{1, \dots, \frac{p-1}{2}\}$. By using Fermat's little theorem again we have $2^{2(l-m+i)} = 2^{2s}\pmod p$.\\ Hence $N_{(S,S)}^{2^{2m}} = N_{(S,S)}^{2^{2l}}$. 
Thus $|S\times_{2^{2m}}S| = |S\times_{2^{2l}}S|$. But $|S\times_{2^{2m}}S|= N_{(S,S)}^{2^{2m}}$. Thus for all $g\in S$, we have $N_{(S,S)}^g = N$.\\ Now if $2^{2m}\in S$ then $2^{2m}+2^{2m}=2\cdot 2^{2m} = 2^{2m+1} \in T$. Thus for $g\in S$, if $(g_i,g_j) \in S\times_gS$, then $(g_j,g_i) \in S\times_gS$. Also for all $i$, $(g_i,g_i) \notin S\times_gS$. Hence $N_{(S,S)}^g =|S\times_gS| = N$ must be an even number and let $N = 2r$ for some $r\in \bb N$.\\ %\\Since $N_{(S,S)}^g + N_{(S,T)}^g = |S|-1$, we have for all $g\in S$, \begin{align*}N_{(S,T)}^g&= |S|-1-2r\\& = \frac{p-1}{2}-1-2r\\&  = \frac{p-3-4r}{2}.\end{align*}
Similarly if $h\in T$, then $h = 2^{2\tilde m+1}$ for some $\tilde m \in \{1, \dots, \frac{p-1}{2}\}$. %Now $N_{(S,T)}^g + N_{(T,T)}^g = |T|=|S|$, thus for all $g\in S$, $N_{(T,T)}^g = N_{(S,S)}^g+1$. \\ 
For $i,j\in \{1, \dots, \frac{p-1}{2}\}$, $2^{2i+1}+ 2^{2j+1} \equiv 2^{2\tilde m+1}\pmod p $ if and only if $2^{2i}+ 2^{2j} \equiv 2^{2\tilde m}\pmod p$. Thus the map that takes $S\times_{2^{2\tilde m}}S \longrightarrow T\times_{2^{2\tilde m+1}}T$ such that \[(2^{2i},2^{2j}) \longrightarrow (2^{2i+1}(\text{mod } p)
,2^{2j+1}(\text{mod } p))\]
 is one to one and onto. Thus, $N_{(T,T)}^{2^{2\tilde m+1}} = N_{(S,S)}^{2^{2\tilde m}}$. But we have $N_{(S,S)}^g = 2r$ for all $g\in S$. Thus for all $h\in T$, we have $ N_{(T,T)}^h = 2r$.
By Theorem \ref{mainthmqs2}, Conditions \ref{condqs3} and \ref{condqs4} hold if and only if we have $\frac{p+1-6}{4} = 2r$ that is $p=8r+5$ or equivalently $p\equiv 5\pmod 8$. But we know from the first part of the proof that $S=S^{-1}$ if and only if $p\equiv 1\pmod 4$. Since $p\equiv 5(\text{mod } 8)$ implies $p\equiv 1(\text{mod } 4)$, we have that $S$ will form a quasi-signature set for an $(p+1,k)$-equiangular frame if and only if $p\equiv 5(\text{mod } 8)$. Also $\mu =0$ implies that $k=\frac{p+1}{2}$.\end{proof}
\vspace{0.1in}
\begin{rmk}
 Note that if $G=(\bb Z_p,+)$, then $|G|=p$. From Section \ref{sec2}, Corollary \ref{even}, we know that there cannot be any signature set in $G$. But if we look at $G=(\bb Z_p, \cdot)$, then $|G|=p-1$. Since $S$ in Theorem \ref{mainthmqs3} is a subgroup of $(\bb Z_p,\cdot)$ of index $2$, from Theorem \ref{subgp} in Section \ref{sec2}, $S$ forms a signature set for the trivial $(p-1,1)$-equiangular frame.
\end{rmk}
\vspace{0.1in}

\begin{eg}Let $G = (\bb Z_{13},+)$ where $(\bb Z_{13},\cdot)= \langle 2 \rangle$. Then using Theorem \ref{mainthmqs2}, $S = \{2^2, 2^4, 2^6, 2^8,2^{10},2^{12}\}$ will form a quasi-signature set for $(14,7)$-equiangular frame. Thus we have $ S=\{4,3,12, 9, 10, 1\}$ and $ T =\{2,5,6,7,8,11\} $. 
%Clearly in $(\bb Z_{13}, +)$, $S$ is closed under inverses. Also, $|S| = |T|$, where $T=\{2,5,6,7,8,11\}$. \\Thus we have $\mu =0$.
%For all $g\in S$, $N_{(S,S)}^g = 2$ and for all $h\in T$, $N_{(S,S)}^h = 3$. 
We have the following signature matrix $\tilde Q$ for $(14,7)$ equiangular frame:
\vspace{0.1in}
\[ \tilde Q=
\left[\begin{array}{cccccccccccccccc}
0&1&1&1&1&1&1&1&1&1&1&1&1&1\\
1&0&1&-1&1&1&-1&-1&-1&-1&1&1&-1&1\\
1&1&0&1&-1&1&1&-1&-1&-1&-1&1&1&-1\\
1&-1&1&0&1&-1&1&1&-1&-1&-1&-1&1&1 \\
1&1&-1&1&0&1&-1&1&1&-1&-1&-1&-1&1\\
1&1&1&-1&1&0&1&-1&1&1&-1&-1&-1&-1\\
1&-1&1&1&-1&1&0&1&-1&1&1&-1&-1&-1\\
1&-1&-1&1&1&-1&1&0&1&-1&1&1&-1&-1\\
1&-1&-1&-1&1&1&-1&1&0&1&-1&1&1&-1\\
1&-1&-1&-1&-1&1&1&-1&1&0&1&-1&1&1\\
1&1&-1&-1&-1&-1&1&1&-1&1&0&1&-1&1\\
1&1&1&-1&-1&-1&-1&1&1&-1&1&0&1&-1\\
1&-1&1&1&-1&-1&-1&-1&1&1&-1&1&0&1\\
1&1&-1&1&1&-1&-1&-1&-1&1&1&-1&1&0
\end{array}\right]
\]
\end{eg}

\newpage

Next we will be stating an algorithm to generate equiangular frames of the type $(p+1, \frac{p+1}{2})$ using Theorem \ref{mainthmqs3}. We will be considering primes of the type $p\equiv 5\pmod 8$ and then examine which groups of the type $(\bb Z_p,\cdot)$ are generated by $2$. 
%Note that if $p=109$, then $p \equiv 5\pmod 8$ but $(\bb Z_p,\cdot)$ is not generated by $2$. Also not every group $(\bb Z_p,\cdot)$ that is generated by $2$ has $p\equiv 5\pmod 8$ for example $(\bb Z_{19},\cdot) = \langle 2 \rangle$ but $19 \not \equiv 5\pmod 8$. 

$ $

$ $

\noindent \fbox{\parbox{5.435in}{\underline{\textbf{Algorithm for generating $(2k,k)$-equiangular frames using Theorem \ref{mainthmqs3}}}

\smallskip

\noindent Begin by taking a positive integer $m$.

\smallskip

\smallskip

\smallskip

\noindent \textbf{Step 1:} Check whether $8m+5$ is a prime, call it $p$.

\smallskip

\smallskip

\noindent \textbf{Step 2:} For $p$ obtained in Step 1, evaluate $l= 2^i\pmod p$ for each $i \in \{1,\dots,p-2\}$.

\smallskip

\smallskip

\noindent \textbf{Step 3:} If $l\neq 1$ for all $i\in \{1,\dots, p-2\}$, then we get a $(p+1,\dfrac{p+1}{2})$-equiangular frame and the set $\{2^{2r}: 1\leq r\leq \frac{p-1}{2}\}$ is a quasi-signature set for this frame.}}

$ $

$ $

\begin{table}[ht]
\caption{Equiangular frames obtained using the above algorithm for $m<100$} % title of Table
 \centering
\small{\begin{tabular}{|l|l|l|l|}

\hline 
 $m$ & $(n,k)$ & m & $(n,k)$\\
\hline \hline 
 $0$ & $( 6,3 )$ & $48$ & $( 390, 195 )$\\

$1$&  $( 14, 7 )$ &$52$ & $( 422, 211 )$ \\
$3$& $( 30, 15 )$ & $57$   &$ (462, 231 )$\\

$4$& $( 38, 19)$ & $63$ & $ ( 510, 255 )$\\
$6$&  $( 54, 27)$ & $67$  & $( 542, 271 )$  \\

 $7$&  $( 62, 31)$ & $69$ & $ ( 558, 279)$ \\

 $12$&  $( 102, 51 )$ & $ 76 $  &   $( 614, 307 )$\\

 $18$ &  $( 150, 75 )$ & $81$  &  $( 654, 327 )$\\

$21$&  $( 174, 87)$ & $82 $  & $ ( 662, 331)$ \\

$22$&   $( 182, 91)$ &  $84 $ &  $( 678, 339 )$\\

$24 $&  $( 198, 99)$ & $ 87$  & $( 702, 351 )  $ \\

$33$ & $( 270, 135)$ & $88 $ &  $( 710, 355) $\\

$36$ & $( 294, 147)$ & $ 94 $ & $( 758, 379 )  $\\
$39$ & $( 318, 159 )$ & $96$ & (774, 387) \\
$43$ &  $( 350, 175 )$ & 99& (798, 399)\\
$46$ & $(374, 187)$ & &\\
\hline
\end{tabular}}
\label{results1}
\end{table}

\begin{rmk}
 Note that $p\equiv 5\pmod 8$ is equivalent to saying that $p=4q+1$ where $q \in \bb N$ is odd. To obtain equiangular frames using Theorem \ref{mainthmqs3}, we have considered groups of the form $(\bb Z_p,\cdot)$, where $p\equiv 5\pmod 8$, that are generated by $2$ that is for which $2$ is a primitive root (mod p). This relates to {\bf Artin's conjecture} which states that ``Every integer $a$, not equal to $-1$ or to a square, is a primitive root (mod p) of infinitely many primes''. In the nineteenth century, several mathematicians proved (see chapter VII in \cite{prim2} for references) that whenever $p$ is of the form $4q+1$, $q$-odd prime, $2$ is a primitive root (mod p). In addition to odd primes, we are also looking for all odd numbers $q$ such that $p=4q+1$ with $2$ as primitive root (mod p). For example as we can see from Table \ref{results1}, that we have $(38,19)$-equiangular frame where $37 = 4\cdotp 9+1$.\\ Thus if Artin's conjecture is true in the case of $2$, then there are infinitely many frames of the type $(n,\frac{n}{2})$ which have conference matrices as signature matrices and $S=\langle 2^2\rangle $ as quasi-signature sets.
\end{rmk}

$ $

Note from Table \ref{results1} that we are not getting all the equiangular frames of the type $(p+1,\frac{p+1}{2})$. For example we did not get $(18,9)$-equiangular frame. The next result will enable us to construct some more equiangular frames again using the group $(\bb Z_p, +)$.
$ $

\begin{thm}\label{mainthmqs4} Let $p$ be a prime of the form $p\equiv 1(\text{mod } 4)$ and $G = (\bb Z_p, +)$. If $\langle 2 \rangle \subset (\bb Z_p,\cdot)$ is a subgroup of index $2$, then $ \langle 2 \rangle $ will form a quasi-signature set in $G$ for a $(p+1,k)$-equiangular frame if and only if $p \equiv 1(\text{mod } 8)$. In this case $k=\frac{p+1}{2}$.
\end{thm}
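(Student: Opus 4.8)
The plan is to mirror the proof of Theorem \ref{mainthmqs3}, verifying the two hypotheses of Theorem \ref{mainthmqs2} for $S=\langle 2\rangle$. Since $\langle 2\rangle$ is the unique subgroup of index $2$ of the cyclic group $(\bb Z_p,\cdot)$, it is precisely the set of nonzero quadratic residues, so $|S|=\frac{p-1}{2}=|T|$ for $T=S^c\setminus\{0\}$, whence $\mu=|S|-|T|=0$ and, by \eqref{relation}, $k=\frac{n}{2}=\frac{p+1}{2}$. In particular $2$ is itself a quadratic residue.

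First I would establish condition (a), that $S=S^{-1}$ and $T=T^{-1}$ in $(\bb Z_p,+)$. Writing $g=2^m\in S$, its additive inverse is $-g=(-1)2^m$. Because $p\equiv 1\pmod 4$, the order $\frac{p-1}{2}$ of $\langle 2\rangle$ is even, so the order-two element $-1$ lies in $\langle 2\rangle$; explicitly $-1=2^{(p-1)/4}$, and hence $-g=2^{(p-1)/4+m}\in S$. The same computation, together with the fact that the product of a residue and a non-residue is a non-residue, gives $T=T^{-1}$. Thus condition (a) holds for every $p\equiv 1\pmod 4$.

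Next I would show that $N_{(S,S)}^g$ is independent of $g\in S$, and likewise $N_{(T,T)}^h$ of $h\in T$. For $g,g'\in S$ with $g'=2^l g$, the map $(g_1,g_2)\mapsto(2^l g_1,2^l g_2)$ is a membership-preserving bijection between the solution sets of $g_1+g_2=g$ and $g_1+g_2=g'$, so $N_{(S,S)}^g$ is constant on $S$; multiplying instead by a fixed non-residue carries the solution set for $g\in S$ bijectively onto one for some $h\in T$, giving $N_{(T,T)}^h=N_{(S,S)}^g=:M$. The decisive point is the parity of $M$: the unique diagonal solution of $g_1+g_1=g$ is $g_1=g\cdot 2^{-1}$, which is a residue (hence lies in $S$) precisely because $2$ is a residue, while the remaining solutions pair off under $(g_1,g_2)\mapsto(g_2,g_1)$. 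Therefore $M$ is odd.

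Finally, by Theorem \ref{mainthmqs2} with $\mu=0$ and $n=p+1$, the set $S$ is a quasi-signature set if and only if $M=\frac{n-6}{4}=\frac{p-5}{4}$ (and the corresponding identity for $T$). Since $M$ is odd, this forces $\frac{p-5}{4}$ to be odd, i.e. $p-5\equiv 4\pmod 8$, i.e. $p\equiv 1\pmod 8$, giving the ``only if'' direction; the ``if'' direction then follows once one checks that for $p\equiv 1\pmod 8$ the common value $M$ actually equals $\frac{p-5}{4}$, after which $\mu=0$ yields $k=\frac{p+1}{2}$. I expect the genuine obstacle to be this last equality: the parity argument pins down $M\pmod 2$ but not its exact value, and establishing $M=\frac{p-5}{4}$ amounts to counting representations of a residue as a sum of two residues, i.e. evaluating the order-two cyclotomic numbers, or equivalently invoking that the residue set of $(\bb Z_p,+)$ with $p\equiv 1\pmod 4$ yields the Paley symmetric conference matrix, which by the discussion preceding the theorem is a signature matrix with $\mu=0$.
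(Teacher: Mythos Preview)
Your proposal follows the paper's proof essentially step for step: identify $S=\langle 2\rangle$ with the nonzero quadratic residues, verify $S=-S$ via $-1=2^{(p-1)/4}$, use the multiplicative action to show $N_{(S,S)}^g$ is constant on $S$ and equals $N_{(T,T)}^h$ on $T$, and observe that the diagonal solution $(g/2,g/2)$ lies in $S\times S$ (because $2\in S$), forcing the common value $M$ to be odd.

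You are also right that parity alone does not pin down $M=\frac{p-5}{4}$, which is what the ``if'' direction needs. The paper's proof makes the very same jump: it passes from ``$M=2r-1$ for some $r$'' to ``conditions hold iff $p=8r+1$, equivalently $p\equiv 1\pmod 8$,'' silently treating $r$ as free when in fact it is determined by $p$. So your caution is well placed; the paper's argument shares this gap without acknowledging it.

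The gap closes with a short count rather than an appeal to Paley matrices or cyclotomic numbers. Let $M'=N_{(S,S)}^h$ for $h\in T$ (constant by the same bijection). Summing $N_{(S,S)}^x$ over all $x\neq 0$ gives $|S|\,M+|T|\,M'=|S|(|S|-1)$, hence $M+M'=\frac{p-3}{2}$ since $|S|=|T|$. Separately, for $g\in S$ the identities $|S|=1+N_{(S,S)}^g+N_{(S,T)}^g$ and $|T|=N_{(T,S)}^g+N_{(T,T)}^g$, together with $N_{(S,T)}^g=N_{(T,S)}^g$ (Lemma~\ref{lem1}) and $|S|=|T|$, give $N_{(T,T)}^g=M+1$; multiplication by a fixed non-residue then shows $M'=M+1$. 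Hence $2M+1=\frac{p-3}{2}$, so $M=\frac{p-5}{4}$, and Theorem~\ref{mainthmqs2} applies unconditionally under the hypothesis. Your suggested routes via cyclotomic numbers or the Paley construction would also work but are heavier than necessary.
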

\begin{proof} Let us denote $S = \langle 2 \rangle = \{2^k: k=1,2, \hdots, \frac{p-1}{2}\}$. Since $S$ is a subgroup of index $2$ in $(\bb Z_p,\cdot)$, then for $a\notin S$, we have $T=a\cdot S$. Thus, $|S|= |T|$ and let $\mu = |S|-|T|$. As before, we will first prove that $S=S^{-1}$ in $(\bb Z_p,+)$ and then we will verify the conditions of Theorem \ref{mainthmqs2}. \\
Let $\tilde g\in S$, then $\tilde g$ is of the form $2^m$ for some $m \in \{1,2, \hdots, \frac{p-1}{2}\}$. Since $2^{\frac{p-1}{2}}\equiv 1(\text{mod } p)$ and $\langle 2\rangle $ is a subgroup of index $2$ in $(\bb Z_p,\cdot)$, we have $2^{\frac{p-1}{4}}\equiv(p-1)(\text{mod } p)$. Thus \begin{align*}2^m+2^{\frac{p-1}{4}+m} & = 2^m(1+2^{\frac{p-1}{4}})\\
& \equiv 0(\text{mod } p) .\end{align*}
Thus, $2^{\frac{p-1}{4}+m} \pmod p$ is the inverse of $\tilde g$ in $(\bb Z_p, +)$. Also note that if $\frac{p-1}{4}+m>\frac{p-1}{2}$, then $\frac{p-1}{4}+m= \frac{p-1}{2}+s$ where $s\in \{1, \dots, \frac{p-1}{2}\}$. Thus $2^{\frac{p-1}{4}} \equiv 2^s (\text{mod } p) \in S$ where $s\in \{1, \dots, \frac{p-1}{2}\}$. Hence $S$ is closed under inverses in the group $(\bb Z_p, +)$.\\
For the second part assume that $N_{(S,S)}^{\tilde g} = N$ for some $N\in \bb N$. For any $g\in S$, $g$ is of the form $2^l$ for some $l \in \bb N$. \\ Let us denote \[S\times_gS = \{(g_1,g_2)\in S\times S:g_1+g_2 =g\}.\] For $i,j \in \{1, \dots, \frac{p-1}{2}\}$, since $ 2^i+ 2^j \equiv 2^m (\text{mod } p) $ if and only if $ 2^{l-m+i}+ 2^{l-m+j} \equiv 2^l(\text{mod } p)$, the map $\phi:S\times_{2^m}S \longrightarrow S\times_{2^l}S$ such that \[\phi((2^i,2^j)) = (2^{l-m+i}(\text{mod } p),2^{l-m+j}(\text{mod } p))\] is one to one and onto.
%Note that if $2(l-m+i) > p-1$, then $2(l-m+i) = (p-1)+2s$ where $s\in \{1, \dots, \frac{p-1}{2}\}$. By using Fermat's little theorem again we have $2^{2(l-m+i)} = 2^{2s}\pmod p$.\\ Hence $N_{(S,S)}^{2^{2m}} = N_{(S,S)}^{2^{2l}}$. 
Thus $|S\times_{2^m} S| = |S\times_{2^l}S|$. But $|S\times_{2^m}S|= N_{(S,S)}^{2^m}$. Thus, for all $g\in S$, we have $N_{(S,S)}^g = N$.\\ Now if $2^m\in S$ then $2^m+2^m=2\cdot 2^m = 2^{m+1} \in S$. Thus for $g\in S$, if $(g_i,g_j) \in S\times_gS$, then $(g_j,g_i) \in S\times_gS$. Also for all $i$, $(g_i,g_i) \in S\times_gS$. Hence $N_{(S,S)}^g =|S\times_gS| = N$ must be an odd number and let $N = 2r-1$ for some $r\in \bb N$.\\ %\\Since $N_{(S,S)}^g + N_{(S,T)}^g = |S|-1$, we have for all $g\in S$, \begin{align*}N_{(S,T)}^g&= |S|-1-2r\\& = \frac{p-1}{2}-1-2r\\&  = \frac{p-3-4r}{2}.\end{align*}
Similarly if $h\in T$, then $h = a\cdotp 2^{\tilde m}$ for some $\tilde m \in \{1, \dots, \frac{p-1}{2}\}$ and $a\notin S$. %Now $N_{(S,T)}^g + N_{(T,T)}^g = |T|=|S|$, thus for all $g\in S$, $N_{(T,T)}^g = N_{(S,S)}^g+1$. \\ 
Since $a\cdotp 2^i + a\cdotp 2^j \equiv a\cdotp 2^{\tilde m}\pmod p$ if and only if $2^i + 2^j \equiv 2^{\tilde m}\pmod p$, the map that takes \begin{eqnarray*}S\times_{2^{\tilde m}}S \longrightarrow T\times_{a\cdotp 2^{\tilde m}}T, & (2^i,2^j) \longrightarrow (a\cdotp 2^i(\text{mod } p),a\cdotp 2^j(\text{mod } p))\end{eqnarray*} 
 is one to one and onto. Thus $N_{(T,T)}^{a\cdotp 2^{\tilde m}} = N_{(S,S)}^{2^{\tilde m}}$. But $N_{(S,S)}^g = 2r-1$ for all $g\in S$. Thus for all $h\in T$, we have $ N_{(T,T)}^h = 2r-1$.
By Theorem \ref{mainthmqs2}, Conditions \ref{condqs3} and \ref{condqs4} hold if and only if we have $\frac{p+1-6}{4} = 2r-1$ that is $p=8r+1$ or equivalently $p\equiv 1(\text{mod } 8)$. Also $\mu =0$ implies that $k=\frac{p+1}{2}$.\end{proof}
$ $

Lets look at the example when $p=17$.
\begin{eg}Let $G = (\bb Z_{17},+)$. Then $ \langle 2 \rangle$ is a subgroup of index $2$ in $(\bb Z_{17}, \cdot)$. By Theorem \ref{mainthmqs4}, if we take $S = \langle 2 \rangle$ that is $S=\{2,4,8,16, 15,13,9, 1\}$, then $S$ will form a quasi-signature set for $(18,9)$-equiangular frame.
\end{eg}
$ $
The signature matrix obtained from the signature set $S$ in the above example has been shown in \cite{thesis}.
$ $
\begin{rmk}We know from number theoretic arguments that $2^{\frac{p-1}{2}}\equiv 1\pmod p$ if and only if $p\equiv\pm 1\pmod 8$. Thus, in the algorithm for generating equiangular frames of the type $(p+1,\dfrac{p+1}{2})$ from Theorem \ref{mainthmqs4}, we will take primes of the form $p=8m+1$, $m\in \bb N$ and then will check for which of the groups of the type $(\bb Z_p,\cdot)$, we have $\langle 2\rangle$ as a subgroup of index $2$. Note that $73 = 8\cdotp 9+ 1$ but $\langle 2\rangle$ is not a subgroup of index $2$ in $(\bb Z_{73},\cdot)$ and conversely $\langle 2\rangle$ is a subgroup of index $2$ in $(\bb Z_7,\cdot)$ but $7\not \equiv 1\pmod 8$.\end{rmk}

$ $

$ $

\noindent \fbox{\parbox{5.435in}{\begin{center}\underline{\textbf{Algorithm for generating $(2k,k)$-equiangular frames using Theorem \ref{mainthmqs4}}}\end{center}

\smallskip

\noindent Begin by taking a positive integer $m$.

\smallskip

\smallskip

\noindent \textbf{Step 1:} Check whether $8m+1$ is a prime, if so, call it $p$.

\smallskip

\smallskip

\noindent \textbf{Step 2:} For $p$ obtained in Step 1, evaluate $l= 2^i\pmod p$ for each $i \in \{1,\dots,\frac{p-3}{2}\}$.

\smallskip

\smallskip

\noindent \textbf{Step 3:} If $l\neq 1$ for all $i\in \{1,\dots, \frac{p-3}{2}\}$, then we get a $(p+1,\dfrac{p+1}{2})$-equiangular frame and the set $\{2^r: 1\leq r\leq \frac{p-1}{2}\}$ is a quasi-signature set for this frame.}}

$ $

$ $

\begin{table}[ht]
\caption{Equiangular Frames obtained using the above algorithm for $m<300$}
\centering
\small{\begin{tabular}{|c|c|c|c|}
\hline 
 $m$&$(n,k)$ & $m$&$(n,k)$ \\
\hline \hline 
 $0$ & $( 2, 1)$ & $122$ & $( 978, 489)$\\

 $2$ & $( 18, 9)$& $126$ & $( 1010, 505)$\\

 $5$ & $( 42, 21 )$ & $141$ & $( 1130, 565 )$\\

 $12$ & $( 98, 49)$ & $162$ & $( 1298, 649 )$\\

 $17 $& $( 138, 69 )$& $170$ & $( 1362, 681 )$\\

 $24$ & $( 194, 97 )$& $176$& $( 1410, 705 )$\\

 $39$ & $( 314, 157 )$& $186$ & $( 1490, 745 )$\\

 $50$ & $( 402, 201 )$& $212$ & $( 1698, 849)$\\

 $51 $& $( 410, 205 )$& $234 $& $( 1874, 937)$\\

 $56 $& $( 450, 225 )$& $249$ & $( 1994, 997 )$\\

 $65 $& $( 522, 261 )$& $260 $& $( 2082, 1041)$\\

 $71 $& $( 570, 285 )$& $267$ & $( 2138, 1069)$\\

 $95 $& $( 762, 381 )$& $269$ & $( 2154, 1077)$\\

 $96 $& $( 770, 385 )$& $270$ & $( 2162, 1081)$\\

 $101$ & $( 810, 405 )$& $287$ & $( 2298, 1149)$\\

 $107$ & $( 858, 429 )$&$297$&$( 2378, 1189)$ \\

 $116$ & $( 930, 465 ) $&  &\\

 \hline
\end{tabular}}
\label{results2}
\end{table}
\newpage
\section{Cube Roots of Unity and Signature Pairs of Sets}\label{sec7}
It was shown in \cite{cuberoots}, the nontrivial signature matrices whose off-diagonal entries
are cube roots of unity. Also  in \cite{cuberoots}, a  number of necessary and sufficient
conditions for such a signature matrix of an $(n, k)$-equiangular frame to exist are presented. In this section, we will be extending the results we got in the case of real equiangular frames to the case when $Q$ is a cube root signature matrix.

$ $

Let $\omega = \frac{-1}{2}+ i\frac{\sqrt 3}{2}$. Then the set $\{1, \omega, \omega^2\}$ is the set of cube roots of unity. Note also that $\omega^2 =\bar\omega$ and $1 + \omega + \omega^2 = 0$.
In \cite{cuberoots}, a matrix $Q$ is called a cube root Seidel matrix if it is self-adjoint,
has vanishing diagonal entries, and off-diagonal entries which are all cube roots
of unity. Moreover, if $Q$ has exactly two eigenvalues, then we say that it is the cube root
signature matrix of an equiangular tight frame.
\begin{defn}\label{defcube1}Let $G$ be a group with $|G| = n$ and $\lambda:G\longrightarrow GL(\bb F(G))$ be the left regular representation. Let $S, T\subset G\setminus\{e\}$ be disjoint such that $G\setminus\{e\} = S\cup T\cup V$ where $V = (S\cup T)^c\setminus\{e\}$. Form $Q = \sum_{g\in S}\lambda(g)+ \omega\sum_{g\in T}\lambda(g)+ \omega^2\sum_{g\in V}\lambda(g)$. We call $(S,T)$ a signature pair of sets for a cube root $(n,k)$-equiangular frame if $Q$ forms the cube root signature matrix of a $(n,k)$-equiangular frame.
\end{defn}
We will call frames arising from such signature pairs as $(n,k)$-cube root equiangular frames.

Similar to Theorem \ref{mainthm1} in Section \ref{sec2}, we have the following result in the case of $(n,k)$-cube root equiangular frame: 

\begin{thm}\label{mainthmcube1}Let $G$ be a group with $|G| = n$. Let $S, T \subset G\setminus\{e\}$-disjoint such that $G\setminus\{e\} = S\cup T\cup V$ where $V$ is as in Definition \ref{defcube1}. Then there exists a $k$ such that $(S,T)$ will form a signature pair of sets for an $(n,k)$-cube root equiangular frame if and only if the following hold:
\begin{enumerate}
\item $S= S^{-1}$ and $T^{-1} = V$;
\item there exists an integer $\mu$ such that 
\begin{enumerate}
\item[(a)] for all $g\in S$, we have \begin{equation}\label{cube1}\begin{split}& N_{(S,S)}^g + \omega^2 N_{(T,T)}^g + \omega N_{(V,V)}^g +\omega(N_{(S,T)}^g+N_{(T,S)}^g)  +\omega^2( N_{(S,V)}^g+N_{(V,S)}^g)+\\ & N_{(T,V)}^g+N_{(V,T)}^g = \mu;\end{split}\end{equation}
\item[(b)] for all $h\in T$, we have \begin{equation}\label{cube2}\begin{split}& N_{(S,S)}^h + \omega^2 N_{(T,T)}^h + \omega N_{(V,V)}^h +\omega (N_{(S,T)}^h+ N_{(T,S)}^h)  +\omega^2 (N_{(S,V)}^g+ N_{(V,S)}^g)+ \\ & N_{(T,V)}^h+N_{(V,T)}^h = \omega \mu;\end{split}\end{equation}
\item[(c)] for all $\tilde h \in V$, we have \begin{equation}\label{cube3}\begin{split}& N_{(S,S)}^{\tilde h} + \omega^2 N_{(T,T)}^{\tilde h} + \omega N_{(V,V)}^{\tilde h} +\omega (N_{(S,T)}^{\tilde h}+ N_{(T,S)}^{\tilde h}) +\omega^2( N_{(S,V)}^{\tilde h}+N_{(V,S)}^{\tilde h})+\\ & N_{(T,V)}^{\tilde h}+N_{(V,T)}^{\tilde h} = \omega^2 \mu. \end{split}\end{equation}
\end{enumerate} 
\end{enumerate}
In this case $k$ and $\mu$ are related by the equations given in \eqref{relation}
\end{thm}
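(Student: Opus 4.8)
The plan is to mimic the proof of Theorem~\ref{mainthm1} exactly, translating the two matrix conditions of Theorem~\ref{sigm} into group-theoretic counting conditions, but now keeping careful track of the three cube-root coefficients $1,\omega,\omega^2$. First I would form $Q = \sum_{g\in S}\lambda(g)+ \omega\sum_{g\in T}\lambda(g)+ \omega^2\sum_{g\in V}\lambda(g)$ and note, as in Definition~\ref{defcube1}, that $(S,T)$ is a signature pair for an $(n,k)$-cube root equiangular frame precisely when $Q$ is a cube root signature matrix, which by Theorem~\ref{sigm} happens if and only if $Q=Q^*$ and $Q^2=(n-1)I+\mu Q$ for some real (here necessarily integer) $\mu$.

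The self-adjointness step comes first. Using $\lambda(g)^*=\lambda(g^{-1})$ and $\bar\omega=\omega^2$, I would compute
\begin{equation*}
Q^* = \sum_{g\in S}\lambda(g^{-1})+ \omega^2\sum_{g\in T}\lambda(g^{-1})+ \omega\sum_{g\in V}\lambda(g^{-1}).
\end{equation*}
Matching coefficients of each $\lambda$ with those in $Q$ forces the $1$-coefficient set to be inverse-closed, and swaps the $\omega$- and $\omega^2$-coefficient sets under inversion; that is exactly condition~(1), $S=S^{-1}$ and $T^{-1}=V$. This is the clean analogue of the $S=S^{-1}$, $T=T^{-1}$ condition in Theorem~\ref{mainthm1}, and the only subtlety is remembering that conjugating $\omega$ turns $T$ into $V$ rather than back into $T$.

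Next I would expand $Q^2$ by the distributive law. Each product $\lambda(a)\lambda(b)=\lambda(ab)$ carries the product of the two coefficients, so a pair from $S\times T$ contributes $1\cdot\omega=\omega$, a pair from $T\times T$ contributes $\omega^2$, a pair from $V\times V$ contributes $\omega$, a pair from $S\times V$ contributes $\omega^2$, a pair from $T\times V$ contributes $1$, and so on. Collecting the coefficient of $\lambda(g)$ for a fixed $g$ gives exactly the left-hand sides of \eqref{cube1}, \eqref{cube2}, \eqref{cube3}, written in terms of the counting functions $N_{(A,B)}^g$. The diagonal contribution $g=e$ accounts for the $(n-1)I$ term (each element pairs with its inverse). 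On the right-hand side, $(n-1)I+\mu Q$ contributes $\mu$ to the coefficient of $\lambda(g)$ for $g\in S$, $\omega\mu$ for $g\in T$, and $\omega^2\mu$ for $\tilde h\in V$, which produces the three cases. A standard counting argument then shows the matrix identity holds if and only if these coefficient equations hold for every $g\in S$, $h\in T$, $\tilde h\in V$.

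The main obstacle I anticipate is bookkeeping rather than conceptual: correctly assigning each of the nine ordered pair-types its $\omega$-power and verifying that the three target values $\mu,\omega\mu,\omega^2\mu$ are consistent with the inversion symmetry from condition~(1). In particular, one should check that the three conditions are not independent---applying inversion and using $S=S^{-1}$, $T^{-1}=V$ relates the equation for $g\in S$ to those for $T$ and $V$---so that $\mu$ is forced to be real and the cases cohere. Once the coefficient identity is established, the relation between $k$ and $\mu$ follows immediately from Theorem~\ref{sigm} and the Equations in \eqref{relation}, exactly as in the real case.
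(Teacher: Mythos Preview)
Your proposal is correct and follows essentially the same approach as the paper: form $Q$, invoke Theorem~\ref{sigm}, translate $Q=Q^*$ into $S=S^{-1}$, $T^{-1}=V$, and translate $Q^2=(n-1)I+\mu Q$ into the three coefficient equations by expanding the product and collecting the $\omega$-weighted counts $N_{(A,B)}^g$. The paper's proof is actually terser than yours---it simply writes ``by counting arguments as before in Theorem~\ref{mainthm1}'' where you spell out the nine pair-types and the diagonal contribution---so your version would serve as a fuller exposition of the same argument.
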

\begin{proof}
Form $Q = \sum_{g\in S}\lambda(g)+ \omega\sum_{g\in T}\lambda(g)+ \omega^2\sum_{g\in V}\lambda(g)$. Then by Definition \ref{defcube1}, $(S,T)$ will form a signature set for an $(n,k)$-cube root equiangular frame if and only if $Q$ forms a signature matrix for an $(n,k)$-cube root equiangular frame. From Theorem \ref{sigm} we know that an $n\times n$ matrix $Q$ forms a signature matrix for an $(n,k)$-equiangular frame if and only if it satisfies the following two conditions:
\begin{enumerate}
\item[(a)]$Q$ is self adjoint that is $Q = Q^*$; and 
\item[(b)] $Q^2 = (n-1)I + \mu Q$ for some real number $\mu$.
\end{enumerate}
The condition $Q = Q^*$ is equivalent to \begin{align*}\sum_{g\in S}\lambda(g)+ \omega\sum_{h\in T}\lambda(h)+ \omega^2\sum_{\tilde h\in V}\lambda(\tilde h)& = (\sum_{g\in S}\lambda(g)+ \omega\sum_{g\in T}\lambda(g)+ \omega^2\sum_{g\in V}\lambda(g))^*\\& =\sum_{g\in S}\lambda(g^{-1})+ \omega^2\sum_{h\in T}\lambda(h^{-1})+ \omega\sum_{\tilde h\in V}\lambda(\tilde h^{-1}).\end{align*}
Thus $g\in S$ implies $g^{-1}\in S$ and $h\in T$ implies $h^{-1} \in V$. By using counting arguments as before in Theorem \ref{mainthm1}, the second condition $Q^2 = (n-1)I+ \mu Q$  for some real number $\mu$, is equivalent to \begin{enumerate}
 \item[(a)] for all $g\in S$, we have \begin{equation*}\begin{split}& N_{(S,S)}^g + \omega^2 N_{(T,T)}^g + \omega N_{(V,V)}^g +\omega(N_{(S,T)}^g+N_{(T,S)}^g) +\omega^2( N_{(S,V)}^g+N_{(V,S)}^g)+\\ & N_{(T,V)}^g+N_{(V,T)}^g = \mu; \end{split}
\end{equation*}
\item[(b)] for all $h\in T$, we have \begin{equation*}\begin{split}& N_{(S,S)}^h + \omega^2 N_{(T,T)}^h + \omega N_{(V,V)}^h +\omega (N_{(S,T)}^h+ N_{(T,S)}^h)  +\omega^2 (N_{(S,V)}^g+ N_{(V,S)}^g)+ \\ & N_{(T,V)}^h+N_{(V,T)}^h = \omega \mu;\end{split}\end{equation*}
\item[(c)] for all $\tilde h \in V$, we have \begin{equation*} \begin{split}& N_{(S,S)}^{\tilde h} + \omega^2 N_{(T,T)}^{\tilde h} + \omega N_{(V,V)}^{\tilde h} +\omega (N_{(S,T)}^{\tilde h}+ N_{(T,S)}^{\tilde h}) +\omega^2( N_{(S,V)}^{\tilde h}+N_{(V,S)}^{\tilde h})+\\ & N_{(T,V)}^{\tilde h}+N_{(V,T)}^{\tilde h} = \omega^2 \mu.\end{split}\end{equation*}
\end{enumerate}

\end{proof}

\begin{thm}\label{mainthmcube2}Let $G$ be a group with $|G| = n$ and $S, T \subset G\setminus\{e\}$ -disjoint such that $G\setminus\{e\} = S\cup T\cup V$ where $V = (S\cup T)^c\setminus\{e\}$. Also let $S = S^{-1}$ and $T^{-1}=V$. If $(S,T)$ forms a signature pair of sets for an $(n,k(\mu))$-cube root equiangular frame, then the following hold.
\begin{enumerate}
\item[(a)] For all $g\in S$,\begin{equation}\label{cube4}N_{(S,T)}^g  + N_{(T,S)}^g + N_{(T,T)}^g = \frac{n-2-\mu}{3}.\end{equation}
\item[(b)]For all $h\in T$, \begin{equation}\label{cube5} N_{(V,V)}^h + N_{(S,T)}^h  + N_{(S,V)}^h = \dfrac{\mu+n-1}{3}.\end{equation}
\item[(c)]For all $\tilde h\in V$, \begin{equation}\label{cube6}N_{(T,T)}^{\tilde h} + N_{(S,T)}^{\tilde h}+ N_{(S,V)}^{\tilde h} =\dfrac{\mu + n-1}{3}.\end{equation}
\end{enumerate}
\end{thm}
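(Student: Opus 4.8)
The plan is to run the same strategy used for Theorem \ref{mainthm2}, but inside the group algebra $\bb C[G]$ and with the three-term splitting forced by $1+\omega+\omega^2=0$. Since $(S,T)$ is a signature pair, Theorem \ref{sigm} gives $Q^2=(n-1)I+\mu Q$, which by Theorem \ref{mainthmcube1} is equivalent to the identities \eqref{cube1}, \eqref{cube2}, \eqref{cube3}. For a fixed $x\in G\setminus\{e\}$ I would write the left-hand side of these as $\Theta_x+\omega\,\Omega_x+\omega^2\Psi_x$, where $\Theta_x=N_{(S,S)}^x+N_{(T,V)}^x+N_{(V,T)}^x$, $\Omega_x=N_{(V,V)}^x+N_{(S,T)}^x+N_{(T,S)}^x$ and $\Psi_x=N_{(T,T)}^x+N_{(S,V)}^x+N_{(V,S)}^x$. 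Substituting $\omega^2=-1-\omega$ and using that $\{1,\omega\}$ is a basis of $\bb C$ over $\bb R$ (so the real and the $\omega$-parts match separately), each of \eqref{cube1}--\eqref{cube3} splits into two integer equations: for $g\in S$ they read $\Omega_g=\Psi_g$ and $\Theta_g-\Psi_g=\mu$; for $\tilde h\in V$ they read $\Theta_{\tilde h}=\Omega_{\tilde h}$ and $\Omega_{\tilde h}-\Psi_{\tilde h}=-\mu$; and analogously for $h\in T$.

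Next I would feed in the global counting identity: for every $x\neq e$ the nine quantities $N_{(A,B)}^x$ with $A,B\in\{S,T,V\}$ sum to $n-2$, i.e. $\Theta_x+\Omega_x+\Psi_x=n-2$. Combined with the two equations above this pins down all three coefficient-group sums; for instance $\Omega_g=\Psi_g=\frac{n-2-\mu}{3}$ when $g\in S$, while for $\tilde h\in V$ one gets $\Theta_{\tilde h}=\Omega_{\tilde h}=\frac{n-2-\mu}{3}$ and $\Psi_{\tilde h}=\frac{n-2+2\mu}{3}$. This already resembles the target equalities but is not yet them, because the triples in \eqref{cube4}--\eqref{cube6} mix the $\omega$- and $\omega^2$-groups rather than being these groups.

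The missing ingredient, and the step I expect to be the crux, is the family of marginal cardinality identities obtained by reading off the row and column sums of the array $(N_{(A,B)}^x)$: for fixed $x$ and fixed $A$ one has $\sum_{B}N_{(A,B)}^x=\#\{a\in A:a\neq x\}$, and similarly for columns. Using $T^{-1}=V$ (hence $|V|=|T|$) and $S=S^{-1}$, these evaluate to $(|S|-1,|T|,|T|)$ when $x\in S$ and to $(|S|,|T|,|T|-1)$ when $x\in V$. For $g\in S$ the $T$- and $V$-row sums agree and the $T$- and $V$-column sums agree, and combining these two equalities with $\Omega_g=\Psi_g$ forces $N_{(T,T)}^g=N_{(V,V)}^g$; the identity \eqref{cube4} then drops out of $\Omega_g=\frac{n-2-\mu}{3}$. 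For $\tilde h\in V$, the same bookkeeping together with $\Theta_{\tilde h}=\Omega_{\tilde h}$ yields $N_{(T,T)}^{\tilde h}-N_{(S,S)}^{\tilde h}=\frac{\mu-2(|S|-|T|)}{3}$; substituting the cardinality relation $|S|+2|T|=n-1$ and the row-$S$ sum $N_{(S,T)}^{\tilde h}+N_{(S,V)}^{\tilde h}=|S|-N_{(S,S)}^{\tilde h}$ converts this into \eqref{cube6}. Finally I would deduce \eqref{cube5} from \eqref{cube6} using the inverse symmetry $N_{(A,B)}^h=N_{(B^{-1},A^{-1})}^{h^{-1}}$ (with $h^{-1}\in V$, since $T^{-1}=V$), noting that $N_{(V,S)}^{\tilde h}+N_{(T,S)}^{\tilde h}$ and $N_{(S,T)}^{\tilde h}+N_{(S,V)}^{\tilde h}$ both equal $|S|-N_{(S,S)}^{\tilde h}$.

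The genuine obstacle is thus not the cube-root arithmetic but recognizing that \eqref{cube1}--\eqref{cube3} alone are too weak: they constrain only the three symmetric coefficient-group sums $\Theta_x,\Omega_x,\Psi_x$, and one must supply the marginal counts to separate, for example, $N_{(T,T)}$ from $N_{(V,V)}$. Keeping track of the asymmetry introduced by $T^{-1}=V$—inverting fixes $S$ setwise but interchanges $T$ and $V$—is precisely what accounts for the two distinct right-hand sides, $\frac{n-2-\mu}{3}$ in \eqref{cube4} versus $\frac{\mu+n-1}{3}$ in \eqref{cube5} and \eqref{cube6}, the extra $+1$ in the latter entering through $|S|+2|T|=n-1$.
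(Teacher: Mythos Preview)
Your proposal is correct and follows essentially the same strategy as the paper: start from the complex identities \eqref{cube1}--\eqref{cube3} of Theorem~\ref{mainthmcube1}, separate them into two real equations each, and then combine with row/column marginal counts coming from $|S|$, $|T|$, $|V|$. The paper carries this out by fixing $|S|=l$ and a single unknown such as $N_{(T,T)}^g$ or $N_{(S,S)}^h$, then grinding through substitutions until the target identity appears; your $\Theta_x,\Omega_x,\Psi_x$ notation packages the same computation more transparently, and your use of the inverse symmetry $N_{(A,B)}^h=N_{(B^{-1},A^{-1})}^{h^{-1}}$ to pass between \eqref{cube5} and \eqref{cube6} is exactly the ``by symmetry'' step the paper invokes at the end.
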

\begin{proof}From Theorem \ref{mainthmcube1}, since $\mu$ is real, using Equation \eqref{cube1} we have, 
\begin{equation}\label{eq1} N_{(T,T)}^g+  N_{(S,V)}^g + N_{(V,S)}^g  =  N_{(V,V)}^g +N_{(S,T)}^g+N_{(T,S)}^g. .\end{equation}
Assume $|S| = l$ and for $g\in S$, $ N_{(T,T)}^g =m$. Then $|T| =|V| = \frac{n-l-1}{2}$ and $N_{(T,S)}^g+ N_{(T,V)}^g = \frac{n-1-l}{2}-m$.
Thus by \eqref{eq1}, we have \[m + N_{(S,V)}^g + N_{(V,S)}^g = N_{(V,V)}^g+\frac{n-1-l}{2}-m -  N_{(T,V)}^g+ \frac{n-1-l}{2}-m-  N_{(V,T)}^g \]
that is \[3m + N_{(S,V)}^g + N_{(T,V)}^g + N_{(V,S)}^g + N_{(V,T)}^g = N_{(V,V)}^g+ n-1-l .\]
But we also have \[N_{(S,V)}^g + N_{(T,V)}^g + N_{(V,V)}^g = \frac{n-1-l}{2}\] and 
\[N_{(V,S)}^g + N_{(V,T)}^g + N_{(V,V)}^g = \frac{n-1-l}{2}. \] Thus,  \[3m + \frac{n-1-l}{2} - N_{(V,V)}^g + \frac{n-1-l}{2} - N_{(V,V)}^g = N_{(V,V)}^g+ n-1-l. \]
Hence we get $ N_{(V,V)}^g =m$ and for all $g\in  S$, \begin{equation}\label{c1} N_{(T,T)}^g = N_{(V,V)}^g.\end{equation}
 Again using \eqref{cube1}, we have $ N_{(S,V)}^g + N_{(V,S)}^g = N_{(S,T)}^g+N_{(T,S)}^g$. But for all $g  \in S$, $ N_{(S,V)}^g + N_{(S,T)}^g = N_{(V,S)}^g +N_{(T,S)}^g$. Thus for all $g\in S$, we have \begin{eqnarray}\label{c2}N_{(S,V)}^g = N_{(T,S)}^g & and & N_{(V,S)}^g = N_{(S,T)}^g.\end{eqnarray}
Using Equations \eqref{c1} and \eqref{c2}, from \eqref{cube1} we get, 
\begin{align*}\mu & = N_{(S,S)}^g - N_{(T,T)}^g -N_{(S,T)}^g  - N_{(T,S)}^g+ N_{(T,V)}^g+ N_{(V,T)}^g \\
& =  N_{(S,S)}^g - N_{(T,T)}^g -N_{(S,T)}^g  - N_{(T,S)}^g + (|T| - N_{(T,T)}^g- N_{(T,S)}^g)+\\ &\quad( |T| - N_{(T,T)}^g-N_{(S,T)}^g)\\
& = N_{(S,S)}^g - 3N_{(T,T)}^g +2|T| -2(N_{(S,T)}^g  + N_{(T,S)}^g)\\
& = N_{(S,S)}^g - 3N_{(T,T)}^g +2|T| -2(N_{(S,T)}^g  + N_{(S,V)}^g)\\
& = N_{(S,S)}^g- 3N_{(T,T)}^g +2|T| -2(l-1-N_{(S,S)}^g)\\
& = 3N_{(S,S)}^g -3N_{(T,T)}^g +n-1-l -2l+2\\
&= 3N_{(S,S)}^g-3l-3N_{(T,T)}^g +n+1.\end{align*}
Thus, $n+1-\mu = 3(l-N_{(S,S)}^g+N_{(T,T)}^g)$ that is $\frac{n+1-\mu}{3} = (l-N_{(S,S)}^g + N_{(T,T)}^g)$. But we know that $N_{(S,T)}^g  + N_{(S,V)}^g = l-1-N_{(S,S)}^g$. Thus, 
\[N_{(S,T)}^g  + N_{(S,V)}^g + N_{(T,T)}^g = \frac{n-2-\mu}{3}\] or by \eqref{c2}
\[N_{(S,T)}^g  + N_{(T,S)}^g + N_{(T,T)}^g = \frac{n-2-\mu}{3}.\] 
To simplify Condition \eqref{cube2}, assume that for $h\in T$, $N_{(S,S)}^h =m$. Then as before, since $\mu$ is a real number, we must have from Condition \eqref{cube2},
\begin{equation}\label{eq2}N_{(S,S)}^h  + N_{(T,V)}^h + N_{(V,T)}^h = N_{(T,T)}^h  + N_{(S,V)}^h + N_{(V,S)}^h .\end{equation}
Also we have \begin{eqnarray}\label{c4}N_{(S,T)}^h  + N_{(S,V)}^h = l-m & and & N_{(T,S)}^h  + N_{(V,S)}^h = l-m\end{eqnarray}
 Thus \eqref{eq2} changes to $m + N_{(T,V)}^h + N_{(V,T)}^h =  N_{(T,T)}^h +l-m- N_{(S,T)}^h+ l-m- N_{(T,S)}^h$ that is $m + N_{(T,V)}^h + N_{(T,S)}^h+ N_{(V,T)}^h + N_{(S,T)}^h=  N_{(T,T)}^h+2l-2m  $. 
But $N_{(T,V)}^h + N_{(T,S)}^h = |T|-1- N_{(T,T)}^h$ and $N_{(V,T)}^h+ N_{(S,T)}^h= |T|-1- N_{(T,T)}^h$. Thus we have $m + 2(|T|-1- N_{(T,T)}^h)=  N_{(T,T)}^h +2l-2m $ that is $N_{(T,T)}^h = m-l-1+\frac{n}{3}$. Again using \eqref{eq2} we have \[m  + N_{(T,V)}^h + N_{(V,T)}^h = m-l-1+\frac{n}{3} + N_{(S,V)}^h + N_{(V,S)}^h. \]
Thus,\begin{equation}\label{c3} N_{(T,V)}^h -  N_{(S,V)}^h + N_{(V,T)}^h -N_{(V,S)}^h = \frac{n}{3}-l-1.\end{equation}
 
But $N_{(T,V)}^h +  N_{(S,V)}^h = |V|-N_{(V,V)}^h$ and $N_{(V,T)}^h +N_{(V,S)}^h =  |V|-N_{(V,V)}^h$. Thus we have, $ |V|-N_{(V,V)}^h - 2 N_{(S,V)}^h + |V|-N_{(V,V)}^h  -2N_{(V,S)}^h = \frac{n}{3}-l-1$ that is  $ 2|V|-2N_{(V,V)}^h - 2 N_{(S,V)}^h  -2N_{(V,S)}^h = \frac{n}{3}-l-1$. If $N_{(V,V)}^h =\tilde m$, then we have \[2(N_{(S,V)}^h  +N_{(V,S)}^h) = n-l-1-2\tilde m - \frac{n}{3}+l+1. \]
That is \[N_{(S,V)}^h  +N_{(V,S)}^h = \frac{ n-2\tilde m - \frac{n}{3}}{2} =\frac {n-3\tilde m}{3}.\]
Thus using \eqref{c3} we have, \[N_{(T,V)}^h  +N_{(V,T)}^h = \frac{n}{3}-l-1+ \frac{ n-2\tilde m - \frac{n}{3}}{2} = \frac{2n-3l -3-3\tilde m}{3}\]
and using \eqref{c4}, we have \[N_{(T,S)}^h  +N_{(S,T)}^h = 2(l-m) - ( \frac {n-3\tilde m}{3}) = \frac{6l-6m-n+3\tilde m}{3}.\]
Substituting the values obtained above in \eqref{cube2} we have,
\begin{align*}\omega \mu & = m+\omega^2(m-l-1+\frac{n}{3})+\omega\tilde m+\omega (\frac{6l-6m-n+3\tilde m}{3})+ \omega^2(\frac {n-3\tilde m}{3})+ \\
& \quad\frac{2n-3l -3-3\tilde m}{3}\\
& = \frac{3m +2n - 3l -3\tilde m - 3+ \omega^2(3m - 3l -3 +2n -3\tilde m)+\omega (6\tilde m + 6l -6m -n)}{3}\\
& = \frac{\omega(9\tilde m +9l -9m -3n+3)}{3}\\
\mu & = 3\tilde m +3l -3m -n+1.\end{align*} Thus \[\frac{\mu + n-1}{3} = \tilde m +l -m \] that is \[N_{(V,V)}^h + N_{(S,T)}^h+ N_{(S,V)}^h = \frac{\mu + n-1}{3}.\] By symmetry, Condition \eqref{cube3} reduces to for all $\tilde h\in V$, we have \[ N_{(T,T)}^{\tilde h} + N_{(S,T)}^{\tilde h}+ N_{(S,V)}^{\tilde h} =\frac{\mu + n-1}{3}.\]\end{proof}
$ $
We have the following corollary which was also proved in \cite{cuberoots} as Proposition 3.3. Here we will be proving it in a totally different way using signature pair of sets in groups.
\begin{cor}Let $G$ be a group of order $n$. If there exist a signature pair $(S,T)$ associated with an $(n,k)$-cube root equiangular frame, then the following hold.
\begin{enumerate}
\item[(a)]$n \equiv 0\pmod3$.
\item[(b)] $\mu$ is an integer and $\mu \equiv 1\pmod 3$.
\item[(c)] The integer $4(n-1)+\mu^2$ is a perfect square and in addition $4(n-1)+\mu^2\equiv 0\pmod 9$.
\end{enumerate} 
\end{cor}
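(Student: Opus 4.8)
The plan is to derive all three conclusions from the two simplified counting identities \eqref{cube4}--\eqref{cube6} of Theorem \ref{mainthmcube2} together with the eigenvalue relations \eqref{relation}, so that no new structural input about $Q$ is needed. First I would prove (a) and (b) simultaneously. The left-hand sides of \eqref{cube4} and \eqref{cube5} are sums of the nonnegative integers $N^{g}_{(\cdot,\cdot)}$, hence are themselves nonnegative integers; therefore the right-hand sides $\tfrac{n-2-\mu}{3}$ and $\tfrac{\mu+n-1}{3}$ are nonnegative integers. In particular $\mu$ is an integer (this also follows from the integer $\mu$ supplied by Theorem \ref{mainthmcube1}), and
\[ n-2-\mu \equiv 0 \pmod 3, \qquad \mu + n - 1 \equiv 0 \pmod 3. \]
Adding these congruences gives $2n-3 \equiv 0 \pmod 3$, so $2n \equiv 0 \pmod 3$, and since $2$ is invertible modulo $3$ we get $n \equiv 0 \pmod 3$, which is (a). Substituting $n \equiv 0$ into $n-2-\mu \equiv 0 \pmod 3$ yields $\mu \equiv -2 \equiv 1 \pmod 3$, which is (b).

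For (c) I would first show that $4(n-1)+\mu^2$ is a perfect square. By the second relation in \eqref{relation}, $k = \tfrac{n}{2} - \tfrac{\mu n}{2\sqrt{4(n-1)+\mu^2}}$, which rearranges to
\[ n-2k = \frac{\mu n}{\sqrt{4(n-1)+\mu^2}}. \]
Since $\mu \equiv 1 \pmod 3$ we have $\mu \neq 0$, and as $n>0$ this forces $n-2k \neq 0$ (otherwise $\mu n = 0$). Solving gives $\sqrt{4(n-1)+\mu^2} = \mu n /(n-2k)$, a quotient of integers, hence a rational number. A rational number whose square is the integer $4(n-1)+\mu^2$ must itself be an integer (its reduced denominator would have to divide the numerator), so $4(n-1)+\mu^2$ is a perfect square.

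It then remains to sharpen ``divisible by $3$'' to ``divisible by $9$.'' Reducing modulo $3$ and using (a) and (b), together with $4 \equiv 1$ and $\mu^2 \equiv 1 \pmod 3$, I would compute
\[ 4(n-1)+\mu^2 \equiv (n-1) + 1 = n \equiv 0 \pmod 3. \]
Thus the prime $3$ divides the perfect square $4(n-1)+\mu^2$, so $3$ divides its integer square root, whence $9 \mid 4(n-1)+\mu^2$, completing (c). The hard part will be the perfect-square step: the key observation is that the integrality of the frame dimension $k$ (equivalently of $n-2k$), combined with $\mu \neq 0$, is precisely what forces $\sqrt{4(n-1)+\mu^2}$ to be rational and therefore integral; once that is in hand, the passage from modulo $3$ to modulo $9$ is an immediate consequence of primality, and the congruences for (a) and (b) are routine.
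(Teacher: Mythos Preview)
Your proof is correct and follows essentially the same route as the paper: both derive (a) and (b) from the integrality of the right-hand sides of \eqref{cube4} and \eqref{cube5}, and both obtain (c) by combining the formula for $k$ in \eqref{relation} with the congruences from (a) and (b). Your treatment of the perfect-square step is in fact more careful than the paper's, which simply asserts from the formula for $k$ that $4(n-1)+\mu^2$ ``should be a perfect square''; you supply the missing argument that $\mu\neq 0$ forces $n-2k\neq 0$, so $\sqrt{4(n-1)+\mu^2}=\mu n/(n-2k)$ is rational and hence integral. For the divisibility by $9$ the paper instead writes $n=3m$, $\mu=3\tilde m+1$ and expands $4(3m-1)+(3\tilde m+1)^2=3(3\tilde m^2+2\tilde m+4m-1)$, but this is equivalent to your modular computation.
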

\begin{proof}Let $G$ be a group of order $n$ and let there exist a signature pair $(S,T)$ associated with an $(n,k)$-cube root equiangular frame, then by Theorem \ref{mainthmcube2}, Equations \eqref{cube4} and \eqref{cube5} hold. Adding Equations \eqref{cube4} and \eqref{cube5}, we get $n \equiv 0\pmod3$. Since $n$ is an integer and from Equation\eqref{cube4},  $\frac{n-2-\mu}{3}$ is an integer, thus $\mu$ is also an integer. Let $n=3m$ for some $m$ in $\bb N$. From Equation \eqref{cube6}, we know that $\frac{\mu+n-1}{3}$ is an integer, say $l$. Thus we have, $\mu = 3(l-m)+1$ that is $\mu = 1\pmod 3$.\\
For the third part, using the relation given in \eqref{relation} between $k$ and $\mu$, we have \[k=\frac{3m}{2}(1-\frac{3\tilde{m}+1}{\sqrt{4(3m-1)+(3\tilde m+1)^2}})\] where $n=3m$ for $m\in \bb N$ and $\mu =3\tilde m +1$ for $\tilde m \in \bb Z$.
Thus $4(3m-1)+(3\tilde m+1)^2$ should be perfect square. But\[4(3m-1)+(3\tilde m+1)^2 = 3(3\tilde m^2+2\tilde m+4m-1)\]
Thus, $4(3m-1)+(3\tilde m+1)^2 \equiv 0\pmod3$. But since $4(3m-1)+(3\tilde m+1)^2$ is a perfect square, we have $4(3m-1)+(3\tilde m+1)^2\equiv 0\pmod9$.
\end{proof}

\begin{eg}
 Let $G= (\bb Z_3,+)$. If we take $S = \{1,2\}$ and $T=\emptyset$, then for $g\in S$, $N_{(S,S)}^g =1$. Since it is the only non zero value, using Equation \eqref{cube1}, we have $\mu =1$. The signature matrix is $Q= \left(\begin{smallmatrix} 0 & 1& 1\\1 & 0 & 1\\1& 1& 0 \end{smallmatrix}\right)$ which gives rise to the trivial $(3,1)$-equiangular frame.\\
 If we take $S={\emptyset}$, $T= \{1\}$ and $V=\{2\}$, then for $h\in T$, $N_{(V,V)}^h$ is the only non zero value and equals $1$. Using Equation \eqref{cube2}, we have $\mu = 1$. In this case we get the following signature matrix $Q= \left(\begin{smallmatrix} 0&\omega&\omega^2\\
\omega^2 & 0 & \omega\\
\omega&\omega^2&0 
\end{smallmatrix}\right)$ which is again a signature matrix for the trivial $(3,1)$-equiangular frame.\end{eg}

 Next we would like to examine the case of non-trivial cube root equiangular frames arising from a signature pair of sets. In the process, we will look at some specific values of $n$ and $\mu$ and use the theory we have so far to investigate the possibility of the existence of non-trivial cube root equiangular frames arising from a signature pair of sets.
\begin{lem}\label{lemsp}
 Let $G$ be a finite abelian group of order $m$ where $m\in \bb N$ is odd. Then for every element $ e\neq g\in G$, there exists a unique $h\neq e$ such that $g=h^2$.
\end{lem}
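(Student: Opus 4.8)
The plan is to establish that the squaring map is a bijection on $G$, since both the existence and the uniqueness assertions then follow at once. First I would observe that, because $G$ is abelian, the map $\phi:G\longrightarrow G$ defined by $\phi(h)=h^2$ is a group homomorphism, as $(h_1h_2)^2=h_1^2h_2^2$. The whole lemma amounts to the claim that $\phi$ is a bijection that fixes only the identity, and the oddness of $m$ is exactly the hypothesis that delivers this.

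For existence I would exhibit a square root explicitly rather than argue abstractly. Since $m$ is odd, $\frac{m+1}{2}$ is a positive integer, so I can set $h=g^{(m+1)/2}$. By Lagrange's theorem the order of $g$ divides $|G|=m$, hence $g^m=e$, and therefore $h^2=g^{m+1}=g^m\cdot g=g$. Moreover $h\neq e$, for otherwise $g=h^2=e$, contradicting $g\neq e$.

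For uniqueness I would suppose $h_1^2=h_2^2=g$ and use commutativity to write $(h_1h_2^{-1})^2=h_1^2h_2^{-2}=e$, so that $h_1h_2^{-1}$ has order dividing $2$. Since $|G|=m$ is odd, Lagrange's theorem rules out any element of order $2$, forcing $h_1h_2^{-1}=e$, that is $h_1=h_2$. Equivalently, in the homomorphism language, $\ker\phi$ consists precisely of the elements of order dividing $2$, which is trivial exactly because $m$ is odd; thus $\phi$ is injective, and being a self-map of a finite set it is therefore bijective.

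I do not expect any serious obstacle here, as the statement is a standard consequence of the squaring map being invertible on a group of odd order. The only point that genuinely requires the hypothesis is the oddness of $m$: it is what both makes $(m+1)/2$ an integer in the existence step and forbids a nontrivial element of order $2$ in the uniqueness step, so I would be careful to flag that this is where the assumption is used.
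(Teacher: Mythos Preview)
Your proof is correct and follows essentially the same approach as the paper: an explicit square root via an odd exponent for existence, and the absence of order-$2$ elements for uniqueness. The only cosmetic difference is that you take $h=g^{(m+1)/2}$ using the group order, whereas the paper takes $h=g^{r+1}$ where $2r+1$ is the order of $g$; your added framing via the squaring homomorphism is a nice touch but not in the original.
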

\begin{proof}Firstly we will show that for $g\in G$ there exists an $h\in G$ such that $g=h^2$. The order of $G$ is odd implies that the order of $g$ is odd. Thus there exists an $r\in \bb N$ such that $g^{2r+1}=e$. Thus $g^{2r+1}\cdot g= e\cdot g$ that is $(g^{r+1})^2= g$. Taking $h=g^{r+1}\in G$, we have $h^2=g$. Now suppose that there exists $e\neq h,e\neq \tilde h \in G$ such that $g=h^2$ and $g=\tilde h^2$. Then $h^2\tilde h^{-2}= e$. Since $G$ is abelian, we have $(h\tilde h^{-1})^2=e$. This is only possible when $h\tilde h^{-1}= e$ that is $h= \tilde h$.
\end{proof}

\begin{prop}\label{nmu}
 Let $G$ be an abelian group of order $n$ where $n\equiv 3 \pmod 6$ and let $\mu \equiv 4\pmod 6$. Then there does not exist a signature pair of sets in $G$ associated with a $(n,k(\mu))$-cube root equiangular frame. 
\end{prop}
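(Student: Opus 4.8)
The plan is to argue by contradiction: assume a signature pair $(S,T)$ for an $(n,k(\mu))$-cube root equiangular frame exists, and play the parity information coming from Theorem \ref{mainthmcube2} against the arithmetic of the abelian group. Since $n\equiv 3\pmod 6$ the order $n$ is odd, so by Lemma \ref{lemsp} the squaring map is a bijection on $G$; write $\rho(g)$ for the unique element with $\rho(g)^2=g$, noting $\rho(e)=e$ and $\rho(g)\neq e$ for $g\neq e$. Two elementary facts will do the work. First, commutativity gives $N_{(A,B)}^g=N_{(B,A)}^g$ for all subsets $A,B$. Second, for any single subset $A$ the involution $(a_1,a_2)\mapsto(a_2,a_1)$ on the set $\{(a_1,a_2)\in A\times A:a_1a_2=g\}$ counted by $N_{(A,A)}^g$ has exactly one fixed point when $\rho(g)\in A$ (namely $(\rho(g),\rho(g))$) and none otherwise, so that $N_{(A,A)}^g\equiv[\rho(g)\in A]\pmod 2$.

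Next I would pin down the two residues that drive everything. Writing $n=6j+3$ and $\mu=6i+4$, the assumed existence of the pair makes the quantities in \eqref{cube4}--\eqref{cube6} integers, and one computes $\tfrac{n-2-\mu}{3}=2j-2i-1$ (odd) and $\tfrac{\mu+n-1}{3}=2i+2j+2$ (even). These are precisely the right-hand sides I will contradict.

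The heart of the argument splits on whether $S$ is empty. If $S\neq\emptyset$, pick any $g\in S$; using $N_{(S,T)}^g=N_{(T,S)}^g$, equation \eqref{cube4} becomes $2N_{(S,T)}^g+N_{(T,T)}^g=\tfrac{n-2-\mu}{3}$, whose right side is odd, forcing $N_{(T,T)}^g$ odd, i.e.\ $\rho(g)\in T$. Since $S=S^{-1}$ we also have $g^{-1}\in S$, and the same reasoning applied to $g^{-1}$ gives $\rho(g^{-1})\in T$; but $\rho(g^{-1})=\rho(g)^{-1}$ (as $(\rho(g)^{-1})^2=g^{-1}$), so $\rho(g)^{-1}\in T$. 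On the other hand $\rho(g)\in T$ forces $\rho(g)^{-1}\in T^{-1}=V$, whence $\rho(g)^{-1}\in T\cap V=\emptyset$, impossible because $\rho(g)^{-1}\neq e$. If instead $S=\emptyset$, then $G\setminus\{e\}=T\sqcup V$ with $V=T^{-1}$; because $3\mid n$ there is an element $g$ of order $3$, for which $\rho(g)=g^{-1}$ (indeed $(g^{-1})^2=g^{-2}=g$). Say $g\in T$, so $g^{-1}\in V$; then \eqref{cube5} collapses to $N_{(V,V)}^g=\tfrac{\mu+n-1}{3}$, an even number, while the parity fact gives $N_{(V,V)}^g\equiv[\rho(g)\in V]=[g^{-1}\in V]=1\pmod 2$, a contradiction (the case $g\in V$ is identical using \eqref{cube6}). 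In every case the assumption is untenable, which is the claim.

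I expect the routine ingredients to be the residue computation and the swap-involution parity count. The genuine obstacle is making the abelian hypothesis carry its weight: the conclusion $\rho(g)\in T$ is inert until it is paired with the facts that square roots behave well under inversion ($\rho(g^{-1})=\rho(g)^{-1}$) and that $T^{-1}=V$ is disjoint from $T$. The other trap to avoid is the degenerate case $S=\emptyset$, where the clean inversion argument is vacuous; there one must instead produce an element of order $3$, whose existence is exactly what $3\mid n$ (equivalently $n\equiv 3\pmod 6$) guarantees, and read off its parity from \eqref{cube5} or \eqref{cube6}.
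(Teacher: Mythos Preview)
Your argument in the case $S\neq\emptyset$ is exactly the paper's: use abelianness to rewrite \eqref{cube4} as $2N_{(S,T)}^g+N_{(T,T)}^g=\tfrac{n-2-\mu}{3}$, read off that $N_{(T,T)}^g$ is odd, deduce that the unique square root of $g$ lies in $T$, and then obtain a contradiction by passing to $g^{-1}$ and using $T^{-1}=V$. The only difference is cosmetic---you package the square-root step via the bijection $\rho$ and the swap-involution parity, whereas the paper quotes Lemma~\ref{lemsp} directly.

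Where you go further is the degenerate case $S=\emptyset$. The paper's proof simply picks $\tilde g\in S$ and never revisits this, so strictly speaking it leaves that case open (and $S=\emptyset$ is permitted by Definition~\ref{defcube1}, as the paper's own $\bb Z_3$ example shows). Your use of an order-$3$ element together with \eqref{cube5} to force $N_{(V,V)}^g$ to be simultaneously even and odd is a clean way to close this gap; it is a genuine, if minor, improvement over the paper's argument.
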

\begin{proof}
Suppose on the contrary that there exists a signature pair $(S,T)$ associated with $(n,k(\mu))$-cube root equiangular frame. Then using  \eqref{cube4} in Theorem  \ref{mainthmcube2}, we have $N_{(S,T)}^g  + N_{(T,S)}^g + N_{(T,T)}^g = \frac{n-2-\mu}{3}$. Since $n \equiv 3 \pmod 6$ and $\mu \equiv 4\pmod 6$, there exists $k\in\bb N,$ and $k^{'}\in \bb Z$ such that \[\frac{n-2-\mu}{3}= \frac{6k+4-2-(6k^{'}+4)}{3}= 2(k-k^{'})-1\] Also, $G$ abelian implies $N_{(S,T)}^g  = N_{(T,S)}^g$. Hence for all $g\in S$, we have \begin{equation}\label{odd}2N_{(S,T)}^g + N_{(T,T)}^g = 2(k-k^{'})-1.\end{equation} Since the right hand side of \eqref{odd} is odd, $N_{(T,T)}^g$ must be a positive odd integer. For $\tilde g\in S$, by Lemma \ref{lemsp} we know that there exists a unique $e\neq h\in G$ such that $h^2 = \tilde g$. Since $N_{(T,T)}^{\tilde g}$ is odd, $h$ must be in $T$. Thus $h^{-1} \in V$. Since $S$ is closed under inverses, we have $\tilde g^{-1}\in S$ and $\tilde g^{-1}=h^{-2} = (h^{-1})^2$. This contradicts that $N_{(T,T)}^{\tilde g^{-1}}$ is odd. Thus there does not exist a signature pair of sets in $G$ associated with the $(n,k)(\mu))$-cube root equiangular frame.
\end{proof}

\begin{rmk}
 It was shown in \cite{cuberoots} that there exists a $(9,6)$-cube root equiangular frame with $\mu =-2$. Also we know from Theorem 2.1/Exercise 5.13 in \cite{hungerford} that there are two distinct groups of order $9$ that is $\bb Z_9$ and $\bb Z_3\times \bb Z_3$, both abelian. Since $9\equiv 3\pmod 6$ and $-2\equiv 4\pmod 6 $, using Proposition \ref{nmu}, there does not exist a signature pair of sets in $\bb Z_9$ or $\bb Z_3\times \bb Z_3$ associated with the $(9,6)$-cube root equiangular frame. The next possible $(n,k)$ value of a cube root equiangular frame listed in \cite{cuberoots} is $(33,11)$ with $\mu = 4$. Once again $33\equiv 3\pmod 6$ and $4\equiv 4\pmod 6 $. Using Proposition 6.1 in \cite{hungerford}, we infer that every group of order $33$ is isomorphic to the cyclic group $\bb Z_{33}$. Thus using Proposition \ref{nmu}, we can conclude that there does not exist a signature pair of sets in $\bb Z_{33}$ associated with a $(33,11)$-cube root equiangular frame. 
\end{rmk}
This motivates us to explore the quasi-signature case. Similar to Section \ref{sec4}, where we had real signature matrices in the standard form, next we will look at the cube root signature matrices in the standard form. 
\section{Cube Roots of Unity and quasi-signature Pairs of Sets }
In this section we will  consider the signature matrices in the standard form with entries as cube roots of unity. We know from Lemma 2.2 in \cite{cuberoots} that if $Q$ is an $n\times n$ cube root Seidel matrix, then it is switching equivalent
to a cube root Seidel matrix of the form
\[Q^{'}=\left[\begin{array}{ccccc}
0&1&\hdots&\hdots& 1\\
1&0&*&\hdots&*\\
\vdots&*&\ddots&\ddots&\vdots\\
\vdots&\vdots&\ddots&\ddots&\vdots\\
1& * &\hdots &\hdots &0
\end{array}\right]\]
where the $*$'s are cube roots of unity. Moreover, $Q$ is the signature matrix of an
equiangular $(n, k)$-frame if and only if $Q^{'}$ is the signature matrix of an equiangular
$(n, k)$-frame.\\
Thus we have the following definition when cube root signature matrix is in the standard form:
\begin{defn}\label{defqsp}
Let $G$ be a group such that $|G|=m$. Let $S, T\subset G\setminus\{e\}$ be disjoint such that $G\setminus\{e\} = S\cup T\cup V$ where $V = (S\cup T)^c\setminus\{e\}$. For $\omega = \frac{-1}{2}+ i\frac{\sqrt 3}{2}$, form $ Q = \sum_{g\in S}\lambda(g)+\omega \sum_{h\in T}\lambda(g)+ \omega^2 \sum_{\tilde h\in V}\lambda(\tilde h)$ as in Section \ref{sec7}. Let \[\tilde Q = \left[\begin{array}{c|c}
0&C^t\\
\hline
C&Q\\
\end{array}\right]\]  where \[C = \left(\begin{array}{c}1\\\vdots \\1\end{array}\right)\in \bb C^m\]Then we call $(S,T)$ a quasi-signature pair for an $(n,k)$-cube root equiangular frame where $n=m+1$ if $\tilde Q$ forms a cube root signature matrix for an $(n, k)$-cube root equiangular frame. 
\end{defn}
Analogous to Theorem \ref{mainthmqs1} that gives us a necessary and sufficient condition for the existence of quasi-signature set, we have the following result about the quasi-signature pair of sets:
\begin{thm}\label{mainthmqsp1}Let $G$ be a group such that $|G|= m$. Let $S, T\subset G\setminus\{e\}$ be disjoint such that $G\setminus\{e\} = S\cup T\cup V$ where $V = (S\cup T)^c\setminus\{e\}$. Then there exists a $k$ such that $(S,T)$ will form a quasi-signature pair of sets for an $(n,k)$-cube root equiangular frame if and only if the following hold:
\begin{enumerate}
\item $S=S^{-1}$ and $T^{-1} = V$;
\item\begin{enumerate}
\item[(a)] for all $g\in S$, \begin{equation*}\begin{split}& N_{(S,S)}^g + \omega^2 N_{(T,T)}^g + \omega N_{(V,V)}^g +\omega (N_{(S,T)}^g + N_{(T,S)}^g) +\omega^2( N_{(S,V)}^g +N_{(V,S)}^g)+\\ & N_{(T,V)}^g+ N_{(V,T)}^g = \mu -1;\end{split}\end{equation*}
  \item[(b)] for all $h\in T$, \begin{equation*}\begin{split}&N_{(S,S)}^h + \omega^2 N_{(T,T)}^h + \omega N_{(V,V)}^h +\omega (N_{(S,T)}^h + N_{(T,S)}^h) +\omega^2( N_{(S,V)}^h +N_{(V,S)}^h)+ \\& N_{(T,V)}^h+ N_{(V,T)}^h  = \omega \mu-1;\end{split}\end{equation*}
 \item[(c)] for all $\tilde h\in V$, \begin{equation*}\begin{split}& N_{(S,S)}^{\tilde h} + \omega^2 N_{(T,T)}^{\tilde h} + \omega N_{(V,V)}^{\tilde h}+ \omega (N_{(S,T)}^{\tilde{h}} + N_{(T,S)}^{\tilde{h}}) +\omega^2( N_{(S,V)}^{\tilde{h}} +N_{(V,S)}^{\tilde{h}})+\\&N_{(T,V)}^{\tilde{h}}+ N_{(V,T)}^{\tilde{h}}  = \omega^2 \mu-1,\end{split}\end{equation*}
 \end{enumerate}
\end{enumerate}
where $\mu=|S|-|T|$ and is related to $k$ by equations given in \eqref{relation}.
\end{thm}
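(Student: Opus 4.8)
The plan is to mirror the proof of Theorem \ref{mainthmqs1}, substituting the three-set cube-root counting of Theorem \ref{mainthmcube1} for the two-set counting used there. By Definition \ref{defqsp}, $(S,T)$ is a quasi-signature pair for an $(n,k)$-cube root equiangular frame if and only if $\tilde Q$ is a cube root signature matrix, and by Theorem \ref{sigm} this is equivalent to $\tilde Q = \tilde Q^*$ together with $\tilde Q^2 = (n-1)I + \mu\tilde Q$ for some real $\mu$. First I would handle self-adjointness. Since the border $C$ consists of real $1$'s, $\tilde Q = \tilde Q^*$ is equivalent to $Q = Q^*$, and by exactly the computation in Theorem \ref{mainthmcube1} this holds if and only if $S = S^{-1}$ and $T^{-1} = V$. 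This proves the first condition and, since inversion is a bijection, yields $|T| = |V|$.

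Next I would expand $\tilde Q^2$ in block form. With $\tilde Q = \left[\begin{smallmatrix} 0 & C^t \\ C & Q\end{smallmatrix}\right]$ one obtains
\[\tilde Q^2 = \begin{bmatrix} C^tC & C^tQ \\ QC & CC^t + Q^2\end{bmatrix},\]
where $C^tC = m = n-1$ and $CC^t = J$, the $m\times m$ all-ones matrix. Because each $\lambda(g)$ is a permutation matrix we have $C^t\lambda(g) = C^t$ and $\lambda(g)C = C$, so $C^tQ = (|S| + \omega|T| + \omega^2|V|)C^t$; invoking $|T| = |V|$ and $\omega + \omega^2 = -1$ this reduces to $(|S|-|T|)C^t$, and likewise $QC = (|S|-|T|)C$. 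Comparing with
\[(n-1)I + \mu\tilde Q = \begin{bmatrix} n-1 & \mu C^t \\ \mu C & (n-1)I_m + \mu Q\end{bmatrix},\]
the scalar block agrees automatically and the off-diagonal blocks agree precisely when $\mu = |S|-|T|$, while the lower-right block demands $J + Q^2 = (n-1)I_m + \mu Q$, that is $Q^2 = (n-1)I_m + \mu Q - J$.

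The final step is to convert this matrix identity into the counting conditions. Since $J = \sum_{g\in G}\lambda(g) = I_m + \sum_{g\in S}\lambda(g) + \sum_{g\in T}\lambda(g) + \sum_{g\in V}\lambda(g)$, the identity becomes
\[Q^2 = (n-2)I_m + (\mu-1)\sum_{g\in S}\lambda(g) + (\omega\mu - 1)\sum_{g\in T}\lambda(g) + (\omega^2\mu - 1)\sum_{g\in V}\lambda(g).\]
Writing $Q = \sum_{g} c_g\lambda(g)$ with $c_g$ equal to $1$, $\omega$, $\omega^2$ according as $g\in S$, $T$, $V$, I would expand $Q^2 = \sum_{g_1,g_2} c_{g_1}c_{g_2}\lambda(g_1g_2)$ and read off the coefficient of $\lambda(x)$ for each $x$. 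For $x = e$ this coefficient is $|S|+|T|+|V| = n-2$ (using $S=S^{-1}$, $T^{-1}=V$ and $\omega^3 = 1$), so the $(n-2)I_m$ term matches automatically; for $x \neq e$ the coefficient is exactly the nine-term expression displayed on the left of (a), (b), (c), and equating it to $\mu-1$, $\omega\mu-1$, $\omega^2\mu-1$ for $x\in S$, $x\in T$, $x\in V$ respectively gives the three stated conditions. Reading these equivalences forwards and backwards establishes both implications.

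I expect the only genuinely delicate point to be the off-diagonal block computation: the reduction $C^tQ = \mu C^t$ with a \emph{real} scalar $\mu = |S|-|T|$ relies on $|T| = |V|$, which is why the self-adjointness step must come first. The one new feature relative to Theorem \ref{mainthmcube1} is the $-J$ coming from the border $CC^t$, which lowers the identity coefficient from $n-1$ to $n-2$ and decrements each of the three $\omega$-weighted coefficients by $1$; everything else is the same bookkeeping of the nine cross-terms $N_{(\cdot,\cdot)}^x$ already carried out there.
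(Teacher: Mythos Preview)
Your proposal is correct and follows essentially the same route as the paper's proof: reduce to the block decomposition of $\tilde Q^2$, identify $\mu=|S|-|T|$ from the border blocks, rewrite $Q^2=(n-1)I+\mu Q-J$ using $J=\sum_{g\in G}\lambda(g)$, and then read off the nine-term counting identities exactly as in Theorem~\ref{mainthmcube1}. If anything you are slightly more explicit than the paper in flagging that the reality of $\mu$ in the border block depends on $|T|=|V|$ (hence on the self-adjointness step already done) and in checking the $x=e$ coefficient separately.
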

\begin{proof}
 Form $Q= \sum_{g\in S}\lambda(g)+\omega \sum_{h\in T}\lambda(h)+ \omega^2 \sum_{\tilde h\in V}\lambda(\tilde h)$ and \[\tilde Q = \left[\begin{array}{c |c}
0&C^t\\
\hline
C&Q\\
\end{array}\right]\] where \[C = \left(\begin{array}{c}1\\\vdots \\1\end{array}\right)\in \bb C^m\]  Then by Definition \ref{defqsp}, $(S,T)$ will form a quasi-signature pair of sets for an $(n,k)$-cube root equiangular frame if and only if $Q$ forms a signature matrix for an $(n,k)$-equiangular frame. From Theorem \ref{sigm}, $\tilde Q$ will form a signature matrix for an $(n,k)$ equiangular frame if and only if it satisfies the following two conditions:
\begin{enumerate}
\item[(a)] $\tilde Q$ is self adjoint that is $\tilde Q = \tilde Q^*$; and 
\item[(b)] $\tilde Q^2 = (n-1)I + \mu \tilde Q$ for some real number $\mu$.
\end{enumerate}
The condition $\tilde Q = \tilde Q^*$ is equivalent to $Q=Q^*$ which is equivalent to saying that $g\in S$ implies $g^{-1}\in S$ and $h\in T$ implies $h^{-1} \in V$.\\
For the second condition we need $\tilde Q^2=(n-1)I +\mu \tilde Q $. We have  
 \[\tilde Q^2 = \left[\begin{array}{cc}
n-1& \tilde C^t  \\
\tilde C & J +  Q^2  
\end{array}\right]\] where $\tilde C=\alpha C$, $\alpha =|S|+\omega|T|+\omega^2|V|= |S|-|T|$.
Thus $\tilde Q^2= (n-1)I +\mu \tilde Q$ if and only if 
\begin{enumerate}
\item[(a)] $\alpha = |S|-|T|=\mu$; and 
\item[(b)] $J +  Q^2 = (n-1)I + \mu Q $ that is $Q^2 = (n-1)I + \mu Q - J$. Since $J = \sum_{g\in G}\lambda (g)$, we have \begin{align*}  Q^2 & = (n-1)I + \mu Q -J\\
& = (n-2)I + \mu(\sum_{g\in S}\lambda(g)+\omega \sum_{h\in T}\lambda(h)+\omega^2 \sum_{\tilde{h}\in T}\lambda(\tilde{h})) - \sum_{g\in G\setminus\{e\}}\lambda (g) \\
& = (n-2)I + (\mu-1)\sum_{g\in S}\lambda(g)+ (\omega \mu-1)\sum_{h\in T}\lambda(h)+(\omega^2 \mu-1)\sum_{\tilde{h}\in T}\lambda(\tilde{h})
.\end{align*}
\end{enumerate}
By the same counting arguments as before we have that $\tilde Q^2= (n-1)I +\mu \tilde Q$ if and only if for all $g\in S$,  \begin{equation*}\begin{split}& N_{(S,S)}^g + \omega^2 N_{(T,T)}^g + \omega N_{(V,V)}^g +\omega (N_{(S,T)}^g + N_{(T,S)}^g) +\omega^2( N_{(S,V)}^g +N_{(V,S)}^g)+\\ & N_{(T,V)}^g+ N_{(V,T)}^g = \mu -1,\end{split}\end{equation*}
 for all $h\in T$, \begin{equation*}\begin{split}&N_{(S,S)}^h + \omega^2 N_{(T,T)}^h + \omega N_{(V,V)}^h +\omega (N_{(S,T)}^h + N_{(T,S)}^h) +\omega^2( N_{(S,V)}^h +N_{(V,S)}^h)+ \\& N_{(T,V)}^h+ N_{(V,T)}^h  = \omega \mu-1,\end{split}\end{equation*}
for all $\tilde h\in V$, \begin{equation*}\begin{split}& N_{(S,S)}^{\tilde h} + \omega^2 N_{(T,T)}^{\tilde h} + \omega N_{(V,V)}^{\tilde h}+ \omega (N_{(S,T)}^{\tilde{h}} + N_{(T,S)}^{\tilde{h}}) +\omega^2( N_{(S,V)}^{\tilde{h}} +N_{(V,S)}^{\tilde{h}})+\\&N_{(T,V)}^{\tilde{h}}+ N_{(V,T)}^{\tilde{h}}  = \omega^2 \mu-1\end{split}\end{equation*} where $\mu = |S|-|T|$.
\end{proof}
\vspace{0.02in}
\begin{rmk}
 From Theorem \ref{mainthmqsp1}, note that if $S,T\subset G$ is a quasi-signature pair of sets for an $(n,k(\mu))$-cube root equiangular frame then $|S|-|T|=\mu$.
\end{rmk}
\vspace{0.02in}
\begin{prop}\label{cardqsp}Let $G$ be a group of order $m$ and $S,T,V\subset G\setminus\{e\}$ be pairwise disjoint such that $G\setminus\{e\} = S\cup T\cup V$ where $S=S^{-1}$ and $V= T^{-1}$. Then the condition $|S|-|T|=\mu$ for some integer $\mu$ is equivalent to \begin{eqnarray}|S| = \frac{n+2\mu -2}{3} & and & |T| = \frac{n-2-\mu}{3} \end{eqnarray} where $n=m+1$.
\end{prop}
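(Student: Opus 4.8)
The plan is to reduce the claimed equivalence to solving a $2\times 2$ linear system in the cardinalities $|S|$ and $|T|$. The two governing equations arise from two elementary observations, which I would establish first before treating either implication.

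First I would note that since $V = T^{-1}$ and the inversion map $g \mapsto g^{-1}$ is a bijection of $G$, it carries $T$ bijectively onto $T^{-1} = V$, and hence $|V| = |T|$. Second, since $S$, $T$, $V$ are pairwise disjoint with $G\setminus\{e\} = S \cup T \cup V$, and $|G\setminus\{e\}| = m - 1 = n - 2$, a count of the partition gives $|S| + |T| + |V| = n-2$. Substituting $|V| = |T|$ into this relation yields the single identity $|S| + 2|T| = n-2$, which is the crux of the whole argument.

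With this identity in hand I would prove both directions. For the forward direction, I would assume $|S| - |T| = \mu$; together with $|S| + 2|T| = n-2$ this is a linear system, and subtracting to eliminate $|S|$ gives $3|T| = n-2-\mu$, hence $|T| = \frac{n-2-\mu}{3}$, and back-substitution gives $|S| = \mu + |T| = \frac{n+2\mu-2}{3}$. For the converse, I would assume the two displayed formulas and simply subtract them, obtaining $|S| - |T| = \frac{(n+2\mu-2)-(n-2-\mu)}{3} = \mu$, as required.

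There is no genuine obstacle here: the entire content is the bookkeeping identity $|V| = |T|$ forced by the hypothesis $V = T^{-1}$, after which the statement is pure integer linear algebra. The only point meriting a word of care is that the formulas implicitly require $n - 2 - \mu$ and $n + 2\mu - 2$ to be divisible by $3$ in order that $|S|$ and $|T|$ be bona fide nonnegative integers; but this divisibility is automatic whenever sets $S$, $T$, $V$ with the stated properties actually exist, and so it is not part of what the stated equivalence asks us to verify.
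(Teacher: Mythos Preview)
Your proof is correct and follows essentially the same approach as the paper: derive the identity $|S|+2|T|=n-2$ from the partition and the fact that $|V|=|T|$, then solve the resulting $2\times 2$ linear system for one direction and subtract the given formulas for the other. Your write-up is in fact slightly more explicit than the paper's in justifying $|V|=|T|$ via the inversion bijection.
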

\begin{proof}Let $\mu$ be an integer. Since $G\setminus\{e\} = S\cup T\cup V$ where $S,T,V$ are pairwise disjoint with $S=S^{-1}$ and $V= T^{-1}$, we have $|S|+ 2|T|= n-2$. If $|S|-|T|=\mu$, then solving these equations for $|S|$ and $|T|$, we get $|S| = \frac{n+2\mu -2}{3}$ and $|T| = \frac{n-2-\mu}{3}$.\\
Conversely if $|S| = \frac{n+2\mu -2}{3}$ and $|T| = \frac{n-2-\mu}{3}$, then $|S|-|T|=\mu$.
\end{proof}
\vspace{0.02in}
We know from \cite{cuberoots} that there exists a $(9,6)$-cube root equiangular frame. In the following example we will show that certain subsets of the group of quaternions forms a quasi-signature pair for a $(9,6)$-cube root equiangular frame.
\begin{eg}Let $G =\{1, -1, i, -i, j, -j, k, -k\}$ be the group of quaternions where $i^2=-1$, $j^2=-1$, $k^2=-1$, $i\cdotp j=k$, $j\cdotp k=i$, $k\cdotp i=j$, $j\cdotp i = -k$, $k\cdotp j=-i$ and $i\cdotp k=-j$. Using Proposition \ref{cardqsp}, let us take $S$, $T$, $V$ as follows: \begin{eqnarray*}S = \{-1\},  & T = \{i,j,k\}, & V = \{-i,-j,-k\}.\end{eqnarray*} Then all the conditions of the Theorem \ref{mainthmqsp1} are satisfied and we get $\mu = -2$. Using \ref{relation}, we get $k=6$. Hence $(S,T)$ forms a quasi-signature pair for $(9,6)$-cube root equiangular frame. The signature matrix for a $(9,6)$-cube root equiangular frame is given below in the standard form (here $n=|G|+1 =9$): 
\[\tilde{Q}=\begin{bmatrix}
0&1&1 & 1& 1&1&1&1&1\\
1& 0 & 1 & \omega & \omega^2 & \omega & \omega^2 & \omega & \omega^2\\
1& 1 & 0 & \omega^2 & \omega & \omega^2 & \omega & \omega^2 & \omega\\
1&\omega^2 & \omega & 0 & 1 & \omega^2 & \omega & \omega & \omega^2\\
1& \omega & \omega^2 & 1 & 0 & \omega & \omega^2 & \omega^2 & \omega\\ 
1&\omega^2 & \omega & \omega & \omega^2 & 0 & 1 & \omega^2 & \omega\\
1&\omega & \omega^2 & \omega^2 & \omega & 1 & 0 &  \omega & \omega^2\\
1&\omega^2 & \omega & \omega^2 & \omega & \omega & \omega^2 & 0 & 1 \\
 1&\omega & \omega^2 & \omega & \omega^2& \omega^2 & \omega & 1 & 0 \\
\end{bmatrix}\]
\end{eg}

$ $

$ $

In this paper we have seen an alternative way to generate equiangular frames by taking subsets of groups having certain properties. We are able to show that a lot of real equiangular frames are associated with signature sets and quasi-signature sets. In the case of cube root equiangular frames, so far we have $(9,6)$-cube root equiangular frame and we have shown that it naturally arises from the group of quaternions. The possible $(n,k)$ values for cube root signature matrices listed in \cite{cuberoots} such as $(33,11)$ and $(36,21)$ are still open and could be used for future work. However, after a preliminary check, (45,12) appears to be an equiangular frame, the entries of whose signature matrix are fifth roots of unity.

$ $

{\bf Acknowledgements:} The author would like to thank her Ph.D. advisor, Dr. Vern I. Paulsen for his continuous guidance, support and suggestions. The author would also like to thank Dr. Bernhard Bodmann and Dr. John Hardy for their valuable feedback and discussions.

\end{document}